\documentclass[11pt]{article}
\usepackage[margin=1in]{geometry}
\usepackage{amsmath,amssymb,amsthm}
\usepackage{mathtools}
\usepackage{array}
\usepackage{enumitem}
\usepackage{hyperref}

\renewcommand{\arraystretch}{0.9}
\makeatletter
\renewcommand*{\env@matrix}[1][\arraystretch]{%
  \edef\arraystretch{#1}%
  \hskip -\arraycolsep
  \let\@ifnextchar\new@ifnextchar
  \array{*\c@MaxMatrixCols c}}
\makeatother

\newcommand{\sbmat}[1]{\left[\begin{smallmatrix}#1\end{smallmatrix}\right]}

\theoremstyle{plain}
\newtheorem{theorem}{THEOREM}[section]
\newtheorem{lemma}[theorem]{LEMMA}

\title{Linear Algebra and Algebraic Geometry: A Matrix Construction for Classifying Families of Algebraic Curves with No Solutions in $\mathbb{Z}^+$}
\author{Shazali Abdalla Fadul \\ \small Faculty Of Mathematics Sciences \& Statistics, AL-neelain University, Khartoum, Sudan \\ \small e-mail address: shazlyabdullah3@gmail.com}
\date{}

\begin{document}
\maketitle

\begin{abstract}
A substantial body of results is known for curves of degree $d=3$, namely elliptic curves, including the theorems of Siegel, Mazur, and Mordell, among others. More generally, Faltings established comprehensive results for all algebraic curves of degree $d\ge 2$. However, these results do not, in general, furnish an explicit or systematic procedure for classifying algebraic curves that fail to admit solutions over a prescribed set such as $\mathbb{Z}^+$. The principal aim of the present work is accordingly to develop a method for classifying families of algebraic curves having no solutions in $\mathbb{Z}^+$. We begin by establishing a correspondence between algebraic geometry and linear algebra, from which we develop a classification procedure grounded in linear-algebraic constructions and tools.

Specifically, let $S: Ax = b$ be a linear system, where $A \in M_{3k \times m}(\mathbb{Z})$ satisfies suitable conditions on its entries, and let $S = \{s_1, s_2, \dots, s_N\} \subset \mathbb{Z}^m$ denote the solution set of the system, with $A = (a_{ij})$, and $s_i = (c_1, c_2, \dots, c_{n+1})$. From such a solution we construct algebraic curves of degree $n \le m$, nonsingular curves $C_{j s_i}$ of genus $g \ge 0$, of the form
\[
C_{j s_i} : Y^2 = a_{3j1} c_1 X^n + a_{3j2} c_2 X^{n-1} + \cdots + a_{3jn} c_n .
\]
If $y \ge 1$, $x > 1$ and $\forall (x,y) \in C_{j s_i}(\mathbb{Z}^+)$ holds then
\[
\Big[ C_{j s_i}(\mathbb{Z}^+) \Big]^{N}_{\substack{i=1\\ 1\le j \le k}} = \emptyset ,
\]
 if the system $S$ admits infinitely many solutions, then
\[
\Big[ C_{j s_i}(\mathbb{Z}^+) \Big]^{\infty}_{\substack{i=1\\ 1\le j \le k}} = \emptyset \quad \text{if } N \to \infty .
\]

We further extend this correspondence to several systems considered simultaneously. In particular, we consider three systems $S_j : A_j x = b_j$, $j=1,2,3$, representing linear, quadratic, and cubic conditions, respectively, with integer matrices. When these systems admit a common solution set $S_1 \cap S_2 \cap S_3 = \{s_1, s_2, \dots, s_N\} \subset \mathbb{Z}^m$ and $s_i = (x_1, x_2, \dots, x_m)$, we construct an algebraic curve of degree $\deg C_{s_i} = \max(x_1, x_2, \dots, x_m)$ and whose equation has the form
\[
C_{s_i} : DY^2 = a_1 X^{x_1} + a_2 X^{x_2} + \cdots + a_m X^{x_m}.
\]
If $x > 1$, $y \ge 1$ and $\forall (x,y) \in C_{s_i}(\mathbb{Z}^+)$ hold, then $\{C_{s_i}(\mathbb{Z}^+)\}^N_{i=1} = \emptyset$.

Several equivalent formulations of the conditions imposed on these systems are also given. The method is then generalized to common solutions of three algebraic curves defined by multivariable equations $C_1, G_2, F_3 \in \mathbb{Z}(x_1,\dots,x_n)$. From a common solution of these three curves we construct a fourth algebraic curve in two variables $C_P \in \mathbb{Z}(x,y)$, having no solution in $\mathbb{Z}^+$. Finally, we study intersections of algebraic curves. We prove that, given a solution of a linear equation of the form
\[
M y^2 = -b_0 x_0 + b_1 x_1 - b_2 x_2 - \cdots - b_{2m-1} x_{2m-1} - b_{2m} x_{2m}
\]
by $S_M = \{P_1, P_2, \dots, P_N\} \in \mathbb{Z}^{m+2}$, one can construct three algebraic curves, denoted by $C_{P_i}, G_{P_i}, F_{P_i} \in \mathbb{Z}(x,y)$, whose common intersection over $\mathbb{Z}^+$ is empty. Then
\[
\Big\{ C_{P_i}(\mathbb{Z}^+) \cap G_{P_i}(\mathbb{Z}^+) \cap F_{P_i}(\mathbb{Z}^+) \Big\}^{\infty}_{i=1} = \emptyset \quad \text{if } N \Rightarrow \infty .
\]
The forms and degrees of the three curves thus constructed are described explicitly.

Keywords \textbf{:} classifying algebraic curves, linear system, algebraic curves, intersections.
\end{abstract}

\section{Introduction}

Algebraic Geometry and Arithmetic Geometry are among the most important subjects in modern mathematics. They are among the most active branches due to their connections with several fields of mathematics, such as Algebraic Topology, Complex Analysis, and Algebraic Number Theory. In general, Algebraic Geometry studies the zeros or roots of polynomials in several variables, $f(x_1,x_2,x_3,\dots,x_n) \in \mathbb{K}(x_1,x_2,x_3,\dots,x_n)$. The roots lie in algebraically closed fields, such as the field of complex numbers $\mathbb{C}$, or in non-closed fields, such as the field of rational numbers $\mathbb{Q}$ or a finite field $F_p$. Arithmetic Geometry, on the other hand, is a subfield of Algebraic Geometry that uses techniques from Algebraic Geometry to study solutions to Diophantine equations over the sets of rational numbers $\mathbb{Q}$ and integers $\mathbb{Z}$. Mathematicians call the polynomials and quadratic forms ``Diophantine Equations'' if they are solvable over $\mathbb{Z}$, $\mathbb{Q}$, named after the Alexandrian Diophantus who studied polynomial solutions over $\mathbb{Q}$. Pierre de Fermat, the French mathematician, studied Diophantus' work and proved many results, developing proof methods such as the method of infinite descent. Others, such as Euler, Gauss, and Riemann, also studied Diophantine equations, and in the 20th century these equations were studied in an advanced manner using modern techniques developed within Algebraic Geometry. Diophantine equations, algebraic curves, and surfaces were studied by Henri Poincar\'e, Andrew Wiles \cite{ref11}, Louis Mordell \cite{ref12}, C.L. Siegel \cite{ref6}, G. Faltings \cite{ref13}, and others.

Elliptic curves are algebraic curves of degree three in two variables given by $y^2 = f(x)$ where $f(x)$ is a cubic polynomial. Fermat studied the solutions of such curves over $\mathbb{Q}$ and observed that the solutions of these curves have algebraic properties. He proved that if $P_1$ is a point on the curve and there exists another point $P_2$, one can produce a third point $P_1+P_2=R$. This is called the point addition law on curves. This method is called the ``chord-and-tangent method'' for generating solutions. Fermat was the first to use this method in studying solutions of elliptic curves. For more, see \cite{silverman1}. Euler also studied some Diophantine equations. For example, it was proven that the equation $y^2 = x^3+1$ has no solutions except for one, namely $y=\pm 3$ and $x=2$. The finiteness of integral solutions on elliptic curves was studied by Siegel \cite{ref6}. In 1929, Siegel proved a strong result on integral solutions: Let $E$ be an elliptic curve given by the equation $E: y^2 = x^3+ax+b$, $a,b\in\mathbb{Z}$. Then $E$ has only finitely many points $P \in E(\mathbb{Z})$. In other words, the set of integral coordinates $(x,y)$ on $E(\mathbb{Z})$ is finite \cite{ref5b}.

Also, Nagell and Elisabeth Lutz \cite{ref7} studied torsion points on elliptic curves and proved the Nagell--Lutz Theorem: If $E$ is a nonsingular cubic curve $E: y^2 = x^3+Ax^2+Bx+C$ with integer coefficients $A,B,C \in \mathbb{Z}$, and let $D$ be the discriminant of the cubic polynomial, then if $P=(x,y)$ is a rational point of finite order for the group law on the elliptic curve, then $(x,y) \in E(\mathbb{Z})$, or $y=0$ in which case $P$ is a point of order 2, or $y \mid D$, $y^2 \mid D$ \cite{ref5,ref6}. Moreover, Mazur \cite{ref5b} studied torsion points and answered questions posed since 1906--1911. In those years, Beppo Levi published a series of papers investigating the possible finite orders of points on elliptic curves over $\mathbb{Q}$. He showed that there are infinitely many elliptic curves over $\mathbb{Q}$ that have torsion points with a finite group. Mazur proved the Torsion Theorem for elliptic curves:

\begin{theorem}[Mazur, 1977]
The torsion subgroup $E(\mathbb{Q})_{tors}$ of the group of rational points $E(\mathbb{Q})$ on an elliptic curve must be isomorphic to one of the following 15 groups:
\[
C_N \text{ with } 1 \le N \le 10 \text{ or } N=12,
\]
\[
C_2 \times C_{2N} \text{ with } 1 \le N \le 4,
\]
in particular, $|E(\mathbb{Q})_{tors}| \le 16$.
\end{theorem}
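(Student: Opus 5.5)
This theorem is quoted from the literature, and the paper uses it without proof; it is Mazur's theorem and far beyond the linear-algebraic tools developed here. If I had to prove it, I would follow the standard route through modular curves. The plan has two parts: show that every group on the list occurs, and show that no other group occurs. Both parts use the fact that the moduli space of elliptic curves with a point of exact order $N$ (respectively with $\mathbb{Z}/2\times\mathbb{Z}/2N$ level structure) is the modular curve $Y_1(N)$ (respectively $Y_1(2,2N)$). Its compactification is $X_1(N)$, and rational torsion of the given shape corresponds exactly to non-cuspidal points in $X_1(N)(\mathbb{Q})$.

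\textbf{Existence.} For the fifteen listed groups the relevant modular curves have genus $0$ and a rational cusp, so they are isomorphic to $\mathbb{P}^1$ over $\mathbb{Q}$. Concretely, I would use the Tate normal form
\[
E_{b,c}: Y^2+(1-c)XY-bY=X^3-bX^2 ,
\]
in which $(0,0)$ is a point of order at least $4$. Imposing the condition that $(0,0)$ has order $N$ gives a rational parametrization of $b,c$ for each $N\le 10$ and $N=12$, and similarly for the $C_2\times C_{2N}$ cases. Each parametrization yields infinitely many curves realizing the group. This part is routine computation.

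\textbf{Prime orders.} The core of nonexistence is showing that no elliptic curve over $\mathbb{Q}$ has a rational point of prime order $p\ge 11$. I would pass to $X_0(p)$ and its Jacobian $J_0(p)$, and use Mazur's Eisenstein ideal $\mathfrak{I}\subset\mathbb{T}$ in the Hecke algebra to form the Eisenstein quotient $\tilde J$. Three steps follow.
\begin{enumerate}
\item Show $\tilde J(\mathbb{Q})$ is finite. This comes from a descent with respect to $\mathfrak{I}$, using that the cuspidal subgroup and the Shimura subgroup control $J_0(p)[\mathfrak{I}]$.
\item Compose $X_0(p)\to J_0(p)\to\tilde J$, sending the cusp $\infty$ to $0$, and prove this map is a formal immersion at $\infty$ in characteristic $\ell$ for a suitable small prime $\ell$.
\item Suppose a rational point $x$ of order $p$ exists. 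Then $x$ gives a point of $X_0(p)$ that reduces to the cusp modulo $\ell$, after using potentially good reduction and the specialization of torsion. Its image in the finite group $\tilde J(\mathbb{Q})$ lies in the kernel of reduction, which forces the image to be $0$. The formal immersion property then forces $x=\infty$, a contradiction.
\end{enumerate}

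\textbf{Composite orders.} The prime case leaves only finitely many composite $N$ to exclude, namely $N=14,15,16,18,20,21,24,25,27,35,49$ together with the relevant $C_2\times C_{2N}$ cases for $N\ge 5$. For each one I would determine $X_1(N)(\mathbb{Q})$ or $X_0(N)(\mathbb{Q})$ directly. In the many cases where the modular curve is elliptic of rank $0$, I would compute its finite group of rational points and check that every such point is a cusp. For the remaining cases I would apply the same Eisenstein-quotient argument at composite level, as Kubert and Kenku did. The bound $|E(\mathbb{Q})_{tors}|\le 16$ then follows by reading off the list.

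\textbf{Main obstacle.} I expect the prime case to be the hardest step, specifically proving that $\tilde J(\mathbb{Q})$ is finite (in effect, rank zero for the Eisenstein quotient). This requires Mazur's full analysis of the Eisenstein ideal and of the Néron model of $J_0(p)$ at $p$, and I would simply invoke it.
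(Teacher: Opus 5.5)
The paper gives no proof of this statement; it quotes Mazur's theorem as background. So the question is only whether your sketch of Mazur's argument holds up. The architecture is the standard one. Your list of minimal composite levels $14,15,16,18,20,21,24,25,27,35,49$, together with $C_2\times C_{10}$ and $C_2\times C_{12}$, is correct.

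\textbf{The gap is the prime step at $p=13$.} Your argument runs entirely on $X_0(p)$ and the Eisenstein quotient of $J_0(p)$, but $X_0(13)$ has genus $0$.
\begin{itemize}
\item Then $J_0(13)=0$, so $\tilde J=0$. The map $X_0(13)\to\tilde J$ is constant and cannot be a formal immersion.
\item Worse, $X_0(13)(\mathbb{Q})\cong\mathbb{P}^1(\mathbb{Q})$ is infinite: infinitely many $j$-invariants admit a rational $13$-isogeny. So no argument that only sees the point on $X_0(13)$ can exclude a rational point of order $13$.
\end{itemize}
Mazur's Eisenstein-ideal theorem covers exactly the primes for which $X_0(p)$ has positive genus, i.e.\ $p=11$ and $p\ge 17$. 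Your composite step only involves the primes $2,3,5,7$, so order $13$ is never ruled out in your plan.

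\textbf{The missing step} is a separate treatment on $X_1(13)$, which has genus $2$. Mazur and Tate (1973) showed by a descent that $J_1(13)(\mathbb{Q})$ is finite: it is cyclic of order $19$, generated by cuspidal divisor classes. They deduced that $X_1(13)(\mathbb{Q})$ consists only of the six rational cusps.

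\textbf{Two smaller points in your step 3} also need care.
\begin{itemize}
\item Take $\ell$ odd and different from $p$, e.g.\ $\ell=3$, so that reduction is injective on the finite group $\tilde J(\mathbb{Q})$. This choice still works for your reduction argument: $|\tilde E(\mathbb{F}_3)|\le 7<11$ rules out good reduction, and additive reduction allows prime-to-$3$ torsion of order at most $4$.
\item Check that $x$ reduces to the cusp at which the formal immersion is proved. This holds because the rational point of order $p$ cannot lie in the $\mu_p$ of the Tate curve; apply $w_p$ if your conventions put $x$ at the other cusp.
\end{itemize}

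\textbf{Attribution of the composite cases.} An Eisenstein quotient is not needed at the composite levels. In the standard proof, Ogg and Kubert disposed of them by explicit analysis of low-genus modular curves with finite Mordell--Weil groups.
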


Henri Poincar\'e studied rational points using algebraic methods. He investigated the algebraic structure of the rational points on elliptic curves. In 1901 he posed a question about the algebraic structure of the set of solutions on algebraic curves. This question was answered by Louis Mordell in 1922, who proved Mordell's Theorem, a fundamental result in Diophantine Geometry:

\begin{theorem}[Mordell, 1922, \cite{ref12}]
Let $E$ be an elliptic curve given by an equation $E: y^2=x^3+ax+b$, $a,b \in \mathbb{Q}$. Then the group of rational points $E(\mathbb{Q})$ is a finitely generated abelian group. In other words, there exists a finite set of points $P_1,P_2,\dots,P_n \in E(\mathbb{Q})$ such that every point $P \in E(\mathbb{Q})$ can be written in the form
\[
P = n_1 P_1 + n_2 P_2 + n_3 P_3 + \cdots + n_t P_t
\]
for some $n_1,n_2,n_3,\dots,n_t \in \mathbb{Z}$.
\end{theorem}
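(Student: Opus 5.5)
The plan is the classical descent argument for the Mordell--Weil theorem in the case $K=\mathbb{Q}$. It has two independent ingredients:
\begin{itemize}
\item[(i)] the \emph{weak Mordell theorem}: $E(\mathbb{Q})/2E(\mathbb{Q})$ is finite;
\item[(ii)] a \emph{height function} $h$ on $E(\mathbb{Q})$ with the usual finiteness and quasi-quadraticity properties.
\end{itemize}
A short abstract descent lemma then combines them. First I would replace $a,b\in\mathbb{Q}$ by integers via the change of variables $(x,y)\mapsto(u^2x,u^3y)$. For the curve to be an elliptic curve we need the discriminant $4a^3+27b^2\neq 0$.

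For (ii), write a point as $P=(m/e^2,\,n/e^3)$ in lowest terms, as the Nagell--Lutz style denominator analysis permits. Set $H(P)=\max(|m|,e^2)$, $h(P)=\log H(P)$, and $h(O)=0$. I would prove three facts.
\begin{itemize}
\item[(a)] For every $C$, the set $\{P: h(P)\le C\}$ is finite. This is immediate, since there are finitely many such $m,e$.
\item[(b)] For fixed $P_0$ there is a constant $c_1$ with $h(P+P_0)\le 2h(P)+c_1$. This follows from the explicit chord formula for $x(P+P_0)$, together with the bound $|n|\ll H(P)^{3/2}$ obtained from the curve equation.
\item[(c)] There is a constant $c_2$ with $h(2P)\ge 4h(P)-c_2$.
\end{itemize}
For (c), $x(2P)=\phi(x)/\psi(x)$ with $\phi$ of degree $4$ and $\psi=4(x^3+ax+b)$ of degree $3$. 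Since $\phi$ and $\psi$ are coprime, there exist polynomials $F,G$ with $F\phi+G\psi$ equal to a nonzero constant multiple of a power of $x$, and similar identities hold after homogenizing. These show that the homogenized values $\Phi(m,e^2)$ and $\Psi(m,e^2)$ share only bounded common factors. Hence $H(2P)\gg H(P)^4$.

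For (i), I would avoid the $2$-torsion case split by working over the splitting field $L$ of $x^3+ax+b$. Define the Kummer map $E(\mathbb{Q})\to \prod_i L_i^\times/(L_i^\times)^2$ by $P\mapsto (x-e_i)$, with the standard modification at $2$-torsion points. I would check the following.
\begin{itemize}
\item The map is a homomorphism, because $x(P_1)-e_i$, $x(P_2)-e_i$ and $x(P_1+P_2)-e_i$ multiply to a square when $P_1,P_2,-(P_1+P_2)$ are collinear.
\item Its kernel is exactly $2E(\mathbb{Q})$. This needs an explicit halving construction: if every $x-e_i$ is a square, one solves for $Q$ with $2Q=P$ using the square roots.
\item Its image lies in the subgroup unramified outside $2\Delta\infty$. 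This comes from valuation analysis of $x-e_i$ at primes not dividing $2\Delta$: the differences $e_i-e_j$ are units there, so the valuations $v(x-e_i)$ are either all equal and negative and even, or all but one vanish and the remaining one is forced even by $y^2=\prod(x-e_i)$.
\end{itemize}
Finiteness of the class group and finite generation of the $S$-units of a number field (Dirichlet) then show this subgroup is finite.

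Finally, the descent. Choose coset representatives $Q_1,\dots,Q_r$ for $E(\mathbb{Q})/2E(\mathbb{Q})$. Given any $P$, write $P=Q_{i_1}+2P_1$, then $P_1=Q_{i_2}+2P_2$, and so on. Using (b) with $-Q_i$ and then (c),
\[
4h(P_j)\le h(2P_j)+c_2\le 2h(P_{j-1})+c'+c_2 ,
\]
so $h(P_j)\le \tfrac12 h(P_{j-1})+c''$. Iterating, after finitely many steps $h(P_j)\le 2c''+1$. Thus $P$ is an integer combination of the $Q_i$ and the finitely many points of small height given by (a). That finite set is the required $P_1,\dots,P_t$.

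The main obstacle is step (i), specifically the kernel computation and the unramifiedness argument. It requires honest algebraic number theory (finiteness of the class group and Dirichlet's unit theorem), which nothing earlier in the paper supplies. I would try to reduce the dependence as follows. If $E$ has a rational $2$-torsion point, the isogeny-descent version with $\mathbb{Q}^\times/(\mathbb{Q}^\times)^2$ only needs the fact that there are finitely many squarefree divisors of $b$, as in Silverman--Tate. In general, I would quote the number-field finiteness results as black boxes.
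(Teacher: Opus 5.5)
The paper does not prove this statement. Mordell's theorem appears only in the introduction as quoted background with a citation, and none of the paper's own machinery (the congruence and sum-of-two-squares obstruction behind Theorems~2.3 and~3.2) bears on it. So there is no argument in the paper to compare against.

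What you give is the standard proof, and it is sound: weak Mordell by $2$-descent, the naive height with $h(P+P_0)\le 2h(P)+c_1$ and $h(2P)\ge 4h(P)-c_2$, and the descent lemma. The bookkeeping in $h(P_j)\le \tfrac12 h(P_{j-1})+c''$ is right, and so is the final generating set: the $Q_i$ together with the finitely many points of height at most $2c''+1$. You are also right that the general case needs finiteness of class groups and Dirichlet's $S$-unit theorem (or, with a rational $2$-torsion point, the elementary isogeny descent), none of which the paper supplies.

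One step is stated too quickly: the claim that the kernel of the Kummer map is exactly $2E(\mathbb{Q})$. Solving for $Q$ with $2Q=P$ from square roots $r_i$ of $x(P)-e_i$ a priori produces only $Q\in E(L)$, so you must check that $Q$ can be taken rational.

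\begin{itemize}
\item If $L_i=\mathbb{Q}(e_i)$, which is the same as working in the \'etale algebra $\mathbb{Q}[T]/(T^3+aT+b)$, this is easy. Choose the $r_i$ Galois-compatibly, i.e.\ $r_i=\rho(e_i)$ for a single $\rho$ in that algebra. Fix the overall sign so that $r_1r_2r_3=y(P)$. The coordinates of $Q$ are then symmetric in the $r_i$ (for instance $x(Q)=x(P)+r_1r_2+r_1r_3+r_2r_3$), hence rational.
\item If instead every factor is the splitting field $L$, you also need that a half of $P$ in $E(L)$ descends to $E(\mathbb{Q})$. This holds because $H^1(\mathrm{Gal}(L/\mathbb{Q}),E[2])=0$; when $\mathrm{Gal}(L/\mathbb{Q})\cong S_3$, restrict to a Sylow $2$-subgroup to see this.
\end{itemize}

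A similar point arises in the ramification check. To verify even valuations at primes of $\mathbb{Q}(e_i)$ you use the differences $e_i-e_j$, which live in $L$. So note that $L/\mathbb{Q}$ is unramified at primes not dividing $\Delta$, and computing valuations in $L$ loses nothing.

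With these points made explicit, your sketch is a complete proof, modulo the number-field finiteness results you quote.
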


In 1983, Gerd Faltings \cite{ref13} proved a powerful theorem on algebraic curves that is comprehensive and depends on genus, covering all algebraic curves. Here we state part of Faltings' Theorem, which answers the question of whether the number of rational solutions to algebraic curves is finite or infinite. Let $C$ be a non-singular algebraic curve of genus $g \ge 2$. Then $C(\mathbb{Q})$, the set of rational points on the curve, is finite. According to Faltings' Theorem, a curve of genus $g>1$ has only finitely many rational points. This means that any curve of degree $d \ge 4$ has only finitely many rational solutions. In other words, the set of rational solutions for curves of degree greater than three is finite. The theorem also includes implications for curves of degree two and three.

Siegel's theorem provides a strong finiteness result by establishing that the integral points on an elliptic curve are finite. Likewise, the Nagell--Lutz theorem gives a precise description of the algebraic properties of torsion points and determines the possible structure of the torsion subgroup. Mazur's torsion theorem further clarifies the possible types and structures of torsion subgroups over the rational numbers, providing a complete classification of the torsion subgroup of an elliptic curve over $\mathbb{Q}$. On the other hand, Mordell's theorem constitutes one of the fundamental results in Diophantine geometry: it shows how the rational solutions of elliptic curves are organized. More precisely, for every elliptic curve over $\mathbb{Q}$, there exists a finite collection of rational points that generates all the remaining rational points. Finally, Faltings' theorem provides a comprehensive finiteness principle for rational points on algebraic curves of degree greater than one and reveals important structural properties of algebraic curves. These results have exposed many of the structural features and fundamental properties of algebraic curves. Nevertheless, the aforementioned results presuppose the existence of solutions on the algebraic curve and subsequently describe the properties of those solutions.

This raises a natural question: what happens when an algebraic curve has no solutions in a prescribed set? How can one prove that a given algebraic curve has no solutions in such a set? The results cited above do not provide a direct answer to this problem. In this respect, they fail to offer a general criterion for proving the nonexistence of solutions. Although several results addressing particular cases are known, they remain restricted to special classes of equations or curves \cite{ref10,ref15,ref9,ref17}. Therefore, the aim of this work is to introduce a method for classifying families of algebraic curves that are insoluble in a prescribed set, namely, the set of positive integers, by means of a general procedure applicable to all algebraic curves of degree greater than or equal to two, $d\ge 2$. Our approach is based on establishing a connection between algebraic geometry and linear algebra, and then reducing more complicated problems in algebraic geometry to comparatively simpler problems in linear algebra through a suitable matrix construction.

The aim of this study is to classify families of algebraic curves that have no solutions; the results in \cite{ref18} have been extended to include all algebraic curves, as well as to classify some algebraic curves whose solutions have no intersection in the set of positive integers. The results are comprehensive for all algebraic curves of degree greater than or equal to two, $d\ge 2$. We presented a classification method based on linear algebra and algebraic geometry to achieve this. We presented three methods for classifying algebraic curves that are not solvable, as well as methods for classifying curves that have no intersection between their solutions.

For example, we first proved: let $S: Ax=b$ be a system of linear equations having solutions in $\mathbb{Z}$, where $A \in M_{3k\times m}(\mathbb{Z})$, with some conditions on the entries of the matrix; for example, every three consecutive entries in any column are equal, $a_{3i-2,j}=a_{3i-1,j}=a_{3i,j}$, together with sign differences and other conditions. If $S=\{s_1,s_2,\dots,s_N\} \subset \mathbb{Z}^{m+1}$ are the solutions of the system, where $s_i=(c_1,c_2,\dots,c_{n+1})$, there exist algebraic curves of degree $n\le m$, nonsingular curves $C_{j s_i}$ of genus $g\ge 0$, where
\[
C_{j s_i}: Y^2 = a_{3j1}c_1 X^n + a_{3j2}c_2 X^{n-1} + \cdots + a_{3jn}c_n.
\]
If $y \ge 1$, $x>1$ and $\forall (x,y) \in C_{j s_1}(\mathbb{Z}^+)$ holds then
\[
\Big[C_{j s_i}(\mathbb{Z}^+)\Big]^{N}_{\substack{i=1\\ 1\le j\le k}} = \emptyset.
\]
If the linear system has infinitely many suitable solutions, the same construction yields infinitely many algebraic curves with no positive integral solutions,
\[
\Big[C_{j s_i}(\mathbb{Z}^+)\Big]^{\infty}_{\substack{i=1\\ 1\le j\le k}} = \emptyset \quad \text{if } N \to \infty.
\]

The second approach extends the established relationship between algebraic curves and linear equations to quadratic and cubic forms. Let there be three systems: a linear system $S: Ax=b$, where $A \in M_{2\times m}(\mathbb{Z})$; a quadratic system $Q: Bx=d$ where $B \in M_m(\mathbb{Z})$ consisting of $m$ terms, for example, in the form $Q: 2k = a_1 x_1^2 + a_2 x_2^2 + \cdots + a_m x_m^2$; and a cubic system $V: Dx=q$ where $D \in M_m(\mathbb{Z})$, for example, in the form $V: 4s-1 = a_1 x_1^3 + a_2 x_2^3 + \cdots + a_m x_m^3$. Suppose that $S_S, S_Q, S_V$ are the solution sets of the three systems, respectively. If a common solution $S_S \cap S_Q \cap S_V = \{s_1,s_2,\dots,s_N\} \subset \mathbb{Z}^m$ exists and $s_i=(x_1,x_2,\dots,x_m)$, then one can construct a curve, nonsingular curve $C_{s_i}$, of genus $g\ge 0$, where
\[
C_{s_i}: D Y^2 = a_1 X^{x_1} + a_2 X^{x_2} + \cdots + a_m X^{x_m}.
\]
If $x>1$, $y\ge 1$ and $\forall (x,y) \in C_{s_i}(\mathbb{Z}^+)$ holds then $\{C_{s_i}(\mathbb{Z}^+)\}^{N}_{i=1} = \emptyset$, and $\deg C_{s_i} = \max(x_1,x_2,\dots,x_m)$.

The third approach generalizes these ideas to the relationship between polynomial algebraic curves and algebraic curves in two variables. Let $T=\{C_1,C_2,C_3\}\subset \mathbb{Z}(x_1,\dots,x_n)$ and suppose that three algebraic curves have a common integral solution, so that $V(T) = \{P \in \mathbb{A}^n_{\mathbb{Z}} \mid C_i(P)=0 \text{ for all } C_i \in T\}$. Then a fourth curve $C_P$ can be constructed. If $x>1$ and $y\ge 1$ and $\forall (x,y) \in C_P(\mathbb{Z}^+)$, then the resulting curve satisfies $C_P(\mathbb{Z}^+) = \emptyset$.

The second main topic concerns intersections of solution sets of algebraic curves. We establish a connection between linear equations and the intersections of solution sets of certain algebraic curves. Let $S_M(\mathbb{Z}) = \{s_1,s_2,\dots,s_N\} \subset \mathbb{Z}^{m+2}$ be a solution of the linear equation $S_M: MY^2 = -a_0 x_0 + a_1 x_1 - \cdots - a_{2m} x_{2m}$. If three algebraic curves share certain coefficients occurring in this linear equation, denoted by $C_{P_i}, G_{P_i}, F_{P_i} \in \mathbb{Z}(x,y)$, and $C_{P_i}(\mathbb{Z}^+), G_{P_i}(\mathbb{Z}^+), F_{P_i}(\mathbb{Z}^+)$ are positive integer solutions, and if $x>1$, $y\ge 1$ and $\forall (x,y) \in C_{P_1}(\mathbb{Z}^+), G_{P_2}(\mathbb{Z}^+), F_{P_3}(\mathbb{Z}^+)$, their solution sets have empty common intersection, that is, $C_{P_i}(\mathbb{Z}^+)\cap G_{P_i}(\mathbb{Z}^+)\cap F_{P_i}(\mathbb{Z}^+) = \emptyset$. If $|S_M(\mathbb{Z})| = N$ then
\[
\Big\{C_{P_i}(\mathbb{Z}^+)\cap G_{P_i}(\mathbb{Z}^+)\cap F_{P_i}(\mathbb{Z}^+)\Big\}^{N}_{i=1} = \emptyset.
\]
Moreover, if the linear equation admits infinitely many solutions, then this construction yields infinitely many algebraic curves whose solution sets have no common intersection. Thus, the corresponding family satisfies the required disjointness condition, expressed as
\[
\Big\{C_{P_i}(\mathbb{Z}^+)\cap G_{P_i}(\mathbb{Z}^+)\cap F_{P_i}(\mathbb{Z}^+)\Big\}^{\infty}_{i=1} = \emptyset \quad \text{if } N \Rightarrow \infty.
\]

The work is organized as follows. In Section 2, solutions of the function $M(y^2+\alpha^2) = (x+1)f(x)$ where $f(x)\in\mathbb{Z}(x_1,\dots,x_n)$ are studied using primes of the form $p=4n\pm 1$. This analysis leads to THEOREM 2.3. In Section 3, the relationship between solutions of algebraic curves and the function $M(y^2+\alpha^2)=(x+1)f(x)$ is investigated, resulting in THEOREM 3.1. THEOREM 2.3 and 3.1 are then combined to obtain THEOREM 3.2, which constitutes the principal result of the present work. Section 4 is devoted to the relationship between linear algebra and integral solutions of algebraic curves. We prove that, whenever the system $S$ has a solution, the solutions of $S$ can be used to construct algebraic curves having no positive integral solutions. This yields THEOREM 4.1. We then establish the properties and conditions that the linear system $S$ must satisfy in THEOREMS 4.2, 4.3, and 4.5. In Section 5, the linear relationship between algebraic curves is extended to quadratic and cubic forms. The properties of these equations are described and analyzed using tools from linear algebra. The corresponding results are established in THEOREM 5.1, 5.2, and 5.3. In Section 6, the preceding relationship is generalized to polynomial algebraic curves and the polynomial $f$. The structure and relevant properties of these polynomial curves are studied and clarified in THEOREMS 6.1, 6.2 and 6.3. Finally, Section 7 addresses the second main topic of the work, namely, intersections of solution sets of algebraic curves. We prove that, if a linear equation has a solution, then three algebraic curves can be constructed from the solutions of that equation in such a way that the resulting curves have no common intersection. Furthermore, if the linear equation has infinitely many solutions, then infinitely many algebraic curves can be constructed whose solution sets have no common intersection. The structure of the linear equation and the resulting algebraic curves is described in the corresponding THEOREMS 7.1 and 7.2.

\subsection{Proof Methodology}

The proof methodology is arithmetic; we employ the same techniques as those used in \cite{ref18}, but in a generalized form. We first study properties of the solutions associated with the function $M(y^2+\alpha^2) = (x+1)f(x)$ where $f(x)\in\mathbb{Z}(x_1,\dots,x_n)$, using primes of the form $p=4n\pm 1$, leading to THEOREM 2.3. We then connect $M(y^2+\alpha^2) = (x+1)f(x)$ with solutions of algebraic curves in THEOREM 3.1. Combining these two results yields the strong statement of THEOREM 3.2 concerning the properties of the solutions. THEOREM 4.1 follows from THEOREM 3.2, and the remaining results are derived successively by the same underlying approach.

The argument therefore proceeds through arithmetic properties, their transfer to algebraic curves via $M(y^2+\alpha^2) = (x+1)f(x)$, and successive constructions that extend the framework to linear systems, higher degree forms, polynomial curves, and intersections.

\section{The Method of Proof}

In this section, we establish the fundamental results on which the proof of the main results in this study relies. First, we prove THEOREM 2.1, a result concerning the properties of prime numbers of the form $p=4n-1$. We also state Fermat's theorem on the sum of two squares. We use these results in proving THEOREM 2.3. These findings are crucial for the proof of the principal results.

\begin{theorem}\label{thm2.1}
Let $n = 4m-1$ and $m>0 \in \mathbb{N}$. Then we find $p$ is a prime where $p\mid n$ and $p\equiv -1 \pmod 4$, $p\le n$.
\end{theorem}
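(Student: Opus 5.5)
The plan is the classical argument that every positive integer $n\equiv 3 \pmod 4$ has a prime divisor $p\equiv 3\pmod 4$. Since $n=4m-1$ with $m\ge 1$, we have $n\ge 3$, so $n>1$ is odd and has a prime factorization $n=p_1p_2\cdots p_r$ with $r\ge 1$. Any prime $p\mid n$ automatically satisfies $p\le n$, so the inequality $p\le n$ requires no separate work. The task is to show that at least one $p_i$ lies in the class $-1 \pmod 4$.

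First, because $n$ is odd, no $p_i$ equals $2$. Hence every $p_i$ is odd and satisfies either $p_i\equiv 1\pmod 4$ or $p_i\equiv -1\pmod 4$.

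Next, argue by contradiction. Suppose every $p_i\equiv 1\pmod 4$. The residues $\{1\}$ modulo $4$ are closed under multiplication, since $1\cdot 1\equiv 1$. By induction on $r$, this gives $n=\prod p_i\equiv 1\pmod 4$. That contradicts $n=4m-1\equiv -1\pmod 4$, because $1\not\equiv -1 \pmod 4$. Therefore some prime $p\mid n$ has $p\equiv -1\pmod 4$.

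A refinement is also worth recording for later use. The number of prime factors counted with multiplicity that are $\equiv -1 \pmod 4$ must be odd, since $(-1)^k\equiv -1 \pmod 4$ forces $k$ odd. This refinement is not needed for the statement as given.

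There is no real obstacle. The only point needing care is to exclude the prime $2$ by parity before reducing the product modulo $4$. The edge case $n$ prime, for example $m=1$ with $n=3$, is covered automatically by taking $p=n$.
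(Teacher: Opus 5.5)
Your proof is correct and follows the same route as the paper: factor $n$, note that if every prime factor were $\equiv 1 \pmod 4$ the product would be $\equiv 1 \pmod 4$, and this contradicts $n\equiv -1\pmod 4$. You also exclude the prime $2$ explicitly by parity, which the paper leaves implicit and which is what justifies the dichotomy $p\equiv \pm 1 \pmod 4$ for the prime factors.
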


\begin{proof}
Let $n=4m-1 \in \mathbb{N}$; so, according to the fundamental theorem of arithmetic (see \cite[Ch.\,1, p.\,83, Thm.\,3,4]{tattersall}), we have $n = p_1^{e}\cdot p_2 \cdots p_n$. We conclude from this that if $p_1^{e}, p_2,\dots,p_n$ are all of the form $p_j=4n+1$, then $n = 4m-1 = 4w+1 = p_1^{e}\cdot p_2\cdots p_n$; this is a contradiction. Therefore, there is at least one prime number of the form $p=4k-1$ where $n=p(4w+1)=4m-1$, so $p\le n$.
\end{proof}

\begin{theorem}[Fermat's theorem on the sum of two squares]\label{thm2.2}
Let $m$ be a natural number that is not a perfect square. Then $m=a^2+b^2$ if and only if all prime factors of $m$ are not of the form $p=4n-1$.
\end{theorem}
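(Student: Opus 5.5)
The plan is to prove the classical statement in two directions, with the Brahmagupta--Fibonacci identity as the organising tool. One caveat should be stated first, because it affects how the result can be used in THEOREM 2.3. As literally stated, the ``only if'' direction is false: $18 = 3^2+3^2$ and $45 = 6^2+3^2$ are not perfect squares, yet both are divisible by $3 = 4\cdot 1 - 1$. So the plan proves the correct classical version, which has two parts. First, the ``if'' direction exactly as stated. Second, the ``only if'' direction in two repaired forms: either every prime $p\equiv -1 \pmod 4$ divides $m$ to an \emph{even} power, or the representation is primitive, i.e.\ $\gcd(a,b)=1$, in which case no such prime divides $m$ at all.

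\emph{Direction ``if''.} Suppose every prime factor of $m$ is $2$ or is $\equiv 1 \pmod 4$.
\begin{enumerate}
\item The prime $2$ is handled directly: $2=1^2+1^2$.
\item For a prime $p\equiv 1\pmod 4$, show that $-1$ is a quadratic residue mod $p$. Wilson's theorem gives $\big(\big(\tfrac{p-1}{2}\big)!\big)^2\equiv -1 \pmod p$, so there is $u$ with $p\mid u^2+1$.
\item Apply Thue's lemma: a pigeonhole argument on the values $x - uy$ with $0\le x,y\le \lfloor\sqrt p\rfloor$. It yields $x,y$, not both zero, with $x\equiv uy \pmod p$ and $|x|,|y|<\sqrt p$. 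Then $p\mid x^2+y^2$ and $0<x^2+y^2<2p$, so $x^2+y^2=p$. Zagier's involution proof would serve equally well here.
\item Combine the prime factors using
\[
(a^2+b^2)(c^2+d^2)=(ac-bd)^2+(ad+bc)^2 .
\]
Induction on the number of prime factors, counted with multiplicity, then shows that $m$ is a sum of two squares.
\end{enumerate}
The hypothesis that $m$ is not a perfect square is not needed for this direction.

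\emph{Direction ``only if''.} Suppose $m=a^2+b^2$ and that a prime $p\equiv -1\pmod 4$ divides $m$.
\begin{enumerate}
\item If $p\nmid a$, then $(ba^{-1})^2\equiv -1\pmod p$. So $-1$ would be a quadratic residue, and Euler's criterion gives $(-1)^{(p-1)/2}=1$. This is impossible, since $(p-1)/2$ is odd.
\item Hence $p\mid a$, and then also $p\mid b$, so $p^2\mid m$. Dividing through gives $m/p^2=(a/p)^2+(b/p)^2$.
\item Descent on $m$ shows that $p$ occurs in $m$ to an even power.
\item In particular, if $\gcd(a,b)=1$, no prime $p\equiv -1\pmod 4$ divides $m$. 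This is the form closest to the statement as written, and THEOREM \ref{thm2.1} supplies such primes whenever the relevant quantity is $\equiv -1 \pmod 4$.
\end{enumerate}

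\emph{Main obstacle.} The hardest step is producing the representation $p=x^2+y^2$ for a prime $p\equiv 1\pmod 4$. I would first try the Wilson-plus-Thue route above, since it is the shortest self-contained argument. The other genuine issue is the incorrect ``only if'' as stated. When this theorem is invoked later, in THEOREM 2.3, it should be applied either in the even-exponent form or to primitive representations with $\gcd(a,b)=1$.
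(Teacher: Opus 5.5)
Your proposal is correct, and for the ``if'' direction it follows the same skeleton as the paper: factor $m$, write $2=1^2+1^2$, represent each prime $p\equiv 1\pmod 4$ as a sum of two squares, and glue the factors with a product identity and induction. Your version is more complete in three respects.

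First, the paper does not prove the prime case. It cites Fermat's theorem, which is just the statement being proved restricted to primes. Your Wilson-plus-Thue step makes the argument self-contained.

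Second, the paper's gluing identity is misstated: $(a_1a_2+b_1b_2)^2+(a_1a_2-b_1b_2)^2=2a_1^2a_2^2+2b_1^2b_2^2$, which is not $(a_1^2+b_1^2)(a_2^2+b_2^2)$. For example, with $a_1=a_2=1$ and $b_1=b_2=2$ it gives $34$ instead of $25$. Your $(ac-bd)^2+(ad+bc)^2$ is the correct form.

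Third, the paper gives no argument for the ``only if'' direction and never uses the non-square hypothesis. Your counterexamples $18=3^2+3^2$ and $45=6^2+3^2$ are valid, so that direction is false as written, and your repaired versions are the right replacement.

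One refinement concerns the later use in LEMMA~\ref{lem2.1} and THEOREM~\ref{thm2.3}. There $y^2+\alpha^2$ need not be primitive (e.g.\ $y=\alpha=5$). Also, the even-exponent form alone would not exclude $q^2\mid y^2+\alpha^2$. What is actually needed is your step showing that a prime $q\equiv -1\pmod 4$ dividing $a^2+b^2$ must divide $a$. Applied with $a=\alpha$, this is ruled out by the hypothesis that every prime divisor of $\alpha$ is $\equiv 1\pmod 4$.
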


\begin{proof}
See \cite[Ch.\,8, p.\,242]{tattersall}. Suppose all prime divisors of $m$ are not of the form $4n-1$. If $m=1$ then $m=1^2+0^2$, and if $m>1$ then we have $m=\prod_{i=1}^{r} p_i$. Now, if $p_i=2$ for each $i=1,2,\dots,r$ then we find that $p_i=1^2+1^2$, and if $p_i\equiv 1\pmod 4$ for all $i$ we get $p_i=a_i^2+b_i^2$ according to Fermat's theorem on the sum of two squares (see \cite[Ch.\,8, p.\,242]{tattersall}), but
\[
p_1 p_2 = (a_1^2+b_1^2)(a_2^2+b_2^2) = (a_1 a_2+b_1 b_2)^2+(a_1 a_2-b_1 b_2)^2.
\]
So, by induction on $r$, we can prove that
\[
\prod_{i=1}^{r} p_i = a^2+b^2. \qedhere
\]
\end{proof}

\begin{lemma}\label{lem2.1}
Let $M,\alpha \ge 1 \in \mathbb{N}$ be integers, where $\alpha, M$ are odd numbers and constants, for all prime divisors of $M,\alpha \ge 1$ of the form $p_j = 4n+1$. Then
\[
M(y^2+\alpha^2) \ne (x+1)f(x) \quad \text{if } x=4m-1 \text{ or } 4m+2 \text{ and } (x,y) \in \mathbb{Z}^+\times\mathbb{Z} \text{ where } \forall f(x)\in\mathbb{Z}(x).
\]
\end{lemma}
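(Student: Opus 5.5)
The plan is to argue by divisibility. Since $f(x)\in\mathbb{Z}(x)$ and $x\in\mathbb{Z}^+$, the value $f(x)$ is an integer. So an equality $M(y^2+\alpha^2)=(x+1)f(x)$ would force $x+1\mid M(y^2+\alpha^2)$. It therefore suffices to show that, under the hypotheses on $M$ and $\alpha$, the number $M(y^2+\alpha^2)$ is never divisible by $x+1$ when $x=4m-1$ or $x=4m+2$. The two cases are handled separately, one by working modulo $4$ and one by using a prime of the form $4n-1$.

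\textbf{Case $x=4m-1$.} Here $x+1=4m$, so we would need $4\mid M(y^2+\alpha^2)$.
\begin{itemize}
\item Since $\alpha$ is odd, $\alpha^2\equiv 1\pmod 4$, and $y^2\equiv 0$ or $1\pmod 4$. Hence $y^2+\alpha^2\equiv 1$ or $2\pmod 4$, and in particular $4\nmid y^2+\alpha^2$.
\item Since $M$ is odd, multiplying by $M$ does not add any factor of $2$.
\end{itemize}
So $4\nmid M(y^2+\alpha^2)$, which is the desired contradiction. This step is elementary and uses only the parity of $M$ and $\alpha$.

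\textbf{Case $x=4m+2$.} Here $x+1=4m+3=4(m+1)-1$.
\begin{itemize}
\item By THEOREM~\ref{thm2.1} there is a prime $p\equiv -1\pmod 4$ with $p\mid x+1$. Then $p\mid M(y^2+\alpha^2)$.
\item All prime divisors of $M$ are of the form $4n+1$, so $p\nmid M$. Hence $p\mid y^2+\alpha^2$.
\item All prime divisors of $\alpha$ are also of the form $4n+1$, so $p\nmid\alpha$. Then $\alpha$ is invertible mod $p$, and $(y\alpha^{-1})^2\equiv -1\pmod p$.
\item This makes $-1$ a quadratic residue modulo $p$, which is impossible for $p\equiv 3\pmod 4$ by Euler's criterion, since $(-1)^{(p-1)/2}=-1$.
\end{itemize}
Alternatively, one can phrase this through THEOREM~\ref{thm2.2}: the standard fact that a prime $p\equiv 3\pmod 4$ dividing $a^2+b^2$ must divide both $a$ and $b$. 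I will use the quadratic-residue argument directly, because THEOREM~\ref{thm2.2} as stated concerns representability rather than this divisibility fact. Note that $y$ is allowed to range over $\mathbb{Z}$, including $y=0$; this causes no trouble, since then the relation would read $p\mid\alpha^2$, again contradicting $p\nmid\alpha$.

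The main point needing care is the second case, namely that $p$ is coprime to both $M$ and $\alpha$. This is exactly where the hypothesis that all prime divisors of $M$ and $\alpha$ are $\equiv 1\pmod 4$ is used. Without that hypothesis $p$ could divide $\alpha$ and $y$ simultaneously, and the argument would fail. The first case uses only that $M$ and $\alpha$ are odd.
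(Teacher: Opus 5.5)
Your proposal is correct and follows essentially the same route as the paper: a mod-$4$ argument when $x=4m-1$, and, when $x=4m+2$, a prime $p\equiv 3 \pmod 4$ dividing $x+1$ (from THEOREM~\ref{thm2.1}) that cannot divide $M$ and so must divide $y^2+\alpha^2$. Your Euler-criterion step explicitly uses $p\nmid\alpha$, which is a cleaner justification than the paper's appeal to THEOREM~\ref{thm2.2}: the paper claims every prime divisor of $y^2+\alpha^2$ is $\equiv 1 \pmod 4$, which is not literally true (e.g.\ $2$ may divide it) and is not what that theorem states.
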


\begin{proof}
Let $M,\alpha\ge 1 \in \mathbb{N}$ be integers where $\alpha, M$ are odd numbers, constants for all prime divisors of $M,\alpha$ of the form $p_j=4n+1$, and
\[
M(y^2+\alpha^2) = (x+1)f(x) \quad \text{where } \forall f(x)\in\mathbb{Z}(x).
\]
Let $x=4m-1$ and $y\ge 1 \in \mathbb{Z}$. Then
\[
M(y^2+\alpha^2) = (4m)f(4m-1) \quad \text{where } \forall f(4m-1) \in \mathbb{Z}(x).
\]
Where $M$ is an odd number, thus $M\not\equiv 0\pmod 4$; we deduce from this that $y^2+\alpha^2 \equiv 0\pmod 4$, but $\alpha=2k+1$ is an odd number, thus $y^2+(2k+1)^2 \not\equiv 0\pmod 4$ for all $y\in\mathbb{Z}$. Therefore we obtain $M(y^2+\alpha^2)\ne (4m)f(x)$, and then
\[
M(y^2+\alpha^2) \ne (x+1)f(x) \quad \text{if } (x=4m-1,\ y)>1 \in \mathbb{Z}^+\times\mathbb{Z}.
\]
Let $x=4m+2$ and $y\ge 1\in\mathbb{Z}$. We obtain from that
\[
M(y^2+\alpha^2) = (4m+2+1)f(4m+2) = (4(m+1)-1)f(x) = (4k-1)f(4m+2).
\]
Then
\[
M(y^2+\alpha^2) = (4k-1)f(4m+2).
\]
According to THEOREM~\ref{thm2.1}, this means there is a prime number that divides $4k-1$; therefore $4k-1 = qm'$ where $q=4n-1$. Therefore
\[
M(y^2+\alpha^2) = (qm')f(4m+2).
\]
According to the conditions, $M$ is an odd number, and all its divisors are of the form $p=4n+1$, therefore $M\not\equiv 0\pmod q$, which means $y^2+\alpha^2 \equiv 0\pmod q$. However, according to THEOREM~\ref{thm2.2}, all divisors of $y^2+\alpha^2$ are of the form $p=4n+1$, so we conclude $y^2+\alpha^2\not\equiv 0\pmod q$. Therefore $M(y^2+\alpha^2)\ne (qm')f(4m+2)$, and this means
\[
M(y^2+\alpha^2) \ne (x+1)f(x) \quad \text{if } (x=4m+2, y)>1 \in \mathbb{Z}^+\times\mathbb{Z}.
\]
Then we have that
\[
M(y^2+\alpha^2) \ne (x+1)f(x) \quad \text{if } x=4m-1, 4m+2 \text{ and } (x,y)\in(\mathbb{Z}^2)^{+} \text{ where } \forall f(x)\in\mathbb{Z}(x). \qedhere
\]
\end{proof}

\begin{theorem}\label{thm2.3}
Let $a_0,a_1,\dots,a_n,\alpha,M \in \mathbb{Z}$ be integers, and $M,\alpha\ge 1$ be odd positive integer constants, for all prime divisors of $M,\alpha = p_1\cdot p_2\cdot p_3\cdots p_n$ of the form $p_j=4n+1$. Then we have
\[
M(y^2+\alpha^2) \ne (x+1)f(x) \quad \text{for all } (x,y)\in\mathbb{Z}^+\times\mathbb{Z}, \text{ where } x>1,\ y\ge 1,
\]
if
\[
f(x) = \sum_{j=0}^{n-1} a_j \sum_{i=0}^{n-j-1} \binom{n-j}{i}(x+1)^{n-j-i-1}(-1)^i,
\]
\[
\sum_{j=0}^{n-1} a_j(-1)^{n-j+1} = 4k-1,
\]
\[
\sum_{j=0}^{n-3} (-1)^{n-j-2} a_j (n-j)(n-j-2) + a_{n-1} = 4U-1,
\]
\[
U,K\ge 0\in\mathbb{Z}, \qquad \sum_{j=0}^{n} (-1)^{n-j+1} a_j = M\alpha^2 .
\]
\end{theorem}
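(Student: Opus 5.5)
The first step is to simplify $f$. By the binomial theorem, for each $j$ we have $\sum_{i=0}^{n-j-1}\binom{n-j}{i}(x+1)^{n-j-i}(-1)^i=\big((x+1)-1\big)^{n-j}-(-1)^{n-j}=x^{n-j}-(-1)^{n-j}$. Hence
\[
(x+1)f(x)=\sum_{j=0}^{n-1}a_j\big(x^{n-j}-(-1)^{n-j}\big),
\]
which is a genuine polynomial identity in $\mathbb{Z}[x]$. The equation $M(y^2+\alpha^2)=(x+1)f(x)$ is therefore an explicit polynomial Diophantine equation. I would then split according to the residue of $x$ modulo $4$.

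For $x\equiv 3\pmod 4$ and $x\equiv 2\pmod 4$, LEMMA~\ref{lem2.1} applies directly. Its hypotheses match ours: $M,\alpha$ are odd and all their prime divisors are $\equiv 1\pmod 4$. It holds for every $f\in\mathbb{Z}[x]$, so these two classes need no further work.

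It remains to treat $x\equiv 0$ and $x\equiv 1\pmod 4$. The common tool is the behaviour of the left side modulo $4$ and $8$. Every prime divisor of $M$ is $\equiv 1\pmod 4$, so $M\equiv 1\pmod 4$. Since $\alpha$ is odd, $y^2+\alpha^2\equiv 1\pmod 4$ when $y$ is even. When $y$ is odd, $y^2+\alpha^2\equiv 2\pmod 8$, so $(y^2+\alpha^2)/2\equiv 1\pmod 4$. Consequently the left side is always either $\equiv 1\pmod 4$, or of the form $2u$ with $u\equiv 1\pmod 4$.

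\emph{Case $x\equiv 0\pmod 4$.} Reducing the displayed identity mod $4$ kills every $x^{n-j}$ with $n-j\ge 1$. So $(x+1)f(x)\equiv\sum_{j=0}^{n-1}a_j(-1)^{n-j+1}=4k-1\pmod 4$, by the second hypothesis. The right side is thus $\equiv 3\pmod 4$, which is neither $\equiv 1\pmod 4$ nor even, a contradiction.

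\emph{Case $x\equiv 1\pmod 4$.} Here $x+1=2v$ with $v$ odd, so we need the residue of $f(x)$ mod $4$. I would Taylor-expand $f$ about $-1$:
\[
f(x)=f(-1)+f'(-1)(x+1)+(x+1)^2(\cdots).
\]
Since $x+1\equiv 2\pmod 4$, this gives $f(x)\equiv f(-1)+2f'(-1)\pmod 4$. Writing $g(x)=\sum_j a_j(x^{n-j}-(-1)^{n-j})$, we have $f(-1)=g'(-1)$ and $f'(-1)=g''(-1)/2$. These are explicit sums of $a_j(n-j)(-1)^{n-j-1}$ and $a_j\binom{n-j}{2}(-1)^{n-j}$. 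I expect the combination $f(-1)+2f'(-1)$ to reduce, modulo $4$, to the expression in the third hypothesis, $\sum_{j\le n-3}(-1)^{n-j-2}a_j(n-j)(n-j-2)+a_{n-1}=4U-1$, since $(n-j)+(n-j)(n-j-1)=(n-j)^2$ and $(n-j)(n-j-2)=(n-j)^2-2(n-j)$. The $j=n-2$ term vanishes and the $j=n-1$ term contributes $a_{n-1}$. Granting this, $f(x)\equiv 3\pmod 4$, so $(x+1)f(x)=2\cdot(vf(x))$ with $v$ odd and $f(x)\equiv 3\pmod 4$. Thus $vf(x)\equiv -v\pmod 4$, and a finer look is needed. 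Either the residue of $v$ is controlled, by splitting $x\equiv 1$ versus $x\equiv 5\pmod 8$, or I argue with a prime $q\equiv 3\pmod 4$ dividing the odd factor, exactly as in LEMMA~\ref{lem2.1}. The latter uses THEOREM~\ref{thm2.1} and THEOREM~\ref{thm2.2} together with the fact that $(y^2+\alpha^2)/2$ is again a sum of two squares.

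The last hypothesis $\sum_{j=0}^{n}(-1)^{n-j+1}a_j=M\alpha^2$ I expect to be needed only to pin down the constant term $a_n$, which does not appear in $f$. It fixes the normalization relating the curve to the equation with constant $M\alpha^2$, relevant later in THEOREM 3.1, and it possibly excludes a degenerate solution. I would check whether it enters the case analysis at all.

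The main obstacle is the case $x\equiv 1\pmod 4$. There I must verify that the third hypothesis really is $f(-1)+2f'(-1)$ modulo $4$ and that the resulting residue clashes with $2u$, $u\equiv 1\pmod 4$, uniformly in $v$. I would first carry out this congruence computation carefully for small $n$ to fix the signs. If the mod $4$ information on $v$ is insufficient, I would fall back on the $q\equiv 3\pmod 4$ divisibility argument.
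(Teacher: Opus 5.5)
Your proposal is correct and follows essentially the paper's route. Both use the same split of $x$ modulo $4$ and LEMMA~\ref{lem2.1} for $x\equiv 2,3\pmod 4$. For $x\equiv 1\pmod 4$, your congruence $f(x)\equiv f(-1)+2f'(-1)\equiv 4U-1\pmod 4$ is exactly the paper's expansion (2.5)--(2.7) in powers of $x+1$, and you then use a prime $q\equiv 3\pmod 4$ dividing $f(x)$ as the paper does. Your $x\equiv 0$ case compares residues modulo $4$ directly, where the paper again uses such a prime.

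In the case $x\equiv 1\pmod 4$, commit to the $q$-argument. The mod-$8$ split fails for $x\equiv 5\pmod 8$, since then $vf(x)\equiv 1\pmod 4$ is compatible with the left side. Note also that $f(x)>0$ is forced by positivity of $M(y^2+\alpha^2)$, so $f(x)\equiv 3\pmod 4$ does have a prime factor $q\equiv 3\pmod 4$, which would then have to divide $\alpha$.
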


\begin{proof}
Let $a_0,a_1,\dots,a_n,M,\alpha\in\mathbb{Z}$ be positive integers, where $\alpha,M\ge 1$ is an odd positive integer constant for all prime divisors of $\alpha,M = p_1\cdot p_2\cdot p_3\cdots p_n$ of the form $p_j=4n+1$. Let us have the equation
\begin{equation}
M(y^2+\alpha^2) = (x+1)f(x) \tag{2.1}
\end{equation}
where
\[
f(x) = \sum_{j=0}^{n-1}\sum_{i=0}^{n-j-1} a_j \binom{n-j}{i}(x+1)^{n-j-i-1}(-1)^i, \qquad \sum_{j=0}^{n} (-1)^{n-j+1}a_j = M\alpha^2.
\]
According to LEMMA~\ref{lem2.1}, if $x=4m-1, 4m+2$ and $(x,y)\in\mathbb{Z}^+\times\mathbb{Z}$ where $\forall f(x)\in\mathbb{Z}(x)$, we obtain
\begin{equation}
M(y^2+\alpha^2) \ne (x+1)f(x). \tag{2.2}
\end{equation}
Let the variables in equation (2.1) take the values $M=M$ and $(x=4m-1 \text{ or } 4m+2, y)\in(\mathbb{Z}^2)^+$; then $M(y^2+\alpha^2)=M(y^2+\alpha^2)$, and notice that equation (2.2) holds for all $\forall f(x)\in\mathbb{Z}(x)$, so where $f(x)=f(x)$. Now, from equations (2.1) and (2.2), we obtain that
\begin{equation}
M(y^2+\alpha^2) \ne (x+1)f(x) \quad \text{if } x=4m-1 \text{ or } 4m+2, \text{ where } (x,y)\in\mathbb{Z}^+\times\mathbb{Z}. \tag{2.3}
\end{equation}
We proved this first case when $(x,y)\in\mathbb{Z}^+\times\mathbb{Z}$ if $x=4m-1,4m+2$. Now we prove the second case when $(x=4m+1\vee 4m, y)\in\mathbb{Z}^+\times\mathbb{Z}$. Let $(x=4m+1,y)\in\mathbb{Z}^+\times\mathbb{Z}$ in equation (2.1); from that we get
\begin{equation}
M(y^2+\alpha^2) = (4m+2)f(4m+1) \tag{2.4}
\end{equation}
where
\[
f(4m+1) = \sum_{j=0}^{n-1} a_j \sum_{i=0}^{n-j-1}\binom{n-j}{i}(4m+2)^{n-j-i-1}(-1)^i.
\]
Then we have that
\begin{align}
f(4m+1) &= \sum_{j=0}^{n-1} a_j \sum_{i=0}^{n-j-1}\binom{n-j}{i}(4m+2)^{n-j-i-1}(-1)^i \notag\\
&= \sum_{j=0}^{n-3} a_j \sum_{i=0}^{n-j-3}\binom{n-j}{i} 2^{n-j-i-1}(2m+1)^{n-j-i-1}(-1)^i \notag \\
&\quad + a_j(-1)^{n-j-2}\binom{n-j}{n-j-2}(4m+2) + a_j(-1)^{n-j-1}(n-j) \notag\\
&\quad + a_{n-2}((4m+2)-2) + a_{n-1} \notag\\[4pt]
&= 4\Bigg(\sum_{j=0}^{n-3} a_j \sum_{i=0}^{n-j-3} a_j\binom{n-j}{i} 2^{n-j-i-3}(2m+1)^{n-j-i-1}(-1)^i \notag\\
&\qquad + a_j(-1)^{n-j-2}\binom{n-j}{n-j-2}(m) + a_{n-2}(m)\Bigg) \notag\\
&\quad + \sum_{j=0}^{n-3} (-1)^{n-j-2} a_j (n-j)(n-j-2) + a_{n-1} \tag{2.5}
\end{align}
Now we arrange the terms in the equation such that all terms divisible by 4 are on one side, and the terms not divisible by 4 are on the other side. Let $W$ be equal to
\begin{equation}
W = \sum_{j=0}^{n-3} a_j \sum_{i=0}^{n-j-3} a_j\binom{n-j}{i} 2^{n-j-i-3}(2m+1)^{n-j-i-1}(-1)^i + a_j(-1)^{n-j-2}\binom{n-j}{n-j-2}(m)+a_{n-2}(m). \tag{2.6}
\end{equation}
From equations (2.5) and (2.6), we get
\begin{equation}
f(4m+1) = 4W + \sum_{j=0}^{n-3}(-1)^{n-j-2}a_j(n-j)(n-j-2)+a_{n-1}. \tag{2.7}
\end{equation}
Now assume that
\begin{equation}
\sum_{j=0}^{n-3}(-1)^{n-j-2}a_j(n-j)(n-j-2)+a_{n-1} = 4K-1 \quad \text{where } K\ge 0\in\mathbb{Z}. \tag{2.8}
\end{equation}

It is assumed in equation (2.8), and from equation (2.7) we obtain that
\[
f(4m+1) = 4D \pm 4K - 1 = 4(D\pm K)-1 = 4B-1.
\]
Note that $4D+4(\pm K)-1 = 4B-1$ because $D>K$, so $K\in\mathbb{Z}$, and we have $f(4m+1)=4B-1$. Then from equation (2.1) we find that
\[
M(y^2+\alpha^2) = (4m+2)(4B-1).
\]
So, according to THEOREM~\ref{thm2.1}, there exists a prime number of the form $q=4n-1$ that divides $4B-1$. Therefore $f(4m+1)=4B-1=qw$ where $q=4n-1$; hence
\[
M(y^2+\alpha^2) = (4m+2)qw.
\]
We know all the divisors of the number $M$ are of the form $p=4n+1$, so we find that $M\not\equiv 0\pmod q$, then $y^2+\alpha^2\equiv 0\pmod q$. But according to Fermat's theorem on the sum of two squares, THEOREM~\ref{thm2.2}, all prime divisors of $y^2+\alpha^2$ are of the form $p=4n+1$. From this, we deduce
\[
M(y^2+\alpha^2) \ne (4m+2)qw.
\]
Then
\begin{equation}
M(y^2+\alpha^2) \ne (x+1)f(x) \quad \text{if } (x=4m+1,y)\in\mathbb{Z}^+\times\mathbb{Z}, \tag{2.9}
\end{equation}
and
\[
\sum_{j=0}^{n-3}(-1)^{n-j-2}a_j(n-j)(n-j-2)+a_{n-1} = 4K-1.
\]

Now assume $(x=4m,y)\in\mathbb{Z}^+\times\mathbb{Z}$, so we have that
\[
M(y^2+\alpha^2) = (4m+1)f(4m),
\]
and
\begin{equation}
f(4m) = \sum_{j=0}^{n-1} a_j \sum_{i=0}^{n-j-1}\binom{n-j}{i}(4m+1)^{n-j-i-1}(-1)^i. \tag{2.10}
\end{equation}
Notice that if we expand the term $(4m+1)^{n-j-i-1}(-1)^i$, we get
\begin{align}
(4m+1)^{n-j-i-1}(-1)^i &= 4(-1)^i\left(\sum_{k=0}^{n-j-i-2}\binom{n-j-i-1}{k}4^{n-j-i-2-k}m^{n-j-i-1-k}\right) \notag\\
&\quad + (-1)^i \tag{2.11}
\end{align}
Let $W^i_j$ equal the following:
\begin{equation}
W^i_j = \sum_{k=0}^{n-j-i-2}(-1)^i\binom{n-j-i-1}{k}4^{n-j-i-2-k}m^{n-j-i-1-k}. \tag{2.12}
\end{equation}
So from equations (2.11) and (2.12), we find that
\begin{equation}
(4m+1)^{n-j-i-1}(-1)^i = 4W^i_j + (-1)^i. \tag{2.13}
\end{equation}
By substituting equation (2.13) into equation (2.10), we obtain
\[
f(4m) = \sum_{j=0}^{n-1} a_j\sum_{i=0}^{n-j-1}\binom{n-j}{i}\big(4W^i_j+(-1)^i\big).
\]
Then
\begin{equation}
f(4m) = \sum_{j=0}^{n-1} a_j\sum_{i=0}^{n-j-1}\binom{n-j}{i}4W^i_j + \sum_{j=0}^{n-1} a_j\sum_{i=0}^{n-j-1}\binom{n-j}{i}(-1)^i. \tag{2.14}
\end{equation}
Note that
\[
\sum_{i=0}^{n-j-1}\binom{n-j}{i}(-1)^i = (1-1)^{n-j} - (-1)^{n-j}.
\]
Then
\begin{equation}
\sum_{i=0}^{n-j-1}\binom{n-j}{i}(-1)^i = (-1)^{n-j+1}. \tag{2.15}
\end{equation}
So from equations (2.14) and (2.15) we find that
\[
f(4m) = 4\left(\sum_{j=0}^{n-1} a_j\sum_{i=0}^{n-j-1}\binom{n-j}{i}W^i_j\right) + \sum_{j=0}^{n-1} a_j(-1)^{n-j+1}.
\]
Let $d$ be equal to
\[
d = \sum_{j=0}^{n-1} a_j\sum_{i=0}^{n-j-1}\binom{n-j}{i}W^i_j.
\]
Then
\begin{equation}
f(4m) = 4d + \sum_{j=0}^{n-1} a_j(-1)^{n-j+1}. \tag{2.16}
\end{equation}
Now assume that
\begin{equation}
\sum_{j=0}^{n-1} a_j(-1)^{n-j+1} = 4U-1 \quad \text{where } U\ge 0\in\mathbb{Z}. \tag{2.17}
\end{equation}
Now, from the assumption in (2.17) and the equation in (2.16), we find that
\[
f(4m) = 4d + 4(\pm U) - 1 = 4D-1 \quad \text{if } D = d\pm U \text{ because } d>U.
\]
Then
\[
M(y^2+\alpha^2) = (4m+1)(4D-1).
\]
So, according to THEOREM~\ref{thm2.1}, there exists a prime number of the form $q=4n-1$ that divides $4D-1$. Therefore $f(4m)=4D-1=qw$ where $q=4n-1$, hence
\[
M(y^2+\alpha^2) = (4m+1)(qw).
\]
But according to Fermat's theorem on the sum of two squares, THEOREM~\ref{thm2.2}, all prime divisors of $y^2+\alpha^2$ are of the form $q=4n+1$. From this, we deduce
\[
M(y^2+\alpha^2) \ne (4m+1)(qw) \quad \text{if } \sum_{j=0}^{n-1} a_j(-1)^{n-j+1} = 4U-1.
\]
Then
\begin{equation}
M(y^2+\alpha^2) \ne (x+1)f(x) \quad \text{if } (x=4m,y)\in\mathbb{Z}^+\times\mathbb{Z}. \tag{2.18}
\end{equation}

Now, according to equations (2.3), (2.9), (2.18), we obtain that
\[
M(y^2+\alpha^2) \ne (x+1)f(x)
\]
if $(x=4m-1,4m+1,4m,4m+2,\,y) \in \mathbb{Z}^+\times\mathbb{Z}$ where $x>1,\,y\ge 1$. Note that all integers are written in one of these forms. Notice if $x>1\in\mathbb{Z}^+$, this means $x=4m-1$ or $4m+1$ or $4m$ or $4m+2$. We deduce from that:
\begin{equation}
(x=4m-1,4m+1,4m,4m+2,\,y)\in\mathbb{Z}^+\times\mathbb{Z} \;=\; (x,y)\in\mathbb{Z}^+\times\mathbb{Z}, \quad \text{let } x>1,\, y\ge 1. \tag{2.19}
\end{equation}
We now deduce from all those equations (2.3), (2.9), (2.18), (2.19) the following result, provided that the conditions imposed on the coefficients are met in order to reach the final proof. If
\[
f(x) = \sum_{j=0}^{n-1}\sum_{i=0}^{n-j-1} a_j\binom{n-j}{i}(x+1)^{n-j-i-1}(-1)^i,
\]
\[
\sum_{j=0}^{n-1} a_j(-1)^{n-j+1} = 4k-1, \qquad \sum_{j=0}^{n-3}(-1)^{n-j-2}a_j(n-j)(n-j-2)+a_{n-1} = 4U-1,
\]
\[
U,K\ge 0\in\mathbb{Z}, \qquad \sum_{j=0}^{n}(-1)^{n-j+1}a_j = M\alpha^2,
\]
then
\[
M(y^2+\alpha^2) \ne (x+1)f(x) \quad \text{for all } (x,y)\in\mathbb{Z}^+\times\mathbb{Z} \text{ where } x>1 \text{ and } y\ge 1. \qedhere
\]
\end{proof}

\section{Solutions of Algebraic Curves}

In this section, we study the relation between the solutions of algebraic curves and functions of the form $M(y^2+\alpha^2)=(x+1)f(x)$. We find that the integral solutions of algebraic curves are connected to this class of functions. This is explained in THEOREM~\ref{thm3.1}. Subsequently, we prove THEOREM~\ref{thm3.2}. The proof relies on THEOREM~\ref{thm3.1} and THEOREM~\ref{thm2.3}. THEOREM~\ref{thm3.2} establishes the non-existence of positive integral solutions for any algebraic curve of degree $n$, provided that certain algebraic conditions on the coefficients of the algebraic curve are satisfied.

\begin{theorem}\label{thm3.1}
Let $C: My^2 = a_0 x^n + a_1 x^{n-1} + a_2 x^{n-2} + \cdots + a_n$ be an algebraic curve, nonsingular of genus $g\ge 0$ and $\deg C \ge 2$, where $a_0,a_1,\dots,a_n\in\mathbb{Z}$ are the coefficients of the curve $C$, where $\alpha,M\ge 1$ are odd numbers, all the prime divisors of which are of the form $p=4n+1$, and $C(\mathbb{Z}^+)$ are positive integer points. Then we have
\[
\text{if } M(y^2+\alpha^2) \ne (x+1)f(x) \text{ then } (x,y)\notin C(\mathbb{Z}^+),
\]
where
\[
f(x) = \sum_{j=0}^{n-1}\sum_{i=0}^{n-j-1} a_j\binom{n-j}{i}(x+1)^{n-j-i-1}(-1)^i,
\]
\[
\sum_{j=0}^{n}(-1)^{n-j+1}a_j = M\alpha^2, \qquad \sum_{j=0}^{n-1} a_j(-1)^{n-j+1} = 4k-1,
\]
\[
\sum_{j=0}^{n-3}(-1)^{n-j-2}a_j(n-j)(n-j-2)+a_{n-1} = 4U-1, \qquad U,K\ge 0\in\mathbb{Z}.
\]
\end{theorem}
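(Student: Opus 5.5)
The plan is to prove the contrapositive: if $(x,y)\in C(\mathbb{Z}^+)$, then $M(y^2+\alpha^2)=(x+1)f(x)$. This will follow from a polynomial identity in $\mathbb{Z}[x]$,
\[
a_0x^n+a_1x^{n-1}+\cdots+a_n \;=\; (x+1)f(x)\;-\;M\alpha^2 ,
\]
valid for every $x$ once the coefficient condition $\sum_{j=0}^{n}(-1)^{n-j+1}a_j=M\alpha^2$ holds. The remaining hypotheses (the $4k-1$ and $4U-1$ conditions, nonsingularity, the genus, and the prime factorization of $M,\alpha$) are not needed for this implication. They only matter when the result is later combined with THEOREM~\ref{thm2.3}.

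\textbf{Step 1: rewrite each monomial in powers of $x+1$.} Write $x=(x+1)-1$ and expand each monomial by the binomial theorem:
\[
a_jx^{n-j}=a_j\sum_{i=0}^{n-j}\binom{n-j}{i}(x+1)^{n-j-i}(-1)^i .
\]
Separate off the last term $i=n-j$, which equals $a_j(-1)^{n-j}$. Every other term, with $0\le i\le n-j-1$, contains at least one factor $(x+1)$. Pulling out one factor $(x+1)$ gives
\[
a_jx^{n-j}=(x+1)\,a_j\sum_{i=0}^{n-j-1}\binom{n-j}{i}(x+1)^{n-j-i-1}(-1)^i \;+\; a_j(-1)^{n-j}.
\]
For $j=n$ the inner sum is empty and only the constant $a_n$ remains.

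\textbf{Step 2: sum over $j$.} Summing from $j=0$ to $n$ gives
\[
a_0x^n+\cdots+a_n=(x+1)f(x)+\sum_{j=0}^{n}(-1)^{n-j}a_j ,
\]
with $f$ exactly as in the statement. The hypothesis $\sum_{j=0}^{n}(-1)^{n-j+1}a_j=M\alpha^2$ says the constant term equals $-M\alpha^2$, which proves the displayed identity.

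\textbf{Step 3: conclude.} If $(x,y)\in C(\mathbb{Z}^+)$, then $My^2=(x+1)f(x)-M\alpha^2$, that is, $M(y^2+\alpha^2)=(x+1)f(x)$. Contrapositively, if $M(y^2+\alpha^2)\ne(x+1)f(x)$, then $(x,y)\notin C(\mathbb{Z}^+)$.

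The only delicate point is the index bookkeeping in Step 1. One has to check that the upper limit $n-j-1$ of the inner sum in $f$ matches exactly the removal of the $i=n-j$ term. One also has to confirm that the $j=n$ term contributes only to the constant, so that $f$ correctly runs only up to $j=n-1$. Beyond that, the argument is a direct expansion with no number theory involved.
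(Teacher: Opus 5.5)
Your proof is correct and follows the paper's argument. Both expand $a_jx^{n-j}=a_j\bigl((x+1)-1\bigr)^{n-j}$ binomially, split off the $i=n-j$ term, and use $\sum_{j=0}^{n}(-1)^{n-j+1}a_j=M\alpha^2$ to turn the leftover constant into $-M\alpha^2$; the paper does these substitutions directly inside the inequality, while your isolate-the-identity-and-take-the-contrapositive version is cleaner and shows that the $4k-1$, $4U-1$ and prime-divisor hypotheses are not needed here.
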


\begin{proof}
Let $C: My^2 = a_0 x^n + a_1 x^{n-1} + \cdots + a_{n-1}x + a_n$ be an algebraic curve, nonsingular of genus $g\ge 0$, with coefficients $a_0,a_1,a_2,\dots,a_n\in\mathbb{Z}$, where $M,\alpha\ge 1$ are odd constants, all the prime divisors of $M,\alpha$ are of the form $p=4m+1$, and $C(\mathbb{Z}^+)$ are the positive integer solutions for curve $C$, and the coefficients of the curve possess the following property:
\[
\sum_{j=0}^{n-1} a_j(-1)^{n-j+1} = 4k-1, \qquad \sum_{j=0}^{n-3}(-1)^{n-j-2}a_j(n-j)(n-j-2)+a_{n-1}=4U-1, \qquad U,K\ge 0\in\mathbb{Z},
\]
\begin{equation}
M\alpha^2 = -\sum_{j=0}^{n}(-1)^{n-j}a_j. \tag{3.1}
\end{equation}
Now, suppose we have the following equation
\begin{equation}
M(y^2+\alpha^2) \ne (x+1)\sum_{j=0}^{n-1}\sum_{i=0}^{n-j-1} a_j\binom{n-j}{i}(x+1)^{n-j-i-1}(-1)^i \quad \text{for all } (x,y)\in\mathbb{Z}^+\times\mathbb{Z}. \tag{3.2}
\end{equation}
So we have that
\begin{equation}
My^2 \ne \sum_{j=0}^{n-1}\sum_{i=0}^{n-j-1} a_j\binom{n-j}{i}(x+1)^{n-j-i}(-1)^i - M\alpha^2. \tag{3.3}
\end{equation}
Notice from equations (3.1) and (3.3) we obtain that
\begin{equation}
My^2 \ne \sum_{j=0}^{n-1}\sum_{i=0}^{n-j-1} a_j\binom{n-j}{i}(x+1)^{n-j-i}(-1)^i + \sum_{j=0}^{n} (-1)^{n-j} a_j. \tag{3.4}
\end{equation}
Notice that
\begin{equation}
\sum_{i=0}^{n-j-1} a_j\binom{n-j}{i}(x+1)^{n-j-i}(-1)^i + (-1)^{n-j}a_j = a_j\big((x+1)-1\big)^{n-j}. \tag{3.5}
\end{equation}
By substituting equation (3.5) into equation (3.4), we find that
\[
My^2 \ne \sum_{j=0}^{n} a_j\big((x+1)-1\big)^{n-j}.
\]
Let $1,-1\in G=(\mathbb{Z},+)$ be a group and $e\in G$ be the identity element. Note that $1-1=e$, thus we get
\[
My^2 \ne \sum_{j=0}^{n-1} a_j(x+e)^{n-j}.
\]
So we have that
\begin{equation}
My^2 \ne \sum_{j=0}^{n} a_j x^{n-j} \quad \text{this means } (x,y)\notin C(\mathbb{Z}^+). \tag{3.6}
\end{equation}
Therefore, from equations (3.6) and (3.2) we have that
\[
\text{if } M(x^2+\alpha^2) \ne (x+1)f(x) \text{ then } (x,y)\notin C(\mathbb{Z}^+),
\]
where
\[
f(x) = \sum_{j=0}^{n-1}\sum_{i=0}^{n-j-1} a_j\binom{n-j}{i}(x+1)^{n-j-i-1}(-1)^i, \qquad M\alpha^2 = \sum_{j=0}^{n}(-1)^{n-j+1}a_j,
\]
\[
\sum_{j=0}^{n-1} a_j(-1)^{n-j+1} = 4k-1, \qquad \sum_{j=0}^{n-3}(-1)^{n-j-2}a_j(n-j)(n-j-2)+a_{n-1}=4U-1, \qquad U,K\ge0\in\mathbb{Z}. \qedhere
\]
\end{proof}

\begin{theorem}\label{thm3.2}
Let $C: My^2 = a_0 x^n + a_1 x^{n-1} + \cdots + a_{n-1}x + a_n$ be an algebraic curve, nonsingular of genus $g\ge 0$ where $\deg C\ge 2$, with coefficients $a_0,a_1,a_2,\dots,a_n\in\mathbb{Z}$, where $M,\alpha\ge 1$ are odd constants, all the prime divisors of $M,\alpha$ are of the form $p=4m+1$, and $C(\mathbb{Z}^+)$ are the positive integer solutions for curve $C$. Then, if
\[
\sum_{j=0}^{n}(-1)^{n-j+1}a_j = M\alpha^2, \qquad \sum_{j=0}^{n-1} a_j(-1)^{n-j+1} = 4k-1,
\]
\[
\sum_{j=0}^{n-3}(-1)^{n-j-2}a_j(n-j)(n-j-2)+a_{n-1} = 4U-1, \qquad U,K\ge 0\in\mathbb{Z},
\]
and if $x>1$ and $y\ge 1$ where $\forall (x,y)\in C(\mathbb{Z}^+)$ holds, then the curve $C$ admits no solutions in positive integers. That is,
\[
C(\mathbb{Z}^+) = \emptyset.
\]
\end{theorem}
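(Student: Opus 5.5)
The plan is to combine THEOREM~\ref{thm3.1} and THEOREM~\ref{thm2.3}, after first checking that the transfer in THEOREM~\ref{thm3.1} is really an equivalence. Write $P(x)=a_0x^n+\cdots+a_n$. By the binomial rearrangement (3.5), $(x+1)f(x)=P(x)-P(-1)$. The hypothesis $\sum_{j=0}^{n}(-1)^{n-j+1}a_j=M\alpha^2$ says exactly $P(-1)=-M\alpha^2$. Hence $My^2=P(x)$ holds if and only if $M(y^2+\alpha^2)=(x+1)f(x)$. So suppose, for contradiction, that $(x,y)\in C(\mathbb{Z}^+)$ with $x>1$, $y\ge 1$. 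Then $(x,y)$ solves $M(y^2+\alpha^2)=(x+1)f(x)$, and it suffices to show this equation has no solutions with $x>1$, $y\ge 1$.

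That is the content of THEOREM~\ref{thm2.3}, whose coefficient hypotheses coincide with those assumed here. Its proof splits $x$ by residue mod $4$, and I would organise it around one arithmetic principle. No prime $q\equiv 3\pmod 4$ divides $M$, because all primes of $M$ are $\equiv 1\pmod 4$. No such $q$ divides $y^2+\alpha^2$: if $q\mid y^2+\alpha^2$ with $q\nmid\alpha$, then $-1$ would be a quadratic residue mod $q$, which is impossible; and $q\nmid\alpha$ holds by hypothesis on $\alpha$. This is the form of THEOREM~\ref{thm2.2} actually needed. Consequently $M(y^2+\alpha^2)$ is a positive integer with no prime factor $\equiv 3\pmod 4$ and not divisible by $4$, since $M$ and $\alpha$ are odd. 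It is therefore congruent to $1$ or $2\pmod 4$.

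The four cases then run as follows.
\begin{itemize}
\item If $x\equiv 3\pmod 4$, then $4\mid x+1$, which is impossible by the parity remark; this is LEMMA~\ref{lem2.1}.
\item If $x\equiv 2\pmod 4$, then $x+1\equiv 3\pmod 4$, so by THEOREM~\ref{thm2.1} it has a prime factor $q\equiv 3\pmod 4$, again a contradiction.
\item If $x\equiv 1\pmod 4$ or $x\equiv 0\pmod 4$, I would reduce $f(x)$ mod $4$ using the expansions (2.5) and (2.16). The two congruence hypotheses ($=4U-1$ and $=4k-1$) are designed to force $f(x)\equiv 3\pmod 4$ in these two cases. Then $f(x)$ has a prime factor $q\equiv 3\pmod 4$ by THEOREM~\ref{thm2.1}, giving the contradiction.
\end{itemize}
One point to record explicitly: since the left side is positive, $f(x)>0$ is forced, so ``$f(x)\equiv 3 \pmod 4$'' really yields a positive integer of the form $4B-1$ to which THEOREM~\ref{thm2.1} applies.

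The main obstacle is the last step: verifying that the stated coefficient conditions genuinely give $f(x)\equiv -1\pmod 4$ for every $x\equiv 0,1\pmod 4$. For $x\equiv 0$ one has $x+1\equiv 1$, so $f(x)\equiv \sum_j a_j\sum_{i<n-j}\binom{n-j}{i}(-1)^i=\sum_j a_j(-1)^{n-j+1}$, matching (2.15)–(2.17). For $x\equiv 1$ one has $x+1\equiv 2\pmod 4$, so only the terms with exponent $n-j-i-1\le 1$ survive mod $4$. I would redo this reduction carefully: compute the contributions of $i=n-j-1$ and $i=n-j-2$, and compare the result with the hypothesis $\sum_{j\le n-3}(-1)^{n-j-2}a_j(n-j)(n-j-2)+a_{n-1}=4U-1$, adjusting the bookkeeping if the $a_{n-2}$ term or the factor $2m+1$ enters differently. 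Once both congruences are confirmed, THEOREM~\ref{thm2.3} applies verbatim. Via the equivalence above, no $(x,y)$ with $x>1$, $y\ge1$ lies on $C$, which under the standing restriction on $(x,y)$ gives $C(\mathbb{Z}^+)=\emptyset$.
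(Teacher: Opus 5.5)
Your proposal is correct and is essentially the paper's own proof. You combine THEOREM~\ref{thm3.1} with the four-residue argument of THEOREM~\ref{thm2.3}. You also usefully sharpen the first into the equivalence $My^2=P(x)\iff M(y^2+\alpha^2)=(x+1)f(x)$ via $(x+1)f(x)=P(x)-P(-1)$ and $P(-1)=-M\alpha^2$, and you repair two steps the paper treats loosely: you use THEOREM~\ref{thm2.2} in its correct form (a prime $q\equiv 3\pmod 4$ dividing $y^2+\alpha^2$ must divide $\alpha$), and you note that $f(x)>0$ is forced. The $x\equiv 1\pmod 4$ reduction you deferred needs no adjustment: with $x+1=2u$, $u$ odd, the surviving terms $i=n-j-1$ and $i=n-j-2$ contribute $(-1)^{n-j}(n-j)(n-j-2)$ modulo $4$, which is $0$ for $j=n-2$ and $1$ for $j=n-1$, so $f(x)\equiv 4U-1\pmod 4$, exactly as the third hypothesis requires.
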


\begin{proof}
According to THEOREM~\ref{thm3.1}, let $C: My^2 = a_0 x^n + a_1 x^{n-1} + \cdots + a_n$ be an algebraic curve, nonsingular of genus $g\ge 0$, where $a_0,a_1,\dots,a_n\in\mathbb{Z}$ are the coefficients of the curve $C$, where $\alpha,M$ are odd numbers, all the prime divisors of which are of the form $p=4n+1$, and $C(\mathbb{Z}^+)$ are positive integer points. Then
\begin{equation}
\text{if } M(y^2+\alpha^2) \ne (x+1)f(x) \text{ then } (x,y)\notin C(\mathbb{Z}^+), \tag{3.7}
\end{equation}
where
\[
\sum_{j=0}^{n}(-1)^{n-j+1}a_j = M\alpha^2.
\]
\[
\sum_{j=0}^{n-1} a_j(-1)^{n-j+1} = 4k-1, \qquad \sum_{j=0}^{n-3}(-1)^{n-j-2}a_j(n-j)(n-j-2)+a_{n-1}=4U-1, \qquad U,K\ge0\in\mathbb{Z}.
\]
And according to THEOREM~\ref{thm2.3}, let $a_0,a_1,\dots,a_n,\alpha,M\in\mathbb{Z}$ be integers and $M,\alpha\ge 1$ be odd numbers, all prime divisors of $M,\alpha = p_1\cdot p_2\cdot p_3\cdots p_n$ of the form $p_j=4n+1$. Then we have
\begin{equation}
M(y^2+\alpha^2) \ne (x+1)f(x) \quad \text{for all } (x,y)\in(\mathbb{Z}^2)^+,\ x>1 \text{ and } y\ge 1, \tag{3.8}
\end{equation}
if
\[
\sum_{j=0}^{n}(-1)^{n-j+1}a_j = M\alpha^2, \qquad \sum_{j=0}^{n-1} a_j(-1)^{n-j+1} = 4k-1,
\]
\[
\sum_{j=0}^{n-3}(-1)^{n-j-2}a_j(n-j)(n-j-2)+a_{n-1} = 4U-1, \qquad U,K\ge 0\in\mathbb{Z}.
\]
We know $C(\mathbb{Z}^+) \subset (\mathbb{Z}^2)^+$; therefore, we conclude from equations (3.7) and (3.8) that if
\[
\sum_{j=0}^{n}(-1)^{n-j+1}a_j = M\alpha^2, \qquad \sum_{j=0}^{n-1} a_j(-1)^{n-j+1} = 4k-1,
\]
\[
\sum_{j=0}^{n-3}(-1)^{n-j-2}a_j(n-j)(n-j-2)+a_{n-1} = 4U-1, \qquad U,K\ge 0\in\mathbb{Z},
\]
then
\begin{equation}
M(y^2+\alpha^2) \ne (x+1)f(x) \quad \text{for all } (x,y)\in C(\mathbb{Z}^+),\ x>1 \text{ and } y\ge 1. \tag{3.9}
\end{equation}
Now from equations (3.9) and (3.7), this implies that
\[
(x,y)\notin C(\mathbb{Z}^+) \quad \text{for each } (x,y) > (1,0) \in (\mathbb{Z}^2)^+.
\]
Then
\begin{equation}
C(\mathbb{Z}^+) = \emptyset \quad \text{if } x>1 \text{ and } y\ge 1. \tag{3.10}
\end{equation}
Therefore, $C(\mathbb{Z}^+)=\emptyset$ if $(x,y)\in C(\mathbb{Z}^+)$ where $x>1,y\ge 1$, with the existence of the conditions on the coefficients. Then there is no solution for curve $C$ in the set of positive integers.

Thus, by (3.7), (3.9), (3.10), we conclude that if certain conditions are satisfied by the coefficients of the algebraic curve, then the curve has no solution if
\[
\sum_{j=0}^{n}(-1)^{n-j+1}a_j = M\alpha^2.
\]
\[
\sum_{j=0}^{n-1} a_j(-1)^{n-j+1} = 4k-1, \qquad \sum_{j=0}^{n-3}(-1)^{n-j-2}a_j(n-j)(n-j-2)+a_{n-1} = 4U-1, \qquad U,K\ge 0\in\mathbb{Z}.
\]
Then
\[
C(\mathbb{Z}^+) = \emptyset. \qedhere
\]
\end{proof}

\begin{theorem}\label{thm3.3}
Let $C: y^2 = a_0 x^n + a_1 x^{n-1} + \cdots + a_{n-1}x + a_n$ be an algebraic curve, nonsingular of genus $g\ge 0$, where $\deg C\ge 2$, with coefficients $a_0,a_1,a_2,\dots,a_n\in\mathbb{Z}$, and $C(\mathbb{Z}^+)$ are the positive integer solutions for curve $C$. Then, if
\[
\sum_{j=0}^{n}(-1)^{n-j+1}a_j = 1, \qquad \sum_{j=0}^{n-1} a_j(-1)^{n-j+1} = 4k-1,
\]
\[
\sum_{j=0}^{n-3}(-1)^{n-j-2}a_j(n-j)(n-j-2)+a_{n-1} = 4U-1, \qquad U,K\ge 0\in\mathbb{Z},
\]
and if $x>1$ and $y\ge 1$ where $\forall (x,y)\in C(\mathbb{Z}^+)$ holds. Consequently, the curve $C$ admits no solutions in positive integers. That is,
\[
C(\mathbb{Z}^+) = \emptyset.
\]
\end{theorem}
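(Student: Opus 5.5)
Theorem~\ref{thm3.3} should follow by specializing Theorem~\ref{thm3.2} to $M=\alpha=1$. The plan is to check that this choice satisfies every hypothesis of Theorem~\ref{thm3.2}, and then read off the conclusion.

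\emph{Hypotheses.} The integer $1$ is odd, positive, and has no prime divisors. So the requirement that all prime divisors of $M$ and $\alpha$ have the form $p=4n+1$ holds vacuously. With $M=\alpha=1$ the curve $My^2=a_0x^n+\cdots+a_n$ becomes exactly $C: y^2=a_0x^n+\cdots+a_n$. The condition $\sum_{j=0}^{n}(-1)^{n-j+1}a_j=M\alpha^2$ becomes $\sum_{j=0}^{n}(-1)^{n-j+1}a_j=1$, which is the first hypothesis of Theorem~\ref{thm3.3}. The two congruence conditions ($4k-1$ and $4U-1$) and the restriction $x>1$, $y\ge 1$ carry over verbatim. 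Theorem~\ref{thm3.2} then gives $C(\mathbb{Z}^+)=\emptyset$.

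\emph{Main obstacle.} The delicate point is whether the vacuous case $M=\alpha=1$ is genuinely covered by the chain of results behind Theorem~\ref{thm3.2}. I would trace it through Lemma~\ref{lem2.1} and Theorem~\ref{thm2.3} rather than rely on the specialization alone.
\begin{itemize}
\item \textbf{Reduction to a product.} Since $P(-1)=\sum_j a_j(-1)^{n-j}=-1$, where $P(x)=a_0x^n+\cdots+a_n$, the identity (3.5) from the proof of Theorem~\ref{thm3.1} turns $y^2=P(x)$ into $y^2+1=(x+1)f(x)$.
\item \textbf{Key arithmetic fact.} Every odd prime divisor of $y^2+1$ is $\equiv 1 \pmod 4$, and $4\nmid y^2+1$. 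This is the $\alpha=1$ instance of the Theorem~\ref{thm2.2} step, and it is a standard fact.
\item \textbf{Case $x\equiv 3\pmod 4$.} Here $4\mid x+1$, which is impossible since $4\nmid y^2+1$.
\item \textbf{Case $x\equiv 2\pmod 4$.} Here $x+1\equiv 3\pmod 4$, so by Theorem~\ref{thm2.1} it has a prime divisor $q\equiv 3\pmod 4$. This contradicts the key fact.
\item \textbf{Cases $x\equiv 0,1\pmod 4$.} The two coefficient conditions are designed to force $f(x)\equiv 3\pmod 4$. Then Theorem~\ref{thm2.1} again yields a prime $q\equiv 3\pmod 4$ dividing $y^2+1$, a contradiction.
\end{itemize}

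I would first invoke the specialization of Theorem~\ref{thm3.2}. Only as a sanity check would I recompute the residues $f(4m)\equiv\sum_{j<n}(-1)^{n-j+1}a_j$ and $f(4m+1)\pmod 4$. These residue computations are where I expect any difficulty to lie, since they carry the weight of the argument in the cases $x\equiv 0,1\pmod 4$.
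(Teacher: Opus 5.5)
Your proposal is correct and uses the same argument as the paper, which proves Theorem~\ref{thm3.3} purely by specializing Theorem~\ref{thm3.2}; the paper writes $K=M=\alpha=1$, but only $M=\alpha=1$ is needed, as in your version. Your mod-$4$ trace also checks out: the first condition gives $P(-1)=-1$, hence $y^2+1=(x+1)f(x)$ with $f(x)>0$ automatically, and the other two conditions say exactly that $f(4m)\equiv 3$ and $f(4m+1)\equiv 3 \pmod 4$, so Theorem~\ref{thm2.1} applies in those cases.
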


\begin{proof}
Let $K=M=\alpha=1$ in THEOREM~\ref{thm3.2}.
\end{proof}

\section{The Relationship Between Linear Algebra and Algebraic Geometry}

In this section, we investigate the relationship between solutions of systems of linear equations and solutions of algebraic curves. By THEOREM~\ref{thm3.2}, we prove that if a given linear system $S: Ax=b$ has a solution, then one can construct an algebraic curve whose coefficients encode precisely the solutions of that system, while the resulting curve has no positive integer solutions. We identify the conditions that the system $S$ must satisfy for such a curve to be constructed; these conditions are formulated in THEOREM 4.1. In addition, we explain how to construct distinct systems of linear equations, as described in THEOREM 4.2 and 4.3. We also construct linear systems with different prescribed properties, reflected in the entries of the vector $b$; the corresponding construction is presented in THEOREM 4.4. Finally, we show that if a linear system has infinitely many solutions, then there exist infinitely many algebraic curves having no positive integer solutions.

\begin{theorem}\label{thm4.1}
Let $S: Ax=b$ be a system of linear equations over $\mathbb{Z}$, and $A\in M_{3\times(n+1)}(\mathbb{Z})$. And $C_{s_i}\in\mathbb{Z}(x,y)$ be plane algebraic curves, and $M,k\ge 1\in\mathbb{N}$, $U,D\ge 0\in\mathbb{Z}$ where $M,k\ge 1$ are odd positive integers and every prime divisor of $M,k$ satisfies $p\equiv 1\pmod 4$. If the linear system $S$ is
\[
\sbmat{
(-1)^{n-2}a_{11}\epsilon_0 & (-1)^{n-3}a_{12}\epsilon_1 & \cdots & (-1)^{1}a_{1,n-2}\epsilon_{n-3} & 0 & a_{1n} & 0 \\
(-1)^{n+1}a_{21} & (-1)^{n}a_{22} & \cdots & (-1)^{3}a_{2,n-1} & (-1)^{2}a_{2n} & 0 & \\
(-1)^{n-1}a_{31} & (-1)^{n-2}a_{32} & \cdots & (-1)^{3}a_{3,n-1} & (-1)^{2}a_{3n} & (-1)^{1}a_{3,n+1} &
}
\sbmat{c_1\\c_2\\c_3\\ \vdots \\ \vdots \\ c_{n+1}}
=
\sbmat{4U-1\\4D-1\\Mk^2}
\]
Where $\epsilon_j = (n-j)(n-j-2)$, and any three elements in any column are equal, $a_{1j}=a_{2j}=a_{3j}$, and $n$ denotes the degree of the curve $C$, $\deg C = n\ge 2$. If there is a solution to the system of linear equations $S$, and the solution is $S=\{s_1,s_2,\dots,s_N\}\subset \mathbb{Z}^{n+1}$ where $s_i=(c_1,c_2,\dots,c_{n+1})$, then the following curve exists, nonsingular curves $C_{s_i}$ of genus $g\ge 0$, where
\[
C_{s_i}: My^2 = a_{31}c_1 x^n + a_{32}c_2 x^{n-1} + a_{33}c_3 x^{n-2} + \cdots + a_{3,n+1}c_{n+1}.
\]
If $y\ge 1$, $x>1$ and $\forall (x,y)\in C_{s_i}(\mathbb{Z}^+)$ holds then
\[
\big\{C_{s_i}(\mathbb{Z}^+)\big\}^{N}_{i=1} = \emptyset.
\]
\end{theorem}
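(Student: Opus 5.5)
The plan is to reduce the statement directly to THEOREM~\ref{thm3.2}. The idea is to read each solution $s_i=(c_1,\dots,c_{n+1})$ of $S$ as the coefficient vector of a curve, and to check that the three rows of $S$ are exactly the three coefficient conditions required there. Fix a solution $s_i$ and define the integers
\[
b_j := a_{3,j+1}\,c_{j+1}, \qquad j=0,1,\dots,n,
\]
so that $C_{s_i}: My^2 = b_0x^n + b_1x^{n-1}+\cdots+b_n$ has the shape of the curve in THEOREM~\ref{thm3.2}. The hypothesis $a_{1j}=a_{2j}=a_{3j}$ is what lets us replace every product $a_{r,j+1}c_{j+1}$ appearing in rows $r=1,2$ by the same $b_j$. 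We take $\alpha:=k$, which is odd and has all prime divisors $\equiv 1 \pmod 4$, as THEOREM~\ref{thm3.2} requires.

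Next we verify the three conditions row by row, by comparing signs and indices with those in THEOREM~\ref{thm3.2}.

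\emph{Third row.} Column $j+1$ carries the sign $(-1)^{n-j-1}=(-1)^{n-j+1}$, so the row reads
\[
\sum_{j=0}^{n}(-1)^{n-j+1}b_j = Mk^2 = M\alpha^2 .
\]

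\emph{Second row.} Column $j+1$ carries $(-1)^{n+1-j}$ for $j\le n-1$, and the last column is zero. The row therefore gives
\[
\sum_{j=0}^{n-1}(-1)^{n-j+1}b_j = 4D-1,
\]
which is the ``$4k-1$'' condition with $D$ in place of $k$.

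\emph{First row.} Column $j+1$ for $j\le n-3$ carries $(-1)^{n-j-2}\epsilon_j$ with $\epsilon_j=(n-j)(n-j-2)$. Column $n-1$ is $0$; this is consistent because $\epsilon_{n-2}=0$. Column $n$ carries $a_{1n}$, which contributes $b_{n-1}$, and column $n+1$ is $0$. The row therefore gives
\[
\sum_{j=0}^{n-3}(-1)^{n-j-2}\epsilon_j b_j + b_{n-1} = 4U-1 .
\]
The sign pattern in the displayed matrix is somewhat schematic, so we will state explicitly the reading of the column indices that makes these identities hold. We also note that the roles of $U$ and $k$ are interchanged relative to THEOREM~\ref{thm3.2}, which is harmless since both are arbitrary nonnegative integers.

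With the conditions verified, THEOREM~\ref{thm3.2} applied to $C_{s_i}$ with $(M,\alpha)=(M,k)$ gives $C_{s_i}(\mathbb{Z}^+)=\emptyset$ under the standing restriction $x>1$, $y\ge 1$. Since $s_i$ was an arbitrary element of the solution set, this holds for every $i=1,\dots,N$, and the union over $i$ is empty.

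The main obstacle is the bookkeeping in the first row. We have to match the shifted column index $j+1$ against the coefficient index $j$, and confirm that the zero entry in column $n-1$ and the entry $a_{1n}$ line up with the $b_{n-2}$ and $b_{n-1}$ terms of the condition in THEOREM~\ref{thm3.2}. We also have to make sure that the ``three equal entries per column'' hypothesis is used exactly so that the system's unknowns multiply the right coefficients. Beyond this, the argument simply invokes THEOREM~\ref{thm3.2}, and it inherits that theorem's hypotheses, including nonsingularity and $\deg C_{s_i}=n\ge 2$, which requires $b_0=a_{31}c_1\neq 0$ and which we assume.
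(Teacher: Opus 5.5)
Your proposal is correct and follows the paper's proof. The paper likewise sets $a_j=a_{1,j+1}c_{j+1}$, uses $a_{1j}=a_{2j}=a_{3j}$ to identify the three rows of $S$ with the three coefficient conditions of Theorem~\ref{thm3.2} (with $\alpha=k$ and $D$ in the role of the ``$4k-1$'' parameter), and then applies Theorem~\ref{thm3.2} to each solution $s_i$ in turn. Your row-by-row index bookkeeping is more careful than the paper's, and your explicit assumptions ($b_0\neq 0$, nonsingularity) only mirror hypotheses the paper also takes for granted; the mod-$4$ argument behind Theorem~\ref{thm3.2} does not actually use them.
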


\begin{proof}
According to THEOREM~\ref{thm3.2}, let $C: My^2 = a_0 x^n + a_1 x^{n-1} + a_2 x^{n-2} + \cdots + a_n$ be an algebraic curve, nonsingular of genus $g\ge 0$, where $a_0,a_1,\dots,a_n\in\mathbb{Z}$ are the coefficients of the curve, $\deg C = n\ge 2$, where $k,M$ are odd numbers, all the prime divisors of which are of the form $p=4m+1$, and $C(\mathbb{Z}^+)$ are positive integer points. Then, if
\[
\sum_{j=0}^{n}(-1)^{n-j+1}a_j = Mk^2, \qquad \sum_{j=0}^{n-1} a_j(-1)^{n-j+1} = 4D-1,
\]
\[
\sum_{j=0}^{n-3}(-1)^{n-j-2}a_j(n-j)(n-j-2)+a_{n-1} = 4U-1, \qquad U,D\ge 0\in\mathbb{Z},
\]
then there are no solutions in the set of positive integers for the curve $C$; in other words, $C(\mathbb{Z}^+)=\emptyset$ if $x>1$, $y\ge 1$ where $\forall (x,y)\in C(\mathbb{Z}^+)$.

Now, let $n$ be the degree of the curve $C$. Furthermore, let $a_0 = a_{11}c_1$ where $a_{11},c_1\in\mathbb{Z}$, and, similarly, let $a_1=a_{12}c_2$ where $a_{12},c_2\in\mathbb{Z}$. For all coefficients of the curve $C$ where $a_j = a_{1,j+1}c_{j+1}$, $j=1,2,3,\dots,n$ and $a_{1,j+1},c_{j+1}\in\mathbb{Z}$, we deduce the following:
\[
\sum_{j=0}^{n-3}(-1)^{n-j-2}a_{1,j+1}c_{j+1}(n-j)(n-j-2) + a_{1n}c_n = 4U-1,
\]
\[
\sum_{j=0}^{n}(-1)^{n-j+1}a_{3,j+1}c_{j+1} = Mk^2, \qquad \sum_{j=0}^{n-1} a_{2,j+1}c_{j+1}(-1)^{n-j+1} = 4D-1,
\]
where $a_{1,j+1}c_{j+1} = a_{2,j+1}c_{j+1} = a_{3,j+1}c_{j+1}$, and likewise,
\[
C: My^2 = a_{31}c_1 x^n + a_{32}c_2 x^{n-1} + a_{33}c_3 x^{n-2} + \cdots + a_{n+1}c_{n+1}.
\]

Now, we arrange all the coefficients of the curve $C$ into a matrix system $S: Ax=b$ where $A\in M_{3\times(n+1)}(\mathbb{Z})$, and thus we find
\[
\sbmat{
(-1)^{n-2}a_{11}\epsilon_0 & (-1)^{n-3}a_{12}\epsilon_1 & \cdots & (-1)^{1}a_{1,n-2}\epsilon_{n-3} & 0 & a_{1n} & 0 \\
(-1)^{n+1}a_{21} & (-1)^{n}a_{22} & \cdots & (-1)^{3}a_{2,n-1} & (-1)^{2}a_{2n} & 0 & \\
(-1)^{n-1}a_{31} & (-1)^{n-2}a_{32} & \cdots & (-1)^{3}a_{3,n-1} & (-1)^{2}a_{3n} & (-1)^{1}a_{3,n+1} &
}
\sbmat{c_1\\c_2\\c_3\\ \vdots \\ \vdots \\ c_{n+1}}
=
\sbmat{4U-1\\4D-1\\Mk^2},
\]
where $\epsilon_j=(n-j)(n-j-2)$, and any three elements in any column are equal, $a_{1j}=a_{2j}=a_{3j}$, and $n$ denotes the degree of the curve $C$, $\deg C=n$. This means that if there is a solution to the system of linear equations $S$, and the solution is $s_1=(c_1,c_2,\dots,c_{n+1})\subset\mathbb{Z}^{n+1}$, then the following curve exists:
\[
C_{s_1}: My^2 = a_{31}c_1x^n + a_{32}c_2x^{n-1} + a_{33}c_3x^{n-2} + \cdots + a_{3,n+1}c_{n+1}.
\]
Then, by THEOREM~\ref{thm3.2}, the coefficients of the curve satisfy the criterion. From the conditions, we deduce that if $y\ge 1$, $x>1$ and $\forall(x,y)\in C_{s_1}(\mathbb{Z}^+)$ holds, then
\[
C_{s_1}(\mathbb{Z}^+) = \emptyset.
\]
Now assume that there exists another solution to the system with $s_2=(c_1,c_2,\dots,c_{n+1})$. Consequently, we obtain a new curve $C_{s_2}$ where
\[
C_{s_2}: My^2 = a_{31}c_1x^n + a_{32}c_2x^{n-1} + a_{33}c_3x^{n-2}+\cdots+a_{3,n+1}c_{n+1}.
\]
Then the coefficients arising from this solution satisfy the hypotheses of THEOREM~\ref{thm3.2}. Hence we deduce that if $y\ge 1$, $x>1$ and $\forall(x,y)\in C_{s_2}(\mathbb{Z}^+)$ holds then
\[
C_{s_2}(\mathbb{Z}^+) = \emptyset.
\]
Accordingly, if the system $S$ admits $N$ solutions with $S=\{s_1,s_2,\dots,s_N\}\subset\mathbb{Z}^{n+1}$, this implies the existence of $N$ algebraic curves $C_{s_i}$; if $y\ge 1$, $x>1$ and $\forall(x,y)\in C_{s_i}(\mathbb{Z}^+)$ holds then
\[
\big\{C_{s_i}(\mathbb{Z}^+)\big\}^{N}_{i=1} = \emptyset. \qedhere
\]
\end{proof}

\begin{lemma}\label{lem4.1}
Let $S: Ax=b$ be a system of linear equations over $\mathbb{Z}$, and $A\in M_{3\times(n+1)}(\mathbb{Z})$. And $C_{s_i}\in\mathbb{Z}(x,y)$ be plane algebraic curves, and $K\ge 1\in\mathbb{N}$, $U,D\ge 0\in\mathbb{Z}$, where $K$ is an odd positive integer and every prime divisor of $K$ satisfies $p\equiv 1\pmod 4$. If the linear system $S$ is
\[
\sbmat{
(-1)^{n-2}a_{11}\epsilon_0 & (-1)^{n-3}a_{12}\epsilon_1 & \cdots & (-1)^{1}a_{1,n-2}\epsilon_{n-3} & 0 & a_{1n} & 0 \\
(-1)^{n+1}a_{21} & (-1)^{n}a_{22} & \cdots & (-1)^{3}a_{2,n-1} & (-1)^{2}a_{2n} & 0 & \\
(-1)^{n-1}a_{31} & (-1)^{n-2}a_{32} & \cdots & (-1)^{3}a_{3,n-1} & (-1)^{2}a_{3n} & (-1)^{1}a_{3,n+1} &
}
\sbmat{c_1\\c_2\\c_3\\ \vdots \\ \vdots \\ c_{n+1}}
=
\sbmat{4U-1\\4D-1\\K^2},
\]
where $\epsilon_j=(n-j)(n-j-2)$, and any three elements in any column are equal, $a_{1j}=a_{2j}=a_{3j}$, and $n$ denotes the degree of the curve $C$, $\deg C=n\ge 2$. If there is a solution to the system of linear equations $S$, and the solution is $S=\{s_1,s_2,\dots,s_N\}\subset\mathbb{Z}^{n+1}$
where $s_i=(c_1,c_2,\dots,c_{n+1})$, then the following curve exists, nonsingular curves $C_{s_i}$ of genus $g\ge 0$, where
\[
C_{s_i}: y^2 = a_{31}c_1x^n + a_{32}c_2x^{n-1} + a_{33}c_3x^{n-2}+\cdots+a_{3,n+1}c_{n+1}.
\]
If $y\ge 1$, $x>1$ and $\forall(x,y)\in C_{s_i}(\mathbb{Z}^+)$ holds then
\[
\big\{C_{s_i}(\mathbb{Z}^+)\big\}^{N}_{i=1} = \emptyset.
\]
\end{lemma}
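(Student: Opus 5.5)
The plan is to obtain LEMMA~\ref{lem4.1} as the special case $M=1$, $k=K$ of THEOREM~\ref{thm4.1}, rather than redoing the construction. Setting $M=1$ turns the curve $My^2=\cdots$ of THEOREM~\ref{thm4.1} into $y^2=\cdots$, and the right-hand side vector $(4U-1,\,4D-1,\,Mk^2)^{T}$ into $(4U-1,\,4D-1,\,K^2)^{T}$. The coefficient matrix $A$ is unchanged: the same $\epsilon_j=(n-j)(n-j-2)$, the same sign pattern, and $a_{1j}=a_{2j}=a_{3j}$. So the system $S$ of the lemma is literally the system of THEOREM~\ref{thm4.1} with these parameter values, and it has the same solution set $\{s_1,\dots,s_N\}$.

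The first step is to verify that the specialization satisfies the hypotheses of THEOREM~\ref{thm4.1}. The integer $M=1$ is odd and positive. It has no prime divisors, so the requirement that every prime divisor satisfy $p\equiv 1\pmod 4$ holds vacuously. The integer $K$ is odd and positive with all prime divisors $\equiv 1 \pmod 4$ by assumption, so it may play the role of $k$. This is the same move the paper makes in THEOREM~\ref{thm3.3}, which sets $M=\alpha=1$ in THEOREM~\ref{thm3.2}.

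The second step, for each solution $s_i=(c_1,\dots,c_{n+1})$, is to set $a_j=a_{3,j+1}c_{j+1}$ and read off the three rows of $S$. Using $a_{1j}=a_{2j}=a_{3j}$, they give exactly:
\begin{itemize}
\item[(i)] $\sum_{j=0}^{n-3}(-1)^{n-j-2}a_j(n-j)(n-j-2)+a_{n-1}=4U-1$;
\item[(ii)] $\sum_{j=0}^{n-1}(-1)^{n-j+1}a_j=4D-1$;
\item[(iii)] $\sum_{j=0}^{n}(-1)^{n-j+1}a_j=K^2=1\cdot K^2$.
\end{itemize}
These are precisely the coefficient conditions of THEOREM~\ref{thm3.2} with $M=1$ and $\alpha=K$.

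It then follows, either from THEOREM~\ref{thm4.1} directly or from THEOREM~\ref{thm3.2} applied curve by curve, that $C_{s_i}(\mathbb{Z}^+)=\emptyset$ under the standing restriction $x>1$, $y\ge 1$. Taking this over $i=1,\dots,N$ gives $\{C_{s_i}(\mathbb{Z}^+)\}_{i=1}^N=\emptyset$.

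The main obstacle is bookkeeping rather than a new idea. The third row must reproduce condition (iii) with the correct signs and indices, in particular the last entry $(-1)^{1}a_{3,n+1}$ and the zero entries in rows one and two. The value $M=1$ must also be admissible in THEOREM~\ref{thm2.3}, where $M\not\equiv 0\pmod q$ is used; this is immediate for $M=1$. If the index alignment between the matrix columns and $a_j=a_{3,j+1}c_{j+1}$ turns out to be off by one anywhere, I would fall back on re-deriving rows (i)--(iii) directly from the definitions, exactly as in the proof of THEOREM~\ref{thm4.1}, and then cite THEOREM~\ref{thm3.3} with $\alpha=K$.
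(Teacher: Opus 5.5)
Your proposal is correct and is exactly the paper's argument: the paper proves this lemma in one line by setting $M=1$ in THEOREM~\ref{thm4.1}, and your checks (that $M=1$ satisfies the prime-divisor hypothesis vacuously, that $K$ plays the role of $k$, and that the three rows reproduce the coefficient conditions of THEOREM~\ref{thm3.2} with $M=1$, $\alpha=K$) just make that specialization explicit. One small slip: your fallback should cite THEOREM~\ref{thm3.2} with $M=1$, $\alpha=K$, not THEOREM~\ref{thm3.3}, since the latter fixes $\alpha=1$ and so covers only the case $K=1$.
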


\begin{proof}
Let $M=1$ in THEOREM~\ref{thm4.1}.
\end{proof}

\begin{theorem}\label{thm4.2}
Let $S: Ax=b$ be a system of linear equations over $\mathbb{Z}$ and $A\in M_{6\times(2m+2)}(\mathbb{Z})$. Let $C_{s_i},G_{s_i}\in\mathbb{Z}(x,y)$ be plane algebraic curves, nonsingular curves of genus $g\ge 0$, and $v,u,d,t\ge 0\in\mathbb{Z}$ and $M,a,D,k\ge 1$ be positive odd integers such that every prime divisor of each satisfies $p\equiv 1\pmod 4$. If the linear system $S$ is
\[
\sbmat{
-a_{11}\epsilon_0 & a_{12}\epsilon_1 & -a_{13}\epsilon_2 & \cdots & -a_{1,2m-1}\epsilon_{2m-2} & 0 & a_{1,2m+1} & 0\\
a_{21} & a_{22} & -a_{23} & \cdots & -a_{2,2m} & a_{2,2m+1} & 0 & \\
a_{31} & -a_{32} & a_{33} & \cdots & a_{3,2m-1} & -a_{3,2m} & a_{3,2m+1} & -a_{3,2m+2}\\
a_{41}\varepsilon_0 & -a_{42}\varepsilon_1 & a_{43}\varepsilon_2 & \cdots & -a_{4,2m-2}\varepsilon_{2m-3} & 0 & a_{4,2m} & 0 & 0\\
-a_{51} & a_{52} & -a_{53} & \cdots & -a_{5,2m-1} & a_{5,2m} & 0 & 0\\
-a_{61} & a_{62} & -a_{63} & \cdots & -a_{6,2m-1} & a_{6,2m} & -a_{6,2m+1} & 0
}
\sbmat{c_1\\c_2\\c_3\\ \vdots\\ \vdots\\ \vdots\\ c_{2m+2}}
=
\sbmat{4d-1\\4v-1\\Ma^2\\4t-1\\4u-1\\Dk^2}
\]
where $\epsilon_j = (2m+1-j)(2m-j-1)$ and $\varepsilon_j = (2m-j)(2m-j-2)$, and there exist two algebraic curves of degree $\deg C_{s_i}=2m$ and $\deg G_{s_i}=2m+1$, and every three consecutive elements in any column are equal, $a_{1j}=a_{2j}=a_{3j}$, and $a_{4j}=a_{5j}=a_{6j}$. Let $\{s_1,s_2,s_3,\dots,s_N\}\subset\mathbb{Z}^{2m+2}$ denote the set of integer solutions of the system of linear equations $S$. Then for every solution $s_i=(c_1,c_2,\dots,c_{2m+2})$ there exist two algebraic curves $C_{s_i}$ and $G_{s_i}$ where
\[
G_{s_i}: MY^2 = a_{31}c_1X^{2m+1} + a_{32}c_2X^{2m} + a_{33}c_3X^{2m-1} + \cdots + a_{3,2m+2}c_{2m+2},
\]
\[
C_{s_i}: DY^2 = a_{61}c_1X^{2m} + a_{62}c_2X^{2m-1} + a_{63}c_3X^{2m-2} + \cdots + a_{6,2m+1}c_{2m+1}.
\]
If $y\ge 1$, $x>1$ and $\forall(x,y)\in C_{s_i}(\mathbb{Z}^+)$ and $\forall(x,y)\in G_{s_i}(\mathbb{Z}^+)$ holds then
\[
\big\{C_{s_i}(\mathbb{Z}^+)\big\}^{N}_{i=1} = \emptyset, \qquad \big\{G_{s_i}(\mathbb{Z}^+)\big\}^{N}_{i=1} = \emptyset.
\]
\end{theorem}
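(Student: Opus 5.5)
The plan is to reduce THEOREM~\ref{thm4.2} to two separate applications of THEOREM~\ref{thm4.1} (equivalently, of THEOREM~\ref{thm3.2}). The $6\times(2m+2)$ system is really two $3$-row blocks stacked on top of each other. Rows $1$--$3$ encode the three coefficient conditions of THEOREM~\ref{thm3.2} for the curve $G_{s_i}$, of odd degree $n=2m+1$, with constant $M\alpha^2=Ma^2$. Rows $4$--$6$ encode the same conditions for $C_{s_i}$, of even degree $n=2m$, with constant $Dk^2$; this block ignores the variable $c_{2m+2}$.

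\textbf{Step 1: identify the coefficients.} Fix a solution $s_i=(c_1,\dots,c_{2m+2})$. For $G_{s_i}$ put $a_j=a_{3,j+1}c_{j+1}$ for $j=0,\dots,2m+1$. For $C_{s_i}$ put $a_j=a_{6,j+1}c_{j+1}$ for $j=0,\dots,2m$. Because $a_{1j}=a_{2j}=a_{3j}$ and $a_{4j}=a_{5j}=a_{6j}$, the entries in rows $1,2$ (respectively $4,5$) multiply the same curve coefficients as row $3$ (respectively row $6$).

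\textbf{Step 2: check the signs and weights.} For $n=2m+1$, the sign $(-1)^{n-j-2}=(-1)^{j+1}$ gives the alternating pattern $-,+,-,\dots$ starting at $j=0$. The weights are $(n-j)(n-j-2)=(2m+1-j)(2m-1-j)=\epsilon_j$, and the final term is $+a_{n-1}=a_{1,2m+1}c_{2m+1}$, matching row $1$. Similarly, $(-1)^{n-j+1}=(-1)^{j}$ in the sum over $j\le n-1$ gives row $2$, and the sum over $j\le n$ gives row $3$. For $n=2m$ the signs flip: $(-1)^{n-j-2}=(-1)^j$ and $(-1)^{n-j+1}=(-1)^{j+1}$. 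This matches rows $4$--$6$, with $\varepsilon_j=(2m-j)(2m-j-2)$.

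\textbf{Step 3: read off the hypotheses.} The right-hand sides $4d-1$, $4v-1$, $Ma^2$ and $4t-1$, $4u-1$, $Dk^2$ are exactly the hypotheses of THEOREM~\ref{thm3.2}. There, $U=d$, $K=v$, $\alpha=a$ for $G_{s_i}$, and $U=t$, $K=u$, $\alpha=k$ for $C_{s_i}$. The assumption that $M,a,D,k$ are odd with all prime divisors $\equiv 1\pmod 4$ supplies the remaining conditions. In particular, the leading coefficient $D$ of $Y^2$ in $C_{s_i}$ plays the role of $M$ in THEOREM~\ref{thm3.2}.

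\textbf{Step 4: conclude.} Applying THEOREM~\ref{thm3.2} (as packaged in THEOREM~\ref{thm4.1}) to each block gives $G_{s_i}(\mathbb{Z}^+)=\emptyset$ and $C_{s_i}(\mathbb{Z}^+)=\emptyset$ under $x>1$, $y\ge1$. Letting $i$ range over $1,\dots,N$ yields both families.

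\textbf{Main obstacle.} The hard part is the careful sign and index bookkeeping in Step 2. The given matrix has to agree with the exponents $(-1)^{n-j-2}$ and $(-1)^{n-j+1}$ for both parities of $n$. It also has to place the zero entries correctly: the $a_{n-2}$ slot in the first condition, and the trailing columns $c_{2m+1},c_{2m+2}$ in rows $1,2,4,5,6$. I would verify this entry by entry for small $m$ (say $m=1$, with degrees $2$ and $3$) and then argue the general pattern by parity. Any sign mismatch there would have to be absorbed by renaming the $a_{ij}$, which the hypothesis of equal column entries permits.
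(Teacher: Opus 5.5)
Your proposal is correct and is essentially the paper's own argument. The paper also treats the $6\times(2m+2)$ system as two stacked THEOREM~\ref{thm4.1} blocks (degree $2m+1$ with right-hand side $Ma^2$, degree $2m$ with $Dk^2$) and applies THEOREM~\ref{thm4.1}, hence THEOREM~\ref{thm3.2}, to each block for every solution $s_i$; your explicit matching ($U=d$, $K=v$, $\alpha=a$ for $G_{s_i}$; $U=t$, $K=u$, $\alpha=k$ with $D$ in place of $M$ for $C_{s_i}$) is cleaner than the paper's proof, which at one point swaps the roles of $M$ and $D$.

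One correction to your closing remark: as printed, row $2$ begins $a_{21}, a_{22}, -a_{23}$, but the required $(-1)^j$ pattern is $a_{21}, -a_{22}, a_{23}$. This is a typo in the statement and should simply be corrected. It cannot be absorbed by renaming, because changing the sign of $a_{i2}$ (or of $c_2$) flips all three rows of that column at once.
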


\begin{proof}
By THEOREM~\ref{thm4.1}, let $S: Ax=b$ be a system of linear equations over $\mathbb{Z}$, and $A\in M_{3\times(n+1)}(\mathbb{Z})$. And $C_{s_i}\in\mathbb{Z}(x,y)$ be plane algebraic curves, nonsingular, genus $g\ge 0$ with integer coefficients, and $M,k\ge 1\in\mathbb{N}$, $U,D\ge 0\in\mathbb{Z}$, where $M$ and $k$ are odd positive integers and every prime divisor of $M,k$ satisfies $p\equiv 1\pmod 4$. If the linear system $S$ is
\[
\sbmat{
(-1)^{n-2}a_{11}\epsilon_0 & (-1)^{n-3}a_{12}\epsilon_1 & \cdots & (-1)^{1}a_{1,n-2}\epsilon_{n-3} & 0 & a_{1n} & 0 \\
(-1)^{n+1}a_{21} & (-1)^{n}a_{22} & \cdots & (-1)^{3}a_{2,n-1} & (-1)^{2}a_{2n} & 0 & \\
(-1)^{n-1}a_{31} & (-1)^{n-2}a_{32} & \cdots & (-1)^{3}a_{3,n-1} & (-1)^{2}a_{3n} & (-1)^{1}a_{3,n+1} &
}
\sbmat{c_1\\c_2\\c_3\\ \vdots \\ \vdots \\ c_{n+1}}
=
\sbmat{4U-1\\4D-1\\MK^2},
\]
where $\epsilon_j=(n-j)(n-j-2)$, and any three elements in any column are equal, $a_{1j}=a_{2j}=a_{3j}$, and $n$ denotes the degree of the curve $C$, $\deg C=n$. If there is a solution to the system of linear equations $S$, and the solution is $S=\{s_1,s_2,\dots,s_N\}\subset\mathbb{Z}^{n+1}$ where $s_i=(c_1,c_2,\dots,c_{n+1})$, then the following curve exists:
\[
C_{s_i}: My^2 = a_{31}c_1x^n + a_{32}c_2x^{n-1} + a_{33}c_3x^{n-2}+\cdots+a_{3,n+1}c_{n+1}.
\]
If $y\ge 1$, $x>1$ and $\forall(x,y)\in C_{s_i}(\mathbb{Z}^+)$ holds then
\[
\big\{C_{s_i}(\mathbb{Z}^+)\big\}^{N}_{i=1} = \emptyset.
\]

Now let $C_1$ and $G_2 \in \mathbb{Z}(x,y)$ be two curves. Using the coefficients of the curves $C_1,G_2$, one can construct the linear systems $S: Ax=b$ and $K: Bx=d$, where $A\in M_{3\times(2m+2)}(\mathbb{Z})$ and $B\in M_{3\times 2m}(\mathbb{Z})$, where $\deg C=2m$ and $\deg G=2m+1$. Now suppose that $S$ and $K$ admit a common solution. Then the two systems may be amalgamated into a single linear system $F: Tx=q$ where $T\in M_{6\times(2m+1)}(\mathbb{Z})$. Consequently, we obtain the system $F$:
\[
\sbmat{
-a_{11}\epsilon_0 & a_{12}\epsilon_1 & -a_{13}\epsilon_2 & \cdots & -a_{1,2m-1}\epsilon_{2m-2} & 0 & a_{2m+1} & 0\\
a_{21} & a_{22} & -a_{23} & \cdots & -a_{2,2m} & a_{2,2m+1} & 0 & \\
a_{31} & -a_{32} & a_{33} & \cdots & a_{3,2m-1} & -a_{3,2m} & a_{3,2m+1} & -a_{3,2m+2}\\
a_{41}\varepsilon_0 & -a_{42}\varepsilon_1 & a_{43}\varepsilon_2 & \cdots & -a_{4,2m-2}\varepsilon_{2m-3} & 0 & a_{4,2m} & 0 & 0\\
-a_{51} & a_{52} & -a_{53} & \cdots & -a_{5,2m-1} & a_{5,2m} & 0 & 0\\
-a_{61} & a_{62} & -a_{63} & \cdots & -a_{6,2m-1} & a_{6,2m} & -a_{6,2m+1} & 0
}
\sbmat{c_1\\c_2\\c_3\\ \vdots\\ \vdots\\ \vdots\\ c_{2m+2}}
=
\sbmat{4d-1\\4v-1\\Ma^2\\4t-1\\4u-1\\Dk^2}
\]
where $\epsilon_j=(2m+1-j)(2m-j-1)$ and $\varepsilon_j=(2m-j)(2m-j-2)$, with the condition that any three consecutive elements in any column are equal, $a_{1j}=a_{2j}=a_{3j}$ and $a_{4j}=a_{5j}=a_{6j}$, where $d,v,t,u\ge 0\in\mathbb{Z}$ and $M,D,a,k\ge 1\in\mathbb{N}$. Moreover, $M,D,a,k$ are odd positive integers, and in addition every prime divisor of them satisfies $p\equiv 1\pmod 4$. There exist two curves of degrees $\deg C=2m$ and $\deg G=2m+1$. By THEOREM~\ref{thm4.1}, if the linear system of equations $F$ admits a solution, where $s_1\subset\mathbb{Z}^{2m+2}$ denote the
solution of the linear system $F$, and $s_1=(c_1,c_2,\dots,c_{2m+2})$, then there exist two algebraic curves of degree $\deg G_{s_i}=2m+1$ and $\deg C_{s_i}=2m$ where
\[
C_{s_1}: MY^2 = a_{31}c_1X^{2m} + a_{32}c_2X^{2m-1} + a_{33}c_3X^{2m-2}+\cdots+a_{3,2m+1}c_{2m+1},
\]
\[
G_{s_1}: DY^2 = a_{61}c_1X^{2m+1} + a_{62}c_2X^{2m} + a_{63}c_3X^{2m-1}+\cdots+a_{6,2m+2}c_{2m+2}.
\]
In the linear system $F$, we observe that the coefficients of the curves $C_{s_i}$ and $G_{s_i}$ satisfy the hypotheses of THEOREM~\ref{thm4.1}. Suppose that the system $F$ admits a solution, which we denote by $s_1=(c_1,c_2,\dots,c_{2m+2})$. We deduce from this that, if $y\ge 1$, $x>1$ and $\forall(x,y)\in C_{s_i}(\mathbb{Z}^+)$ and $\forall(x,y)\in G_{s_i}(\mathbb{Z}^+)$ holds then
\[
C_{s_1}(\mathbb{Z}^+) = G_{s_1}(\mathbb{Z}^+) = \emptyset.
\]
Suppose now that there exist $N$ solutions of the form $\{s_1,s_2,s_3,\dots,s_N\}\subset\mathbb{Z}^{2m+2}$ to the linear system $F$. Let $F=S$ and $T=A$, $B=q$. By THEOREM~\ref{thm4.1}, we may therefore construct $N$ curves whose coefficients all satisfy the hypotheses of THEOREM~\ref{thm3.2}. If $S=\{s_1,s_2,s_3,\dots,s_N\}\subset\mathbb{Z}^{2m+2}$ and $s_i=(c_1,c_2,\dots,c_{2m+2})\in S$, hence
\[
C_{s_i}: MY^2 = a_{31}c_1X^{2m} + a_{32}c_2X^{2m-1} + a_{33}c_3X^{2m-2}+\cdots+a_{3,2m+1}c_{2m+1},
\]
\[
G_{s_i}: DY^2 = a_{61}c_1X^{2m+1} + a_{62}c_2X^{2m} + a_{63}c_3X^{2m-1}+\cdots+a_{6,2m+2}c_{2m+2},
\]
if $y\ge 1$, $x>1$ and $\forall(x,y)\in C_{s_i}(\mathbb{Z}^+)$ and $\forall(x,y)\in G_{s_i}(\mathbb{Z}^+)$ holds then
\[
\big\{C_{s_i}(\mathbb{Z}^+)\big\}^{N}_{i=1} = \emptyset, \qquad \big\{G_{s_i}(\mathbb{Z}^+)\big\}^{N}_{i=1} = \emptyset. \qedhere
\]
\end{proof}

\begin{theorem}\label{thm4.3}
Let $S: Ax=b$ be a system of linear equations over $\mathbb{Z}$ and $A\in M_{9\times(2m+2)}(\mathbb{Z})$. Let $C_{s_i},G_{s_i},V_{s_i}\in\mathbb{Z}(x,y)$ be plane algebraic curves, nonsingular curves of genus $g\ge 0$, where $v,u,d,t,s,q\ge 0\in\mathbb{Z}$. If the linear system $S$ is
\[
\sbmat{
-a_{11}\epsilon_0 & a_{12}\epsilon_1 & -a_{13}\epsilon_2 & \cdots & -a_{1,2m-1}\epsilon_{2m-2} & 0 & a_{2m+1} & 0\\
a_{21} & a_{22} & -a_{23} & \cdots & -a_{2,2m} & a_{2,2m+1} & 0 & \\
a_{31} & -a_{32} & a_{33} & \cdots & a_{3,2m-1} & -a_{3,2m} & a_{3,2m+1} & -a_{3,2m+2}\\
a_{41}\varepsilon_0 & -a_{42}\varepsilon_1 & a_{43}\varepsilon_2 & \cdots & -a_{4,2m-2}\varepsilon_{2m-3} & 0 & a_{4,2m} & 0 & 0\\
-a_{51} & a_{52} & -a_{53} & \cdots & -a_{5,2m-1} & a_{5,2m} & 0 & 0\\
-a_{61} & a_{62} & -a_{63} & \cdots & -a_{6,2m-1} & a_{6,2m} & -a_{6,2m+1} & 0\\
-a_{71} & a_{72} & -a_{73} & \cdots & -a_{7,2t-1} & a_{7,2t} & 0 & 0 & 0 & 0 & 0\\
a_{81}\eta_0 & -a_{82}\eta_1 & a_{83}\eta_2 & \cdots & -a_{8,2t-2}\eta_{2t-3} & 0 & a_{8,2t} & 0 & 0 & 0 & 0\\
-a_{91} & a_{92} & -a_{93} & \cdots & -a_{9,2t-1} & -a_{9,2t+1} & 0 & 0 & 0 & 0 & 0
}
\sbmat{c_1\\c_2\\c_3\\c_4\\c_5\\ \vdots\\ \vdots\\ \vdots\\ c_{2m+2}}
=
\sbmat{4d-1\\4v-1\\1\\4t-1\\4u-1\\1\\4s-1\\4q-1\\1}
\]
where $\epsilon_j=(2m+1-j)(2m-j-1)$, $\varepsilon_j=(2m-j)(2m-j-2)$, and $\eta_j=(2t-j)(2t-j-2)$, where $\deg G_{s_i}=2m+1$, $\deg C_{s_i}=2m$, $\deg V_{s_i}=2t$, and $2m+1>2m>2t$. With the condition that any three consecutive elements in any column are equal, $a_{1j}=a_{2j}=a_{3j}$, $a_{4j}=a_{5j}=a_{6j}$, and $a_{7j}=a_{8j}=a_{9j}$. If the linear system of equations $S: Ax=b$ where $A\in M_{9\times(2m+2)}(\mathbb{Z})$ admits a solution, where $\{s_1,s_2,s_3,\dots,s_N\}\subset\mathbb{Z}^{2m+2}$ denotes the solution set of the linear system, and $s_i=(c_1,c_2,c_3,\dots,c_{2m+2})$, then there exist three algebraic curves
\[
G_{s_i}: y^2 = a_{31}c_1x^{2m+1}+a_{32}c_2x^{2m}+a_{33}c_3x^{2m-1}+\cdots+a_{3,2m+2}c_{2m+2},
\]
\[
C_{s_i}: y^2 = a_{61}c_1x^{2m}+a_{62}c_2x^{2m-1}+a_{63}c_3x^{2m-2}+\cdots+a_{6,2m+1}c_{2m+1},
\]
\[
V_{s_i}: y^2 = a_{91}c_1x^{2t}+a_{92}c_2x^{2t-1}+\cdots+a_{9,2t+1}c_{2t+1}.
\]
If $y\ge 1$, $x>1$ and $\forall(x,y)\in C_{s_i}(\mathbb{Z}^+)$ and $\forall(x,y)\in G_{s_i}(\mathbb{Z}^+)$ and $\forall(x,y)\in V_{s_i}(\mathbb{Z}^+)$ holds then
\[
\big\{C_{s_i}(\mathbb{Z}^+)\big\}^{N}_{i=1} = \emptyset, \qquad \big\{G_{s_i}(\mathbb{Z}^+)\big\}^{N}_{i=1} = \emptyset, \qquad \big\{V_{s_i}(\mathbb{Z}^+)\big\}^{N}_{i=1} = \emptyset.
\]
\end{theorem}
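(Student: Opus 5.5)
The plan is to reduce the statement block by block to THEOREM~\ref{thm3.3}, the case $M=\alpha=1$ of THEOREM~\ref{thm3.2}. This follows the way THEOREM~\ref{thm4.2} was obtained from THEOREM~\ref{thm4.1}, and LEMMA~\ref{lem4.1} from THEOREM~\ref{thm4.1} with $M=1$.

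First I split the $9\times(2m+2)$ system $S$ into its three row-blocks: rows $1$--$3$, rows $4$--$6$ and rows $7$--$9$. Since every entry of $b$ in positions $3,6,9$ equals $1=1\cdot 1^2$, each block is an instance of the system of LEMMA~\ref{lem4.1} with $K=1$. The three instances have degrees $n=2m+1$, $n=2m$ and $n=2t$ respectively. Columns beyond the relevant degree carry zero entries, so each block only involves $c_1,\dots,c_{n+1}$. A common solution $s_i=(c_1,\dots,c_{2m+2})$ of $S$ therefore restricts to a solution of each of the three subsystems. For example, rows $7$--$9$ only see $(c_1,\dots,c_{2t+1})$, because $2t<2m$ and the remaining columns are zero.

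Next, for each block I set $a_j:=a_{3,j+1}c_{j+1}$ (respectively $a_{6,j+1}c_{j+1}$, $a_{9,j+1}c_{j+1}$). I use the column condition $a_{1j}=a_{2j}=a_{3j}$, and its analogues for the other blocks, to identify all three rows of a block with the same curve coefficients. I then read off the three hypotheses of THEOREM~\ref{thm3.3}:
\[
\sum_{j=0}^{n}(-1)^{n-j+1}a_j=1,\qquad \sum_{j=0}^{n-1}(-1)^{n-j+1}a_j=4v-1,
\]
\[
\sum_{j=0}^{n-3}(-1)^{n-j-2}a_j(n-j)(n-j-2)+a_{n-1}=4d-1.
\]
The first is the third row of the block, the second is the row with right-hand side $4v-1$ (or $4u-1$, $4q-1$), and the third is the row weighted by $\epsilon_j$, $\varepsilon_j$ or $\eta_j$, with $\epsilon_j=(2m+1-j)(2m-j-1)$ matching $(n-j)(n-j-2)$ for $n=2m+1$, and similarly for $\varepsilon_j$ with $n=2m$ and $\eta_j$ with $n=2t$. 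THEOREM~\ref{thm3.3} then gives $G_{s_i}(\mathbb{Z}^+)=C_{s_i}(\mathbb{Z}^+)=V_{s_i}(\mathbb{Z}^+)=\emptyset$ under $x>1$, $y\ge1$. Letting $i$ run over $1,\dots,N$ gives the three families, exactly as in the last step of THEOREM~\ref{thm4.1}.

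The main obstacle is the bookkeeping in the identification step. The displayed sign patterns are written with explicit $\pm$ signs instead of the $(-1)^{n-j+1}$ form of THEOREM~\ref{thm4.1}. I must check, separately for odd $n=2m+1$ and even $n=2m,2t$, that each row's signs agree with $(-1)^{n-j+1}$, $(-1)^{n-j}$ and $(-1)^{n-j-2}$ respectively. I must also check that the row placement of the $4d-1$ versus $4v-1$ conditions matches the roles of $4U-1$ and $4k-1$ in THEOREM~\ref{thm3.3}. I would do this by writing out the first two and last two columns for each parity. Where a displayed sign disagrees, I would absorb it into the integer $a_{\ell j}$, which the column-equality condition permits, so that each block literally becomes an instance of LEMMA~\ref{lem4.1}.
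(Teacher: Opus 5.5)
Your block-by-block reduction is the same idea as the paper's proof. The paper applies THEOREM~\ref{thm4.2} with $M=a=D=k=1$ to rows 1--6 and then adjoins the three rows for $V$. You instead go straight to LEMMA~\ref{lem4.1} (equivalently THEOREM~\ref{thm3.3}) for each of the three blocks, which is if anything cleaner. Three of your steps are correct:
\begin{itemize}
\item restricting a common solution to each block, using the zero columns;
\item the substitution $a_j=a_{3\ell,j+1}c_{j+1}$ for the $\ell$-th block;
\item identifying $\epsilon_j,\varepsilon_j,\eta_j$ with $(n-j)(n-j-2)$ for $n=2m+1,2m,2t$.
\end{itemize}

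The step that fails is your fallback for sign mismatches. A wrong sign cannot be absorbed into a single $a_{\ell j}$. The hypothesis $a_{1j}=a_{2j}=a_{3j}$ (and its analogues) ties the three entries of a column within a block together. Re-signing $a_{2j}$ therefore re-signs that column in the other two rows too, and those rows then stop matching.

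This matters, because the check you propose does find mismatches. Rows 1 and 3--8 carry exactly the required signs. In the statement, however, row 9 omits the entry $a_{9,2t}$; it is restored in the matrix displayed in the paper's proof. Row 2 begins $a_{21},a_{22},-a_{23}$. For $n=2m+1$ the required sign in column $j+1$ is $(-1)^{j}$, which the tail $-a_{2,2m},\,a_{2,2m+1}$ of that same row does follow.

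Write $A_j=a_{3,j+1}c_{j+1}$. With the displayed signs in columns 2 and 3, row 2 computes $\sum_{j=0}^{2m}(-1)^jA_j+2(A_1-A_2)$ instead of $\sum_{j=0}^{2m}(-1)^jA_j$. When $A_1-A_2$ is odd, it then no longer forces $\sum_{j=0}^{n-1}(-1)^{n-j+1}a_j\equiv 3\pmod 4$. That is exactly the hypothesis that rules out $x\equiv 0\pmod 4$ in THEOREM~\ref{thm2.3}.

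So you must declare these entries misprints and prove the result for the intended alternating matrix. That is the only reading under which each block is an instance of THEOREM~\ref{thm4.1}, and you cannot repair the entries inside the hypotheses.

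Your pairing of right-hand sides is also slightly off in the third block. The partial alternating sum is row 7, with right side $4s-1$, and the $\eta_j$-weighted row is row 8, with right side $4q-1$. This is harmless, since both hypotheses have the same form $4(\cdot)-1$.
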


\begin{proof}
By THEOREM~\ref{thm4.2}, let $S: Ax=b$ be a system of linear equations over $\mathbb{Z}$ and $A\in M_{6\times(2m+2)}(\mathbb{Z})$. Let $C_{s_i},G_{s_i}\in\mathbb{Z}(x,y)$ be plane algebraic curves, nonsingular curves of genus $g\ge 0$ with integer coefficients, and $v,u,d,t\ge 0\in\mathbb{Z}$ and $M=a=D=k=1$. If the linear system $S$ is
\[
\sbmat{
-a_{11}\epsilon_0 & a_{12}\epsilon_1 & -a_{13}\epsilon_2 & \cdots & -a_{1,2m-1}\epsilon_{2m-2} & 0 & a_{2m+1} & 0\\
a_{21} & a_{22} & -a_{23} & \cdots & -a_{2,2m} & a_{2,2m+1} & 0 & \\
a_{31} & -a_{32} & a_{33} & \cdots & a_{3,2m-1} & -a_{3,2m} & a_{3,2m+1} & -a_{3,2m+2}\\
a_{41}\varepsilon_0 & -a_{42}\varepsilon_1 & a_{43}\varepsilon_2 & \cdots & -a_{4,2m-2}\varepsilon_{2m-3} & 0 & a_{4,2m} & 0 & 0\\
-a_{51} & a_{52} & -a_{53} & \cdots & -a_{5,2m-1} & a_{5,2m} & 0 & 0\\
-a_{61} & a_{62} & -a_{63} & \cdots & -a_{6,2m-1} & a_{6,2m} & -a_{6,2m+1} & 0
}
\sbmat{c_1\\c_2\\c_3\\ \vdots\\ \vdots\\ c_{2m+2}}
=
\sbmat{4d-1\\4v-1\\1\\4t-1\\4u-1\\1}
\]
where $\epsilon_j=(2m+1-j)(2m-j-1)$ and $\varepsilon_j=(2m-j)(2m-j-2)$, and there exist two algebraic curves of degree $\deg C_{s_i}=2m$ and $\deg G_{s_i}=2m+1$, and every three consecutive elements in any column are equal, $a_{1j}=a_{2j}=a_{3j}$, and $a_{4j}=a_{5j}=a_{6j}$. Let $\{s_1,s_2,s_3,\dots,s_N\}\subset\mathbb{Z}^{2m+2}$ denote the set of integer solutions of the system of linear equations $S$. Then for every solution $s_i=(c_1,c_2,\dots,c_{2m+2})$ there exist two algebraic curves $C_{s_i}$ and $G_{s_i}$ where
\[
G_{s_i}: Y^2 = a_{31}c_1X^{2m+1}+a_{32}c_2X^{2m}+a_{33}c_3X^{2m-1}+\cdots+a_{3,2m+2}c_{2m+2},
\]
\[
C_{s_i}: Y^2 = a_{61}c_1X^{2m}+a_{62}c_2X^{2m-1}+a_{63}c_3X^{2m-2}+\cdots+a_{6,2m+1}c_{2m+1}.
\]
If $y\ge 1$, $x>1$ and $\forall(x,y)\in C_{s_i}(\mathbb{Z}^+)$ and $\forall(x,y)\in G_{s_i}(\mathbb{Z}^+)$ holds then
\[
\big\{C_{s_i}(\mathbb{Z}^+)\big\}^{N}_{i=1} = \emptyset, \qquad \big\{G_{s_i}(\mathbb{Z}^+)\big\}^{N}_{i=1} = \emptyset.
\]
Now suppose that there exists another curve sharing the solutions of the linear system in THEOREM~\ref{thm4.2}. Let $V$ be a curve of degree $\deg V=2t$, where $\deg G>\deg C>\deg V$. We now consider the coefficients of the curve $V$ and adjoin to the system $S$ a new system of size $3\times 2t$. Consequently, we obtain a new system consisting of $9\times(2m+2)$ equations, and this system $S$ is
\[
\sbmat{
-a_{11}\epsilon_0 & a_{12}\epsilon_1 & -a_{13}\epsilon_2 & \cdots & -a_{1,2m-1}\epsilon_{2m-2} & 0 & a_{2m+1} & 0\\
a_{21} & a_{22} & -a_{23} & \cdots & -a_{2,2m} & a_{2,2m+1} & 0 & \\
a_{31} & -a_{32} & a_{33} & \cdots & a_{3,2m-1} & -a_{3,2m} & a_{3,2m+1} & -a_{3,2m+2}\\
a_{41}\varepsilon_0 & -a_{42}\varepsilon_1 & a_{43}\varepsilon_2 & \cdots & -a_{4,2m-2}\varepsilon_{2m-3} & 0 & a_{4,2m} & 0 & 0\\
-a_{51} & a_{52} & -a_{53} & \cdots & -a_{5,2m-1} & a_{5,2m} & 0 & 0\\
-a_{61} & a_{62} & -a_{63} & \cdots & -a_{6,2m-1} & a_{6,2m} & -a_{6,2m+1} & 0\\
-a_{71} & a_{72} & -a_{73} & \cdots & -a_{7,2t-1} & a_{7,2t} & 0 & 0 & 0 & 0 & 0\\
a_{81}\eta_0 & -a_{82}\eta_1 & a_{83}\eta_2 & \cdots & -a_{8,2t-2}\eta_{2t-3} & 0 & a_{8,2t} & 0 & 0 & 0 & 0\\
-a_{91} & a_{92} & -a_{93} & \cdots & -a_{9,2t-1} & a_{9,2t} & -a_{9,2t+1} & 0 & 0 & 0 & 0
}
\sbmat{c_1\\c_2\\c_3\\c_4\\c_5\\ \vdots\\ \vdots\\ \vdots\\ c_{2m+2}}
=
\sbmat{4d-1\\4v-1\\1\\4t-1\\4u-1\\1\\4s-1\\4q-1\\1}
\]
where $\epsilon_j=(2m+1-j)(2m-j-1)$, $\varepsilon_j=(2m-j)(2m-j-2)$, and $\eta_j=(2t-j)(2t-j-2)$, where $\deg G_{s_i}=2m+1$, $\deg C_{s_i}=2m$, and $\deg V_{s_i}=2t$, and $2m+1>2m>2t$. With the condition that any three consecutive elements in any column are equal, $a_{1j}=a_{2j}=a_{3j}$, $a_{4j}=a_{5j}=a_{6j}$, and $a_{7j}=a_{8j}=a_{9j}$. By THEOREM~\ref{thm4.2}, if the linear system of equations $S: Ax=b$ where $A\in M_{9\times(2m+2)}(\mathbb{Z})$ admits a solution, where $\{s_1,s_2,s_3,\dots,s_N\}\subset\mathbb{Z}^{2m+2}$ denotes the solution of the linear system, and $s_i=(c_1,c_2,c_3,\dots,c_{2m+2})$, then there exist three algebraic curves
\[
G_{s_i}: y^2 = a_{31}c_1x^{2m+1}+a_{32}c_2x^{2m}+a_{33}c_3x^{2m-1}+\cdots+a_{3,2m+2}c_{2m+2},
\]
\[
C_{s_i}: y^2 = a_{61}c_1x^{2m}+a_{62}c_2x^{2m-1}+a_{63}c_3x^{2m-2}+\cdots+a_{6,2m+1}c_{2m+1},
\]
\[
V_{s_i}: y^2 = a_{91}c_1x^{2t}+a_{92}c_2x^{2t-1}+\cdots+a_{9,2t+1}c_{2t+1}.
\]
If $y\ge 1$, $x>1$ and $\forall(x,y)\in C_{s_i}(\mathbb{Z}^+)$ and $\forall(x,y)\in G_{s_i}(\mathbb{Z}^+)$ and $\forall(x,y)\in V_{s_i}(\mathbb{Z}^+)$ holds then
\[
\big\{C_{s_i}(\mathbb{Z}^+)\big\}^{N}_{i=1} = \emptyset, \qquad \big\{G_{s_i}(\mathbb{Z}^+)\big\}^{N}_{i=1} = \emptyset, \qquad \big\{V_{s_i}(\mathbb{Z}^+)\big\}^{N}_{i=1} = \emptyset. \qedhere
\]
\end{proof}

\begin{theorem}\label{thm4.4}
Let $S: Ax=b$ be a system of linear equations over $\mathbb{Z}$ and $A\in M_{9\times(2m+1)}(\mathbb{Z})$. Let $C_{s_i},G_{s_i},V_{s_i}\in\mathbb{Z}(x,y)$ be plane algebraic curves, nonsingular curves of genus $g\ge 0$. If the linear system $S$ is the same coefficient matrix as in THEOREM~\ref{thm4.3}, with right-hand si

\[
\sbmat{
-a_{11}\epsilon_0 & a_{12}\epsilon_1 & -a_{13}\epsilon_2 & \cdots & -a_{1,2m-1}\epsilon_{2m-2} & 0 & a_{2m+1} & 0\\
a_{21} & a_{22} & -a_{23} & \cdots & -a_{2,2m} & a_{2,2m+1} & 0 & \\
a_{31} & -a_{32} & a_{33} & \cdots & a_{3,2m-1} & -a_{3,2m} & a_{3,2m+1} & -a_{3,2m+2}\\
a_{41}\varepsilon_0 & -a_{42}\varepsilon_1 & a_{43}\varepsilon_2 & \cdots & -a_{4,2m-2}\varepsilon_{2m-3} & 0 & a_{4,2m} & 0 & 0\\
-a_{51} & a_{52} & -a_{53} & \cdots & -a_{5,2m-1} & a_{5,2m} & 0 & 0\\
-a_{61} & a_{62} & -a_{63} & \cdots & -a_{6,2m-1} & a_{6,2m} & -a_{6,2m+1} & 0\\
-a_{71} & a_{72} & -a_{73} & \cdots & -a_{7,2t-1} & a_{7,2t} & 0 & 0 & 0 & 0 & 0\\
a_{81}\eta_0 & -a_{82}\eta_1 & a_{83}\eta_2 & \cdots & -a_{8,2t-2}\eta_{2t-3} & 0 & a_{8,2t} & 0 & 0 & 0 & 0\\
-a_{91} & a_{92} & -a_{93} & \cdots & -a_{9,2t-1} & a_{9,2t} & -a_{9,2t+1} & 0 & 0 & 0 & 0
}
\sbmat{c_1\\c_2\\c_3\\c_4\\c_5\\ \vdots\\ \vdots\\ \vdots\\ c_{2m+2}}
=
\sbmat{-1\\-1\\1\\-1\\-1\\1\\-1\\-1\\1}
\]
where $\epsilon_j=(2m+1-j)(2m-j-1)$, $\varepsilon_j=(2m-j)(2m-j-2)$, and $\eta_j=(2t-j)(2t-j-2)$, where $\deg G_{s_i}=2m+1$, $\deg C_{s_i}=2m$, $\deg V_{s_i}=2t$, and $2m+1>2m>2t$, with the condition that any three consecutive elements in any column are equal, $a_{1j}=a_{2j}=a_{3j}$, $a_{4j}=a_{5j}=a_{6j}$, and $a_{7j}=a_{8j}=a_{9j}$. If the linear system $S: Ax=b$ admits a solution, where $\{s_1,s_2,s_3,\dots,s_N\}\subset\mathbb{Z}^{2m+2}$ denotes the solution set, and $s_i=(c_1,c_2,c_3,\dots,c_{2m+2})$, then there exist three algebraic curves
\[
G_{s_i}: y^2 = a_{31}c_1x^{2m+1}+a_{32}c_2x^{2m}+a_{33}c_3x^{2m-1}+\cdots+a_{3,2m+2}c_{2m+2},
\]
\[
C_{s_i}: y^2 = a_{61}c_1x^{2m}+a_{62}c_2x^{2m-1}+a_{63}c_3x^{2m-2}+\cdots+a_{6,2m+1}c_{2m+1},
\]
\[
V_{s_i}: y^2 = a_{91}c_1x^{2t}+a_{92}c_2x^{2t-1}+\cdots+a_{9,2t+1}c_{2t+1}.
\]
If $y\ge 1$, $x>1$ and $\forall(x,y)\in C_{s_i}(\mathbb{Z}^+)$ and $\forall(x,y)\in G_{s_i}(\mathbb{Z}^+)$ and $\forall(x,y)\in V_{s_i}(\mathbb{Z}^+)$ holds then
\[
\big\{C_{s_i}(\mathbb{Z}^+)\big\}^{N}_{i=1} = \emptyset, \qquad \big\{G_{s_i}(\mathbb{Z}^+)\big\}^{N}_{i=1} = \emptyset, \qquad \big\{V_{s_i}(\mathbb{Z}^+)\big\}^{N}_{i=1} = \emptyset.
\]
\end{theorem}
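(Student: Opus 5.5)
The plan is to obtain THEOREM~\ref{thm4.4} as a direct specialization of THEOREM~\ref{thm4.3}, with no new arithmetic. The coefficient matrix in THEOREM~\ref{thm4.4} is, by hypothesis, identical to that of THEOREM~\ref{thm4.3}. This includes the column-equality conditions $a_{1j}=a_{2j}=a_{3j}$, $a_{4j}=a_{5j}=a_{6j}$, $a_{7j}=a_{8j}=a_{9j}$ and the weights $\epsilon_j,\varepsilon_j,\eta_j$. So the only difference lies in the right-hand side.

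First, I would observe that the vector $(-1,-1,1,-1,-1,1,-1,-1,1)^{T}$ is exactly the right-hand side of THEOREM~\ref{thm4.3} with the parameter choice $d=v=t=u=s=q=0$. Each entry $4d-1$, $4v-1$, $4t-1$, $4u-1$, $4s-1$, $4q-1$ then becomes $-1$. These parameters are only required to lie in $\{\,\cdot\ge 0\}\subset\mathbb{Z}$, so the value $0$ is admissible. The entries equal to $1$ already appear in THEOREM~\ref{thm4.3}; in the language of THEOREM~\ref{thm4.2} they correspond to $M=a=D=k=1$. The integer $1$ is odd and vacuously has all prime divisors $\equiv 1\pmod 4$, so the hypotheses of THEOREM~\ref{thm3.3} (the case $K=M=\alpha=1$ of THEOREM~\ref{thm3.2}) are met.

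Second, I would take any solution $s_i=(c_1,\dots,c_{2m+2})$ of the system in THEOREM~\ref{thm4.4}. By the observation above, $s_i$ is also a solution of the system in THEOREM~\ref{thm4.3} with those parameter values. I would then read off the three blocks of rows. Rows $1$--$3$ state that the coefficients $a_{3j}c_j$ of $G_{s_i}$ satisfy the three coefficient conditions of THEOREM~\ref{thm3.3} with degree $2m+1$. Rows $4$--$6$ give the same for $C_{s_i}$ with degree $2m$, and rows $7$--$9$ for $V_{s_i}$ with degree $2t$. Invoking THEOREM~\ref{thm4.3} (equivalently, THEOREM~\ref{thm3.3} applied three times per solution) gives $G_{s_i}(\mathbb{Z}^+)=C_{s_i}(\mathbb{Z}^+)=V_{s_i}(\mathbb{Z}^+)=\emptyset$ under the standing restriction $x>1$, $y\ge 1$. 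Ranging over $i=1,\dots,N$ then yields the three stated families of empty sets.

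The main obstacle is bookkeeping rather than mathematics, and it has two parts. The statement writes $A\in M_{9\times(2m+1)}(\mathbb{Z})$ while the unknown vector has $2m+2$ entries; I would read this as $9\times(2m+2)$, consistent with THEOREM~\ref{thm4.3}. I would also check carefully that the sign patterns in each row block match the three sums $\sum(-1)^{n-j+1}a_j$, $\sum(-1)^{n-j-2}a_j(n-j)(n-j-2)+a_{n-1}$ and $\sum_{j\le n-1}a_j(-1)^{n-j+1}$ for the relevant $n$. Which row corresponds to which of the $4k-1$ and $4U-1$ conditions varies between blocks. For that reason I would verify, block by block, that the row carrying the $\epsilon$, $\varepsilon$ or $\eta$ weights produces the $4U-1$ condition, that the row ending in a zero before the constant term produces the $4k-1$ condition, and that the full alternating row produces $M\alpha^2=1$.
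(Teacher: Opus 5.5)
Your proposal is correct and follows the paper's own argument. The paper proves THEOREM~\ref{thm4.4} in one line by taking $d=v=t=u=s=q=0$ (with $M=a=k=D=1$) in THEOREM~\ref{thm4.3}, which turns the right-hand side into $(-1,-1,1,-1,-1,1,-1,-1,1)^{T}$.

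Your planned block-by-block sign check goes beyond the paper, which skips it. It would show that the printed row~2 must be read with alternating signs $a_{21},-a_{22},a_{23},\dots,-a_{2,2m},a_{2,2m+1},0$. Also keep the parameter $t$ in the entry $4t-1$ distinct from the $t$ in $\deg V_{s_i}=2t$, so that setting it to $0$ does not collapse $V_{s_i}$ to degree $0$.
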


\begin{proof}
For THEOREM~\ref{thm4.3}, take $v=u=d=t=s=q=0$ and $M=a=k=D=W=1$. The proof is now complete.
\end{proof}

\begin{theorem}\label{thm4.5}
Let $S: Ax=b$ be a system of linear equations over $\mathbb{Z}$ and $A\in M_{9\times(2m+2)}(\mathbb{Z})$. Let $C_{s_i},G_{s_i},V_{s_i}\in\mathbb{Z}(x,y)$ be plane algebraic curves, where $v,u,d,t,s,q\ge 0\in\mathbb{Z}$. If the linear system $S$ is
\[
\sbmat{
-\epsilon_0 & \epsilon_1 & -\epsilon_2 & \cdots & -\epsilon_{2m-2} & 0 & 1 & 0\\
-1 & 1 & -1 & \cdots & -1 & 1 & 0 & \\
1 & -1 & 1 & \cdots & 1 & -1 & 1 & -1\\
\varepsilon_0 & -\varepsilon_1 & \varepsilon_2 & \cdots & -\varepsilon_{2m-3} & 0 & 1 & 0 & 0\\
-1 & 1 & -1 & \cdots & -1 & 1 & 0 & 0\\
-1 & 1 & -1 & \cdots & -1 & 1 & -1 & 0\\
-1 & 1 & -1 & \cdots & -1 & 1 & 0 & 0 & 0 & 0 & 0\\
\eta_0 & -\eta_1 & \eta_2 & \cdots & -\eta_{2t-3} & 0 & 1 & 0 & 0 & 0 & 0\\
-1 & 1 & -1 & \cdots & -1 & 1 & -1 & 0 & 0 & 0 & 0
}
\sbmat{c_1\\c_2\\c_3\\c_4\\c_5\\ \vdots\\ \vdots\\ \vdots\\ c_{2m+2}}
=
\sbmat{4d-1\\4v-1\\1\\4t-1\\4u-1\\1\\4s-1\\4q-1\\1}
\]
where $\epsilon_j=(2m-j)(2m-j-2)$, $\varepsilon_j=(2m+1-j)(2m+1-j-2)$, and $\eta_j=(2t-j)(2t-j-2)$, where $\deg G_{s_i}=2m+1$, $\deg C_{s_i}=2m$, $\deg V_{s_i}=2t$, and $2m+1>2m>2t$. If the linear system $S: Ax=b$ where $A\in M_{9\times(2m+2)}(\mathbb{Z})$ admits a solution, where $\{s_1,s_2,s_3,\dots,s_N\}\subset\mathbb{Z}^{2m+2}$ denotes the solution of the linear system, and $s_i=(c_1,c_2,c_3,\dots,c_N)$, then there exist three algebraic curves
\[
C_{s_i}: y^2 = c_1x^{2m}+c_2x^{2m-1}+c_3x^{2m-2}+\cdots+c_{2m+1},
\]
\[
G_{s_i}: y^2 = c_1x^{2m+1}+c_2x^{2m}+q_{23}c_3x^{2m-1}+\cdots+c_{2m+2},
\]
\[
V_{s_i}: y^2 = c_1x^{2t}+c_2x^{2t-1}+\cdots+c_{2t+1}.
\]
If $y\ge 1$, $x>1$ and $\forall(x,y)\in C_{s_i}(\mathbb{Z}^+)$ and $\forall(x,y)\in G_{s_i}(\mathbb{Z}^+)$ and $\forall(x,y)\in V_{s_i}(\mathbb{Z}^+)$ holds then
\[
C_{s_i}(\mathbb{Z}^+) = G_{s_i}(\mathbb{Z}^+) = V_{s_i}(\mathbb{Z}^+) = \emptyset
\]
for every $s_i$.
\end{theorem}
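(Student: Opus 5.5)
The plan is to obtain Theorem~\ref{thm4.5} as a specialization of Theorem~\ref{thm4.3}, in the same way Theorem~\ref{thm4.4} was obtained from it. Concretely, I would set every entry $a_{ij}=1$ for $1\le i\le 9$ and every admissible column $j$. The hypothesis $a_{1j}=a_{2j}=a_{3j}$, $a_{4j}=a_{5j}=a_{6j}$, $a_{7j}=a_{8j}=a_{9j}$ then holds trivially. I would also take $M=a=D=k=1$ in the right-hand side. The coefficient matrix of Theorem~\ref{thm4.3} then collapses to the $\pm1$ / $\pm\epsilon_j,\pm\varepsilon_j,\pm\eta_j$ matrix displayed in the statement, with right-hand side $(4d-1,4v-1,1,4t-1,4u-1,1,4s-1,4q-1,1)^{T}$. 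For a solution $s_i=(c_1,\dots,c_{2m+2})$, the three curves of Theorem~\ref{thm4.3} become $C_{s_i}$, $G_{s_i}$, $V_{s_i}$ with coefficients $c_1,c_2,\dots$. The factor $q_{23}$ in $G_{s_i}$ I would read as $1$, i.e.\ a typographical remnant of $a_{33}$.

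Rather than relying only on the formal substitution, I would verify directly, block by block, that each triple of rows is exactly the set of three coefficient conditions of Theorem~\ref{thm3.3} (equivalently Lemma~\ref{lem4.1} with $K=1$) for the corresponding curve. For a curve $y^2=\sum_{j=0}^{n}b_jx^{n-j}$ with $b_j=c_{j+1}$, those conditions are the following three:
\[
\sum_{j=0}^{n-3}(-1)^{n-j-2}b_j(n-j)(n-j-2)+b_{n-1}=4U-1,
\qquad
\sum_{j=0}^{n-1}(-1)^{n-j+1}b_j=4D-1,
\qquad
\sum_{j=0}^{n}(-1)^{n-j+1}b_j=1.
\]
With $n=2m$, $n=2m+1$ and $n=2t$, the sign $(-1)^{n-j+1}$ alternates starting from $-1$ when $n$ is even and from $+1$ when $n$ is odd. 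I would match this against the displayed rows: rows $7$--$9$ correspond to $V_{s_i}$ with weights $\eta_j$. Rows $1$--$3$ and $4$--$6$ correspond to $C_{s_i}$ and $G_{s_i}$, with $\epsilon_j=(2m-j)(2m-j-2)$ and $\varepsilon_j=(2m+1-j)(2m-j-1)$ respectively. The zero columns reflect that the $j=n-2$ term has weight $0$ and that the last two coefficients enter only the later rows. Once this matching is checked, Theorem~\ref{thm3.3} applies to each curve separately: under $x>1$, $y\ge1$, each of $C_{s_i}(\mathbb{Z}^+)$, $G_{s_i}(\mathbb{Z}^+)$, $V_{s_i}(\mathbb{Z}^+)$ is empty. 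Since this holds for every $s_i$ in the solution set, the conclusion follows for every $s_i$.

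The step I expect to be the main obstacle is this bookkeeping of signs and weights. Relative to Theorem~\ref{thm4.3}, the statement swaps the roles of $\epsilon_j$ and $\varepsilon_j$ and attaches the degree-$2m$ curve to the first block. The sign patterns in the rows must therefore be checked carefully against $(-1)^{n-j+1}$ and $(-1)^{n-j-2}$ for the correct parity of $n$. If a sign in the display disagrees with Theorem~\ref{thm3.3}, the repair I would try first is to multiply the offending row by $-1$. This is harmless only when the right-hand side is preserved in the form $4(\cdot)-1$; that fails in general, since $-(4w-1)=4(-w)+1$. So the cleaner fallback is to relabel which block belongs to which curve, so that each triple genuinely coincides with Theorem~\ref{thm3.3}'s hypotheses, and then invoke Theorem~\ref{thm3.3} three times.
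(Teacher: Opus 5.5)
Your overall route is the paper's. Its proof just sets every $a_{ij}=1$ in Theorem~\ref{thm4.4}. Starting from Theorem~\ref{thm4.3} with $M=a=D=k=1$ is actually the better source, since Theorem~\ref{thm4.4} only covers the right-hand side with $d=v=t=u=s=q=0$.

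The problem is your block-by-block matching, which is exactly where the argument would break. You attach rows $1$--$3$ to $C_{s_i}$ and rows $4$--$6$ to $G_{s_i}$. The supports and signs of the displayed rows force the opposite, i.e.\ the same assignment as in Theorem~\ref{thm4.3}:
\begin{itemize}
\item Row~$3$ ends with $-c_{2m+2}$, which is not a coefficient of $C_{s_i}$ at all.
\item Row~$1$ has signs $(-1)^{j+1}=(-1)^{n-j-2}$ with $n=2m+1$. Its weighted sum runs up to $j=2m-2=n-3$, and its entry $1$ sits on $c_{2m+1}$, which is $a_{n-1}$ for $n=2m+1$.
\item Row~$4$ has signs $(-1)^j$, stops at $j=2m-3$, and puts its $1$ on $c_{2m}$.
\item Row~$6$ stops at $c_{2m+1}$.
\end{itemize}
What the statement changes relative to Theorem~\ref{thm4.3} is only the \emph{definition} of the weights. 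The $G$-rows now carry $(2m-j)(2m-j-2)$ and the $C$-rows carry $(2m+1-j)(2m-j-1)$. For example, the printed $\epsilon_{2m-2}$ equals $0$, whereas the last weighted term for a degree-$(2m+1)$ curve has weight $3\cdot 1$.

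Relabeling blocks cannot cure this, because the support pins each row to its curve and only the weights are off. The mismatch is also not cosmetic. Writing $k=2m-j$, the two weights are $k(k-2)$ and $k^2-1$. Modulo $4$ these are $0,3$ for $k$ even and $3,0$ for $k$ odd. So the printed rows $1$ and $4$ impose a different congruence from the hypothesis $\sum_{j\le n-3}(-1)^{n-j-2}a_j(n-j)(n-j-2)+a_{n-1}\equiv 3 \pmod 4$ of Theorem~\ref{thm3.3}. That hypothesis is precisely what the case $x\equiv 1\pmod 4$ of Theorem~\ref{thm2.3} uses. Neither the formal substitution into Theorem~\ref{thm4.3} nor any relabeling yields the statement as printed.

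The missing step is to recognize the definitions of $\epsilon_j$ and $\varepsilon_j$ in the statement as transposed, i.e.\ a typo. Restore $\epsilon_j=(2m+1-j)(2m-j-1)$ and $\varepsilon_j=(2m-j)(2m-j-2)$ from Theorem~\ref{thm4.3}, and keep rows $1$--$3$ for $G_{s_i}$ and rows $4$--$6$ for $C_{s_i}$. The $\eta_j$ block for $V_{s_i}$ already matches. With that reading, your substitution $a_{ij}=1$, or three direct applications of Theorem~\ref{thm3.3}, goes through. The paper's one-line proof passes over the same discrepancy silently.
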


\begin{proof}
By THEOREM~\ref{thm4.4}, any three consecutive entries in a given column are equal. We observe that $a_{1j}=a_{2j}=a_{3j}$, $a_{4j}=a_{5j}=a_{6j}$, and $a_{7j}=a_{8j}=a_{9j}$. Hence, for THEOREM~\ref{thm4.4}, let all entries in the column be equal to 1, $a_{1j}=a_{2j}=a_{3j}=a_{4j}=a_{5j}=a_{6j}=a_{7j}=a_{8j}=a_{9j}=1$. This completes the proof.
\end{proof}

\begin{theorem}\label{thm4.6}
Let $S: Ax=b$ be a system of linear equations with $A\in M_{3k\times m}(\mathbb{Z})$, such that the system $S$ satisfies the following condition: for every column, any three consecutive entries are equal, $a_{3i-1,j}=a_{3i-2,j}=a_{3i,j}$. If $S=\{s_1,s_2,s_3,\dots,s_N\}\subset\mathbb{Z}^{m+1}$ is a solution of the system $S$, then there exist nonsingular curves of genus $g\ge 0$ of degree $n\le m$
\[
C_{js_i}: Y^2 = a_{3j1}c_1X^n + a_{3j2}c_2X^{n-1}+\cdots+a_{n+1}c_{n+1}.
\]
If $y\ge 1$, $x>1$ and $\forall(x,y)\in C_{js_i}(\mathbb{Z}^+)$ holds then
\[
\Big[C_{js_i}(\mathbb{Z}^+)\Big]^{N}_{\substack{i=1\\1\le j\le k}} = \emptyset.
\]
If $N\Rightarrow\infty$ then
\[
\Big[C_{js_i}(\mathbb{Z}^+)\Big]^{\infty}_{\substack{i=1\\1\le j\le k}} = \emptyset.
\]
\end{theorem}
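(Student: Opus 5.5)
The plan is to reduce the statement to $k$ independent applications of LEMMA~\ref{lem4.1} (equivalently THEOREM~\ref{thm4.1} with $M=1$), one for each block of three consecutive rows of $A$, in the same way that THEOREMS~\ref{thm4.2} and \ref{thm4.3} were obtained by stacking two and three such blocks. Write $A$ as the vertical concatenation of $k$ blocks
\[
A=\sbmat{A_1\\ \vdots \\ A_k}, \qquad A_j\in M_{3\times m}(\mathbb{Z}),
\]
where $A_j$ consists of rows $3j-2,3j-1,3j$. Split $b=(b_1,\dots,b_k)$ in the same way. The hypothesis $a_{3j-2,l}=a_{3j-1,l}=a_{3j,l}$ says that in each column of $A_j$ the three underlying entries agree. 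I will read the statement as also implicitly requiring that each $A_j$ carries the sign pattern and weights $\epsilon_l=(n_j-l)(n_j-l-2)$ of LEMMA~\ref{lem4.1} for some degree $2\le n_j\le m$. In the same reading, $b_j=(4U_j-1,\,4D_j-1,\,K_j^2)^{T}$ with $K_j$ odd and all prime divisors of $K_j$ congruent to $1 \pmod 4$. Columns beyond $n_j+1$ are padded with zeros, exactly as in the matrices of THEOREMS~\ref{thm4.3}--\ref{thm4.5}.

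\textbf{Key steps.} First, fix a solution $s_i=(c_1,\dots,c_{m})$ of $S$. Since $As_i=b$ holds row by row, for every $j$ we have $A_js_i=b_j$. Truncating $s_i$ to its first $n_j+1$ coordinates (the padded columns contribute nothing) therefore gives a solution of a system of exactly the shape in LEMMA~\ref{lem4.1}. Second, apply LEMMA~\ref{lem4.1} to that block. It produces the curve
\[
C_{js_i}:\ Y^2=a_{3j,1}c_1X^{n_j}+\cdots+a_{3j,n_j+1}c_{n_j+1},
\]
whose coefficients satisfy the three congruence and norm conditions of THEOREM~\ref{thm3.3}. Hence, under $x>1$, $y\ge1$, we get $C_{js_i}(\mathbb{Z}^+)=\emptyset$. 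Third, this holds for every pair $(i,j)$ with $1\le i\le N$ and $1\le j\le k$, so the whole family is empty, which is the finite claim. Finally, for the infinite case, observe that each step used only a single solution $s_i$ and never used $N$. Thus if $S$ has infinitely many integer solutions (for instance, a particular solution plus integer multiples of an integral kernel vector when $\operatorname{rank}A<m$), the same argument applies to each $s_i$. This gives infinitely many curves, all with empty positive-integer point sets.

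\textbf{Main obstacle.} The delicate step is the first one: checking that the block structure really matches LEMMA~\ref{lem4.1}. The statement only records the equal-entries condition, whereas THEOREM~\ref{thm3.2} needs the precise signs $(-1)^{n-j+1}$, the weights $(n-j)(n-j-2)$, and the right-hand sides of the form $4U-1$, $4D-1$ and $K^2$. My first move would be to make these implicit requirements explicit, following THEOREM~\ref{thm4.5}, where all $a_{ij}=1$ and the blocks have different degrees $2m+1$, $2m$, $2t$. I would then verify, for a single block of degree $n_j<m$, that the zero padding leaves the three linear identities unchanged, so that LEMMA~\ref{lem4.1} applies verbatim. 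A secondary point is that a curve $C_{js_i}$ might degenerate (for instance $c_1=0$ lowers the degree). To handle this I would either require $a_{3j,1}c_1\ne0$ or take $n_j$ to be the actual degree, rechecking the conditions with that degree.
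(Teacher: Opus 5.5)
Your proposal is correct, given the block hypotheses you make explicit, and it follows the paper's own proof. The paper likewise treats the $3k\times m$ system as $k$ stacked blocks of the form in LEMMA~\ref{lem4.1} (it cites THEOREM~\ref{thm4.1} but displays the $M=1$, $K^2$ version), applies that result block by block and solution by solution, and then lets $N\to\infty$, tacitly using the same sign, weight and right-hand-side hypotheses you state.

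One small correction: for $K_j\neq 1$ the criterion behind each block is THEOREM~\ref{thm3.2} with $M=1$, $\alpha=K_j$, not THEOREM~\ref{thm3.3}. Also, your degeneracy worry is harmless, because the three coefficient conditions depend only on the exponents of $X$, so a vanishing leading coefficient changes nothing.
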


\begin{proof}
By THEOREM~\ref{thm4.1}, let $S: Ax=b$ be a system of linear equations over $\mathbb{Z}$, and $A\in M_{3\times(n+1)}(\mathbb{Z})$. And $C_{s_i}\in\mathbb{Z}(x,y)$ be plane algebraic curves, and $k\ge 1$ is an odd positive integer and every prime divisor of $K$ satisfies $p\equiv 1\pmod 4$, and $U,D\ge 0\in\mathbb{Z}$. If the linear system $S$ is
\[
\sbmat{
(-1)^{n-2}a_{11}\epsilon_0 & (-1)^{n-3}a_{12}\epsilon_1 & \cdots & (-1)^{1}a_{1,n-2}\epsilon_{n-3} & 0 & a_{1n} & 0 \\
(-1)^{n+1}a_{21} & (-1)^{n}a_{22} & \cdots & (-1)^{3}a_{2,n-1} & (-1)^{2}a_{2n} & 0 & \\
(-1)^{n-1}a_{31} & (-1)^{n-2}a_{32} & \cdots & (-1)^{3}a_{3,n-1} & (-1)^{2}a_{3n} & (-1)^{1}a_{3,n+1} &
}
\sbmat{c_1\\c_2\\c_3\\ \vdots \\ \vdots \\ c_{n+1}}
=
\sbmat{4U-1\\4D-1\\K^2}
\]
where $\epsilon_j=(n-j)(n-j-2)$, and any three elements in any column are equal, $a_{1j}=a_{2j}=a_{3j}$, and $n$ denotes the degree of the curve $C$, $\deg C=n$. If there is a solution to the system of linear equations $S$, and the solution is $S=\{s_1,s_2,\dots,s_N\}\subset\mathbb{Z}^{n+1}$ where $s_i=(c_1,c_2,\dots,c_{n+1})$, then the following curve exists:
\[
C_{s_i}: y^2 = a_{31}c_1x^n + a_{32}c_2x^{n-1} + a_{33}c_3x^{n-2}+\cdots+a_{3,n+1}c_{n+1}.
\]
If $y\ge 1$, $x>1$ and $\forall(x,y)\in C_{s_i}(\mathbb{Z}^+)$ holds then
\[
\big\{C_{s_i}(\mathbb{Z}^+)\big\}^{N}_{i=1} = \emptyset.
\]
Now let there be $K$ algebraic curves. Consequently, one may construct a system of linear equations using the coefficients of the $K$ curves. As a result we obtain a system $S: Ax=b$ of dimension $3k\times m$ with $A\in M_{3k\times m}(\mathbb{Z})$. This implies that all curves have degree $\le m$, and let there exist common solutions to the system, $S=\{s_1,s_2,\dots,s_N\}\subset\mathbb{Z}^m$. It then follows that if $s_i=(c_1,c_2,\dots,c_n)$ is a solution of the system, then there exist curves
\[
C_{js_i}: Y^2 = a_{3j1}c_1X^n + a_{3j2}c_2X^{n-1}+\cdots+a_{3jn}c_n.
\]
If $y\ge 1$, $x>1$ and $\forall(x,y)\in C_{js_i}(\mathbb{Z}^+)$ holds then
\[
\big\{C_{js_i}(\mathbb{Z}^+)\big\}^{k}_{j=1} = \emptyset.
\]
But there exist $N$ solutions. Hence
\[
\Big\{C_{js_i}(\mathbb{Z}^+)\Big\}^{N}_{\substack{i=1\\1\le j\le k}} = \emptyset.
\]
Now suppose there exist infinitely many solutions such that $N\Rightarrow\infty$, so
\[
\Big\{C_{js_i}(\mathbb{Z}^+)\Big\}^{\infty}_{\substack{i=1\\1\le j\le k}} = \emptyset. \qedhere
\]
\end{proof}

\section{The Relationship with Diophantine Equations}

In this section, we study the relationship among linear equations, quadratic and cubic forms, and solutions of algebraic curves. This provides a deeper and more general extension of the preceding ideas. We prove that if a common solution exists for a system consisting of linear equations and quadratic forms in $n$ terms, then an algebraic curve with no positive integer solutions can be constructed; this result is stated in THEOREM 5.1. The proof of THEOREM 5.1 is based on THEOREM 3.2. We further extend the relationship to cubic forms in $n$ terms, as established in THEOREM 5.2. In general, we formulate linear, quadratic, and cubic equations in the language of linear algebra through a matrix-based construction. We also describe several alternative forms of this matrix construction in THEOREM 5.3.

\begin{theorem}\label{thm5.1}
Let $C: MY^2 = a_1 X^{x_1} + a_2 X^{x_2} + \cdots + a_m X^{x_m} + a_{m+1}$ be an algebraic curve, nonsingular curve of genus $g\ge 0$, with coefficients $a_0,a_1,a_2,\dots,a_{m+1}\in\mathbb{Z}$, where $M,K\ge 1$ are odd positive integers such that all the prime divisors of $M,K$ satisfy $p\equiv 1\pmod 4$, and $\deg C=\max(x_1,x_2,\dots,x_m)$. Then, if
\[
\sum_{j=1}^{m}(-1)^{x_j+1}a_j - a_{m+1} = Mk^2, \qquad \sum_{j=1}^{m} a_j(-1)^{x_j+1} = 4k-1,
\]
\[
\sum_{j=1}^{m}(-1)^{x_j-2}a_jx_j = -2s, \qquad \sum_{j=1}^{m}(-1)^{x_j-2}a_jx_j^2 = 4c-1, \qquad s\ne 0,\ c,k\in\mathbb{Z},
\]
and if $y\ge 1$, $x>1$, $\forall(x,y)\in C(\mathbb{Z}^+)$ holds, then
\[
C(\mathbb{Z}^+) = \emptyset.
\]
\end{theorem}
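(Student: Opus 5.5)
The plan is to reduce Theorem~\ref{thm5.1} to the same mechanism as Theorem~\ref{thm2.3} and Theorem~\ref{thm3.1}, but working directly with $P(X)=a_1X^{x_1}+\cdots+a_mX^{x_m}+a_{m+1}$ rather than its binomial re-expansion. Since $P(-1)=\sum_j a_j(-1)^{x_j}+a_{m+1}$, the first hypothesis says exactly $P(-1)=-Mk^2$. Hence $P(X)-P(-1)=(X+1)f(X)$ with $f\in\mathbb{Z}[X]$ (the role of the $f$ in Theorem~\ref{thm3.1}). A point $(x,y)\in C(\mathbb{Z}^+)$ would therefore give $M(y^2+k^2)=(x+1)f(x)$. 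It suffices to show this is impossible for every $x>1$, splitting $x$ by its class mod $4$.

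The classes $x=4m-1$ and $x=4m+2$ are handled verbatim by LEMMA~\ref{lem2.1}, which holds for arbitrary $f$. For $x=4m$, every $x^{x_j}$ with $x_j\ge1$ vanishes mod $4$, so $P(4m)\equiv a_{m+1}$. Since $x+1\equiv1\pmod 4$, we get
\[
f(4m)\equiv P(4m)-P(-1)\equiv a_{m+1}+Mk^2=\sum_j(-1)^{x_j+1}a_j=4k-1\pmod 4.
\]
So $f(4m)$ has a prime divisor $q\equiv3\pmod4$ by THEOREM~\ref{thm2.1}. This $q$ cannot divide $M$, and it cannot divide $y^2+k^2$: if it did, then $q\mid k$ as well, because $-1$ is a non-residue mod $q$. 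This is the same argument as (2.18).

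The case $x=4m+1$ is where I expect the real work, since here $x+1=2(2m+1)$ is even. I would use the Taylor expansion at $-1$:
\[
f(x)=\sum_{r\ge1}\frac{P^{(r)}(-1)}{r!}(x+1)^{r-1},
\]
where each $P^{(r)}(-1)/r!$ is an integer. We have $P'(-1)=\sum a_jx_j(-1)^{x_j-1}=2s$ by the third hypothesis. Also $P''(-1)=\sum a_jx_j(x_j-1)(-1)^{x_j-2}=(4c-1)+2s$ by the third and fourth hypotheses. Terms with $r\ge3$ carry $(x+1)^2\equiv0\pmod4$. This gives
\[
f(4m+1)\equiv 2s+(2s-1)(2m+1)\equiv-(2m+1)\pmod 4.
\]
On its own this is not always $\equiv3$, so the step from (2.9) does not transfer directly. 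Instead I would pass to the odd number $\tfrac{x+1}{2}f(x)=(2m+1)f(4m+1)\equiv-(2m+1)^2\equiv3\pmod4$. On the other side, $M(y^2+k^2)$ is even, which forces $y$ odd, so $y^2+k^2\equiv2\pmod4$. Then $\tfrac{1}{2}M(y^2+k^2)$ is odd, and all of its prime factors are $\equiv1\pmod4$ by the same non-residue argument. That makes it $\equiv1\pmod4$, contradicting the value $3$.

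Combining the four residue classes, as in (2.19), gives $C(\mathbb{Z}^+)=\emptyset$ for $x>1$, $y\ge1$. The main check I would carry out carefully is the $x=4m+1$ computation: the integrality of the Taylor coefficients, and using the hypothesis $s\neq0$ only through the evenness of $P'(-1)$.
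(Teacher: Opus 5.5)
Your argument is valid, but it takes a different route from the paper. The paper does no new arithmetic. It views $C$ as the dense curve of Theorem~\ref{thm3.2} with most coefficients set to zero, and rewrites that theorem's third condition in sparse form as $\sum_j(-1)^{x_j-2}a_jx_j(x_j-2)=4U-1$. It then notes that the two split conditions give $\sum_j(-1)^{x_j-2}a_jx_j^2-2\sum_j(-1)^{x_j-2}a_jx_j=4(c+s)-1$, so Theorem~\ref{thm3.2} applies. You instead redo the mechanism of Theorem~\ref{thm2.3} directly in sparse form. You read the hypotheses as $P(-1)=-Mk^2$, $a_{m+1}+Mk^2=4k-1$, $P'(-1)=2s$ and $P''(-1)=4c-1+2s$, and you replace step (2.9) by a parity argument on the odd number $\tfrac{x+1}{2}f(x)$. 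The paper's route is a short specialization, but it depends on Theorem~\ref{thm3.2} and on an informal sparse-to-dense relabeling. Yours is self-contained and shows what each condition controls; indeed the third condition of Theorem~\ref{thm3.2} is exactly $P'(-1)+P''(-1)\equiv 3\pmod 4$.

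What neither you nor the paper notices, although your own formulas exhibit it, is that the hypotheses are inconsistent. You state that $P^{(r)}(-1)/r!\in\mathbb{Z}$, yet you compute $P''(-1)=(4c-1)+2s$, which is odd. Directly: $x_j^2\equiv x_j\pmod 2$ gives $\sum_j(-1)^{x_j}a_jx_j^2\equiv\sum_j(-1)^{x_j}a_jx_j\pmod 2$, i.e.\ $4c-1\equiv -2s\pmod 2$, which is impossible. So no curve satisfies the hypotheses, the theorem is vacuously true, and the proof is one line.

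Your anomaly at $x=4m+1$ is a symptom of this. If you use integrality of $c_2=P''(-1)/2$, then $c_2(4m+2)\equiv 2c_2\pmod 4$. Hence $f(4m+1)\equiv P'(-1)+P''(-1)=4(s+c)-1\equiv 3\pmod 4$, and step (2.9) transfers verbatim. For odd $m$ this contradicts your $-(2m+1)$, as contradictory hypotheses will.

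Your workaround is therefore tailored to an empty hypothesis set. The natural non-vacuous version keeps the single unsplit condition $\sum_j(-1)^{x_j}a_jx_j(x_j-2)=4U-1$, which forces $P'(-1)$ odd. There one gets $(2m+1)f(4m+1)\equiv 3(2m+1)\pmod 4$, so your trick fails for odd $m$, while the direct (2.9)-type argument still works.
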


\begin{proof}
By THEOREM~\ref{thm3.2}, let $C: My^2 = a_0x^n+a_1x^{n-1}+\cdots+a_{n-1}x+a_n$ be an algebraic curve, nonsingular of genus $g\ge 0$, with coefficients $a_0,a_1,a_2,\dots,a_n\in\mathbb{Z}$, where $M,k$ are odd constants, such that all the prime divisors of $M,K$ satisfy $p\equiv 1\pmod 4$, and $C(\mathbb{Z}^+)$ are the positive integer solutions for curve $C$. Then, if
\[
\sum_{j=0}^{n}(-1)^{n-j+1}a_j = Mk^2, \qquad \sum_{j=0}^{n-1} a_j(-1)^{n-j+1} = 4k-1,
\]
\[
\sum_{j=0}^{n-3}(-1)^{n-j-2}a_j(n-j)(n-j-2)+a_{n-1} = 4U-1, \qquad U,K\ge0\in\mathbb{Z},
\]
if $y\ge 1$, $x>1$, $\forall(x,y)\in C(\mathbb{Z}^+)$ holds, then $C(\mathbb{Z}^+)=\emptyset$.

Consider the equation
\[
\sum_{j=0}^{n-3}(-1)^{n-j-2}a_j(n-j)(n-j-2)+a_{n-1} = 4U-1.
\]
We observe that each term $(n-j)$ corresponds to the weight of a monomial of the curve $C$, where $n$ denotes the weight of the leading term and $(n-j)$ denotes the weight of the $(n-j)$-th term.

Now, suppose that $a_0=a_1=a_2=a_3=\cdots=a_{k-1}=0$. This implies the relations
\[
a_0(n-0)(n-0-2)=a_1(n-1)(n-1-2)=a_2(n-2)(n-2-2)=\cdots=a_{k-1}(n-k+1)(n-k+1-2)=0.
\]
Set
\[
a_k(n-k) = a_1x_1.
\]
In the same manner, suppose that $a_{k+1}=a_{k+2}=a_{k+3}=a_{k+4}=\cdots=a_{k+r-1}=0$. Define
\[
a_{k+1}(n-k-1)(n-k-1-2)=a_{k+2}(n-k-2)(n-k-2-2)=\cdots=a_{k+r-1}(n-k-r+1)(n-k-r+1-2)=0.
\]
Define
\[
a_{k+r}(n-k-r) = a_2x_2.
\]
The idea is to assign the value zero to certain coefficients $a_j$ until we reach $a_{k+r+t}(n-k-r-t)=a_3x_3$, and similarly $a_{k+r+t+u}(n-k-r-t-u)=a_4x_4$, continuing in this fashion until we obtain all the values $x_1,x_2,x_3,x_4,\dots,x_m$. As a consequence, we arrive at the following equations. Observe that
\[
\sum_{j=1}^{m}(-1)^{x_j+1}a_j - a_{m+1} = Mk^2, \qquad \sum_{j=1}^{m}a_j(-1)^{x_j+1} = 4k-1,
\]
\[
\sum_{j=1}^{m}(-1)^{x_j-2}a_jx_j(x_j-2) = 4U-1.
\]
Note that
\[
\sum_{j=1}^{m}(-1)^{x_j-2}a_jx_j(x_j-2) = \sum_{j=1}^{m}(-1)^{x_j-2}a_jx_j^2 - 2\sum_{j=1}^{m}(-1)^{x_j-2}a_jx_j = 4U-1 = 4(t\pm s)-1 \quad \text{because } U\in\mathbb{Z}.
\]
Then let
\[
-2\sum_{j=1}^{m}(-1)^{x_j-2}a_jx_j = 4s.
\]
Then
\[
\sum_{j=1}^{m}(-1)^{x_j-2}a_jx_j = -2s, \qquad \sum_{j=1}^{m}(-1)^{x_j-2}a_jx_j^2 = 4t-1.
\]
Therefore, by THEOREM~\ref{thm3.2}, if the above system admits an integral solution, then
\[
C: My^2 = a_1X^{x_1}+a_2X^{x_2}+\cdots+a_mX^{x_m}+a_{m+1}.
\]
If $x>1$, $y\ge 1$ and $\forall(x,y)\in C(\mathbb{Z}^+)$ holds then
\[
C(\mathbb{Z}^+) = \emptyset. \qedhere
\]
\end{proof}

\begin{theorem}\label{thm5.2}
Let $L: Ax=b$ and $Q: Px=d$ and $V: Bx=m$ be systems of linear and quadratic equations, where $A\in M_{2\times m}(\mathbb{Z})$, $P\in M_m(\mathbb{Z})$ and $B\in M_m(\mathbb{Z})$, and each system shares the common element $A=(a_{ij})=P=(a_{ij})=B=(a_{ij})$. Suppose that $S_L,S_Q,S_V$ are solutions of the respective systems with $S_L,S_Q,S_V\subset\mathbb{Z}^m$, and that each system satisfies the following:
\[
L:\ \sbmat{a_{11} & a_{12} & a_{13} & \cdots & a_{1m}\\ a_{21} & a_{22} & a_{23} & \cdots & a_{2m}}
\sbmat{(-1)^{x_1+1}\\(-1)^{x_2+1}\\(-1)^{x_3+1}\\ \vdots\\ \vdots\\ (-1)^{x_m+1}}
=
\sbmat{Mk^2\\4u-1}
\]
In each column, any two consecutive entries are equal, $a_{1j}=a_{2j}$. Moreover, $M,K\ge 1$ are odd positive integers, and every prime divisor of $M,k$ is of the form $4n+1$. Denote by $Q: Px=d$ the system of quadratic equations
\[
D_1 =
\sbmat{
(-1)^{x_1-2}a_{11} & 0 & 0 & 0 & \cdots & 0\\
0 & (-1)^{x_2-2}a_{22} & 0 & 0 & \cdots & 0\\
0 & 0 & (-1)^{x_3-2}a_{33} & 0 & \cdots & 0\\
0 & 0 & 0 & (-1)^{x_4-2}a_{44} & \cdots & 0\\
\vdots & & & & \ddots & \\
0 & 0 & 0 & 0 & \cdots & (-1)^{x_m-2}a_{mm}
}
\]
where $D_1\in\mathbb{Z}^{m\times m}$ and $t\in\mathbb{Z}$, and
\[
Q: [X^T D_1 X] = [4t-1].
\]
And $V: Bx=m$ is the system of linear equations, equal to
\[
V:\ \sbmat{a_{11} & a_{12} & a_{13} & \cdots & a_{1m}}
\sbmat{(-1)^{x_1-2}x_1\\(-1)^{x_1-2}x_2\\(-1)^{x_1-2}x_3\\ \vdots\\ \vdots\\ (-1)^{x_1-2}x_m}
= [-2s].
\]
If there exists a common solution to these systems and $S_L\cap S_Q\cap S_V = \{s_1,s_2,s_3,\dots,s_N\}\subset\mathbb{Z}^m$, and $s_i=(x_1,x_2,x_3,\dots,x_m)$, then there exists a curve, nonsingular curve of genus $g\ge 0$, where
\[
C_{s_i}: My^2 = a_1X^{x_1}+a_2X^{x_2}+\cdots+a_mX^{x_m}.
\]
If $y\ge 1$, $x>1$, $\forall(x,y)\in C(\mathbb{Z}^+)$ then
\[
C_{s_i}(\mathbb{Z}^+) = \emptyset
\]
for every $s_i$.
\end{theorem}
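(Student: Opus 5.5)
The plan is to reduce THEOREM~\ref{thm5.2} to THEOREM~\ref{thm5.1}, reading the three systems $L$, $Q$, $V$ as a matrix repackaging of the four coefficient conditions there. I fix a common solution $s_i=(x_1,\dots,x_m)\in S_L\cap S_Q\cap S_V$. Since $A$, $P$, $B$ share the entries $a_{ij}$, and $a_{1j}=a_{2j}$ in $L$, I write $a_j:=a_{1j}=a_{2j}=a_{jj}$. The curve is then
\[
C_{s_i}: My^2=a_1X^{x_1}+\cdots+a_mX^{x_m},
\]
which is the curve of THEOREM~\ref{thm5.1} with constant term $a_{m+1}=0$.

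The second step translates each system row by row.
\begin{itemize}
\item The first row of $L$ reads $\sum_j(-1)^{x_j+1}a_j=Mk^2$. With $a_{m+1}=0$ this is exactly $\sum_{j=1}^m(-1)^{x_j+1}a_j-a_{m+1}=Mk^2$.
\item The second row reads $\sum_j(-1)^{x_j+1}a_j=4u-1$. This is the second condition of THEOREM~\ref{thm5.1}, with $u$ in place of its $k$.
\item Expanding the quadratic form, $X^TD_1X=\sum_j(-1)^{x_j-2}a_jx_j^2$, because $D_1$ is diagonal. So $Q$ gives $\sum_j(-1)^{x_j-2}a_jx_j^2=4t-1$.
\item The single row of $V$ gives $\sum_j(-1)^{x_j-2}a_jx_j=-2s$. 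Here I read the sign in the $j$-th entry of the vector as $(-1)^{x_j-2}$, not $(-1)^{x_1-2}$, which I take to be a misprint.
\end{itemize}
These are precisely the four hypotheses of THEOREM~\ref{thm5.1}. That theorem then gives $C_{s_i}(\mathbb{Z}^+)=\emptyset$ under $x>1$, $y\ge1$. Since $s_i$ was arbitrary, this holds for every $s_i$, just as THEOREM~\ref{thm4.1} passes from one solution to all $N$ of them.

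One side condition must be checked. In THEOREM~\ref{thm5.1}, two separate quantities are both named $k$: the square in $Mk^2$ and the integer in $4k-1$. Their roles are independent, so I keep them apart, matching the square with $k$ and the other with $u$. I also need $s\ne 0$, which I will impose on the right-hand side of $V$. I also need $\deg C_{s_i}=\max x_j\ge 2$ and the nonsingularity and genus hypotheses; these I carry over as standing assumptions, as the paper does throughout.

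The main obstacle is consistency and well-definedness rather than computation. First, the systems are not genuinely linear in $(x_1,\dots,x_m)$, because the signs $(-1)^{x_j}$ depend on the parities of the unknowns. So "solution" must be interpreted as an integer tuple satisfying the displayed identities, and the argument should not rely on linear-algebra structure. Second, the reduction inherits everything from THEOREM~\ref{thm5.1}, hence from THEOREM~\ref{thm3.2} and ultimately THEOREM~\ref{thm2.3}. To be safe, I would first check directly that the conditions of THEOREM~\ref{thm5.1} really correspond, via the sparse-coefficient substitution in its proof, to the hypotheses of THEOREM~\ref{thm3.2} for the dense polynomial of degree $\max x_j$ with zeros inserted. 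In particular, I would verify that the $a_{n-1}$ term and the identity $x_j(x_j-2)=x_j^2-2x_j$ are handled correctly, since this is where a mismatch would most likely appear.
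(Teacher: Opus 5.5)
Your proposal is correct and is essentially the paper's own proof: the paper also sets $a_{m+1}=0$, reads the rows of $L$, the quadratic form $X^TD_1X$ in $Q$, and the row of $V$ as the four coefficient conditions of THEOREM~\ref{thm5.1}, and then applies that theorem to each common solution $s_i$. Two side remarks: the extra hypothesis $s\neq 0$ you plan to impose is never used in the proof of THEOREM~\ref{thm5.1}, which needs only $4c-1+4s=4(c+s)-1$; and because $a_{1j}=a_{2j}$, the two rows of $L$ have identical left-hand sides, forcing $Mk^2=4u-1$, which is impossible since $M\equiv 1\pmod 4$ gives $Mk^2\equiv k^2\not\equiv 3\pmod 4$, so $S_L=\emptyset$ and the statement in fact holds vacuously.
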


\begin{proof}
By THEOREM~\ref{thm5.1}, let $C: My^2 = a_1X^{x_1}+a_2X^{x_2}+\cdots+a_mX^{x_m}+a_{m+1}$ be an algebraic curve, nonsingular curve of genus $g\ge 0$, with coefficients $a_0,a_1,a_2,\dots,a_{m+1}\in\mathbb{Z}$, where $M,K$ are odd positive integers such that all the prime divisors of $M,K$ satisfy $p\equiv 1\pmod 4$. Then if
\[
\sum_{j=1}^{m}(-1)^{x_j+1}a_j = Mk^2, \qquad \sum_{j=1}^{m}a_j(-1)^{x_j+1} = 4k-1,
\]
\[
\sum_{j=1}^{m}(-1)^{x_j-2}a_jx_j = -2m, \qquad \sum_{j=1}^{m}(-1)^{x_j-2}a_jx_j^2 = 4c-1,
\]
and if $y\ge 1$, $x>1$, $\forall(x,y)\in C(\mathbb{Z}^+)$ holds then $C(\mathbb{Z}^+)=\emptyset$.

We now express this system of equations in matrix form. Observe that
\[
\sum_{j=1}^{m}(-1)^{x_j+1}a_j = Mk^2, \qquad \sum_{j=1}^{m}a_j(-1)^{x_j+1} = 4k-1.
\]
Where $a_{m+1}=0$, then
\[
P:\ \sbmat{a_{11} & a_{12} & a_{13} & \cdots & a_{1m}\\ a_{21} & a_{22} & a_{23} & \cdots & a_{2m}}
\sbmat{(-1)^{x_1+1}\\(-1)^{x_2+1}\\(-1)^{x_3+1}\\ \vdots\\ \vdots\\ (-1)^{x_m+1}}
=
\sbmat{Mk^2\\4u-1}.
\]
In each column, any two consecutive entries are equal, $a_{1j}=a_{2j}$. And
\[
\sum_{j=1}^{m}(-1)^{x_j-2}a_jx_j^2 = 4t-1,
\]
\[
D_1 =
\sbmat{
(-1)^{x_1-2}a_{11} & 0 & 0 & 0 & \cdots & 0\\
0 & (-1)^{x_2-2}a_{22} & 0 & 0 & \cdots & 0\\
0 & 0 & (-1)^{x_3-2}a_{33} & 0 & \cdots & 0\\
0 & 0 & 0 & (-1)^{x_4-2}a_{44} & \cdots & 0\\
\vdots & & & & \ddots & \\
0 & 0 & 0 & 0 & \cdots & (-1)^{x_m-2}a_{mm}
}
\]
where
\[
Q: [X^T D_1 X] = [4t-1],
\]
and
\[
\sum_{j=1}^{m}(-1)^{x_j-2}a_jx_j = -2s.
\]
\[
V:\ \sbmat{a_{11} & a_{12} & a_{13} & \cdots & a_{1m}}
\sbmat{(-1)^{x_1-2}x_1\\(-1)^{x_1-2}x_2\\(-1)^{x_1-2}x_3\\ \vdots\\ \vdots\\ (-1)^{x_1-2}x_m}
= [-2s].
\]
Then $L: Ax=b$ and $Q: Px=d$ and $V: Bx=m$, where $A\in M_{2\times m}(\mathbb{Z})$, $P\in M_m(\mathbb{Z})$ and $B\in M_m(\mathbb{Z})$, and each system shares the common element $A=(a_{ij})=P=(a_{ij})=B=(a_{ij})$. Suppose that $S_L,S_Q,S_V$ are solutions of the respective systems with $S_L,S_Q,S_V\subset\mathbb{Z}^m$. Now, suppose there is a common solution to $S_L,S_Q,S_V$, where $S_Q\cap S_L\cap S_V = s_1=(x_1,x_2,x_3,\dots,x_m)$. Then, according to THEOREM~\ref{thm5.1}, there exists a curve
\[
C_{s_1}: My^2 = a_1X^{x_1}+a_2X^{x_2}+\cdots+a_mX^{x_m}.
\]
If $x>1$, $y\ge 1$ and $\forall(x,y)\in C(\mathbb{Z}^+)$ holds then
\[
C_{s_1}(\mathbb{Z}^+) = \emptyset.
\]
If we assume that there exist $N$ common solutions $S_Q\cap S_L\cap S_V = \{s_1,s_2,s_3,\dots,s_N\}\in\mathbb{Z}^m$, then there exist $N$ algebraic curves such that
\[
C_{s_i}: My^2 = a_1X^{x_1}+a_2X^{x_2}+\cdots+a_mX^{x_m}.
\]
If $x>1$, $y\ge 1$ and $\forall(x,y)\in C(\mathbb{Z}^+)$ then
\[
C_{s_i}(\mathbb{Z}^+) = \emptyset
\]
for every $s_i$.
\end{proof}

\begin{theorem}\label{thm5.3}
Let $L: Ax=b$ and $Q: Px=d$ and $V: Bx=m$ be systems of linear and quadratic, and cubic equations, where $A\in M_{2\times m}(\mathbb{Z})$, $P\in M_{1\times m}(\mathbb{Z})$ and $B\in M_{1\times m}(\mathbb{Z})$, where all coefficients in the system are equal. Then $A=(a_{ij})=P=(a_{ij})=B=(a_{ij})$, and $S_Q,S_L,S_V\in\mathbb{Z}^m$ are solutions of the system. Let each system satisfy the following:
\[
L:\ \sbmat{a_{11} & a_{12} & a_{13} & \cdots & a_{1m}\\ a_{21} & a_{22} & a_{23} & \cdots & a_{2m}}
\sbmat{(-1)^{x_1+1}x_1\\(-1)^{x_2+1}x_2\\ \vdots\\ \vdots\\ (-1)^{x_m+1}x_m}
=
\sbmat{Mk^2\\4t-1}.
\]
In each column, any two consecutive entries are equal, $a_{1j}=a_{2j}$. Moreover, $M,k\ge 1$ are odd integers, and every prime divisor of $M,k$ is of the form $p=4n+1$, $t\in\mathbb{Z}$, and $Q: Px=d$ is $T=\{T_1,T_2,T_3,\dots,T_m\}\in\mathbb{Z}^{m\times m\times m}$ and $T_j$ is given by
\[
T_j \in \mathbb{Z}^{m\times m}, \qquad (T_j)_{k\ell} = \begin{cases} (-1)^{x_j-2}a_{jj} & \text{if } k=\ell=j,\\ 0 & \text{otherwise,} \end{cases} \qquad j=1,2,\dots,m,
\]
(i.e.\ $T_j$ is the all-zero matrix except for the single diagonal entry $(-1)^{x_j-2}a_{jj}$ in position $(j,j)$), where
\[
C:\ \big[X^T T_1 X,\ X^T T_2 X,\ X^T T_3 X,\ \dots,\ X^T T_m X\big]
\sbmat{x_1\\x_2\\x_3\\ \vdots\\ \vdots\\ x_m}
= [4k-1],
\]
where $k\in\mathbb{Z}$, and $V$: is given by the diagonal matrix
\[
D_1 = \operatorname{diag}\Big((-1)^{x_1}a_{11},\, (-1)^{x_2}a_{22},\, (-1)^{x_3}a_{33},\, \dots,\, (-1)^{x_m}a_{mm}\Big) \in \mathbb{Z}^{m\times m},
\]
and
\[
Q: [X^T D_1 X] = [4t-1].
\]
Now, if there exists a common solution among $S_L\cap S_Q\cap S_C = S = \{s_1,s_2,s_3,\dots,s_N\}\in\mathbb{Z}^m$, and if $s_i=(x_1,x_2,\dots,x_m)\in S$, then there exists a curve, nonsingular curve of genus $g$, where
\[
C_{s_i}: My^2 = a_1x_1X^{x_1} + a_2x_2X^{x_2} + \cdots + a_mx_mX^{x_m}.
\]
If $y\ge 1$, $x>1$, $\forall(x,y)\in C_{s_i}(\mathbb{Z}^+)$ holds then
\[
C_{s_i}(\mathbb{Z}^+) = \emptyset
\]
for every $s_i$.
\end{theorem}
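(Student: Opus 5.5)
The plan is to reduce the statement to THEOREM~\ref{thm5.1}, and through it to THEOREM~\ref{thm3.2}. I treat $C_{s_i}$ as a curve of the shape in THEOREM~\ref{thm5.1}, with new coefficients $b_j := a_jx_j$ attached to the monomials $X^{x_j}$ and constant term $a_{m+1}=0$. As in the proof of THEOREM~\ref{thm5.1}, the curve $C_{s_i}$ is obtained from the general curve $My^2=\sum_{\ell=0}^{n}a_\ell x^{n-\ell}$ of THEOREM~\ref{thm3.2} by setting every coefficient to zero except those in positions $n-\ell = x_j$. Here $n=\max_j x_j$, and the common solution $s_i=(x_1,\dots,x_m)$ supplies the exponents. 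Under this embedding the three coefficient conditions of THEOREM~\ref{thm3.2} become three weighted sums over $j$:
\[
\sum_{j}(-1)^{x_j+1}b_j,\qquad \sum_j(-1)^{x_j+1}b_j \ (\text{omitting the constant}),\qquad \sum_j(-1)^{x_j-2}b_j\,x_j(x_j-2),
\]
the last of which is $\sum_j(-1)^{x_j}a_j x_j^3-2\sum_j(-1)^{x_j}a_jx_j^2$.

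In order, the steps are as follows. First, read off from the linear system $L$, whose two rows have equal entries in each column, the two identities $\sum_j(-1)^{x_j+1}a_jx_j = Mk^2$ and $=4t-1$. These are exactly the first two conditions of THEOREM~\ref{thm5.1} for the coefficients $b_j$. Second, identify the quadratic form $X^TD_1X=\sum_j(-1)^{x_j}a_{jj}x_j^2$ with $\sum_j(-1)^{x_j-2}b_jx_j$, which is the ``linear-in-weights'' condition of THEOREM~\ref{thm5.1}. Third, identify the cubic tensor contraction $[X^TT_jX]_j\cdot X=\sum_j(-1)^{x_j-2}a_jx_j^3$ with $\sum_j(-1)^{x_j-2}b_jx_j^2$, which is the ``quadratic-in-weights'' condition of THEOREM~\ref{thm5.1}. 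Fourth, having matched every hypothesis of THEOREM~\ref{thm5.1}, conclude $C_{s_i}(\mathbb{Z}^+)=\emptyset$ under the standing restriction $x>1$, $y\ge1$. Since the argument uses $s_i$ only through its coordinates as exponents, it applies verbatim to each of the $N$ common solutions.

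I expect the main obstacle to be the residue bookkeeping modulo $4$ in the third condition of THEOREM~\ref{thm3.2}. THEOREM~\ref{thm5.1} splits $4U-1$ into an even part $-2s$ coming from the linear-in-weights sum and a part $\equiv -1 \pmod 4$ coming from the quadratic-in-weights sum. In the present statement, however, the quadratic system $Q$ is normalised to $4t-1$, which is odd rather than of the form $-2s$. A naive combination then gives $(4k-1)-2(4t-1)\equiv 1\pmod 4$, not $-1$. So the first thing I would try is to re-derive the split directly from $\sum_j(-1)^{x_j}b_jx_j(x_j-2)$ with $b_j=a_jx_j$, determine which residues of the quadratic and cubic sums are actually required, and adjust the right-hand sides if needed.

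A second compatibility point needs checking as well. The two rows of $L$ force $Mk^2=4t-1$, while $Mk^2\equiv 1\pmod 4$ because all prime divisors of $M$ and $k$ are $\equiv 1 \pmod 4$. I would check whether the second row is meant to omit the constant term, as in THEOREM~\ref{thm3.2} where the sum up to $n-1$ differs from the sum up to $n$ by $\pm a_n$, and reinstate a nonzero $a_{m+1}$ if that is necessary for consistency.
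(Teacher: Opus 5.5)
Your route is the same as the paper's: place $b_j=a_jx_j$ at exponent $x_j$, then match the rows of $L$, the form $X^TD_1X$ and the cubic contraction against the hypotheses of THEOREM~\ref{thm5.1}, and hence THEOREM~\ref{thm3.2}. As a proof of the statement as written, it breaks at your second step, for the reason you yourself flag.

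THEOREM~\ref{thm5.1} needs $\sum_j(-1)^{x_j}b_jx_j=\sum_j(-1)^{x_j}a_jx_j^2$ to be even ($=-2s$). The statement instead prescribes $X^TD_1X=4t-1$, which is odd. The third condition of THEOREM~\ref{thm3.2} then comes out as $(4k-1)-2(4t-1)\equiv 1\pmod 4$, so the congruence $f(x)\equiv 3\pmod 4$ needed at $x\equiv 1\pmod 4$ is lost. The paper's proof has the identical defect: it derives $\sum_j(-1)^{x_j-2}b_jx_j^2=-2m$ and then writes $Q$ with right-hand side $4t-1$.

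The obstruction is worse than a residue mismatch. Since $C_{s_i}$ has no constant term, the first two conditions of THEOREM~\ref{thm3.2} have the same left-hand side. They would then require $M\alpha^2=4K-1$ for an integer $K$, which is impossible because $M\alpha^2\equiv 1\pmod 4$. So no curve of this shape can ever satisfy the hypotheses you are reducing to, and no reshuffling of residues will rescue the reduction. Changing right-hand sides or reinstating $a_{m+1}\neq 0$, as you suggest, alters the system and the curve, i.e.\ it proves a different theorem.

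What you are missing is to take your second observation to its conclusion instead of treating it as something to repair. Because $a_{1j}=a_{2j}$, the two rows of $L$ evaluate to the same integer $\sum_j(-1)^{x_j+1}a_{1j}x_j$. A solution of $L$ would therefore force $Mk^2=4t-1$. But $M$ is a product of primes $\equiv 1\pmod 4$, so $M\equiv 1\pmod 4$, and $k$ is odd, so $k^2\equiv 1\pmod 8$. Hence $Mk^2\equiv 1\not\equiv 3\equiv 4t-1\pmod 4$. Therefore $S_L=\emptyset$, so $S=S_L\cap S_Q\cap S_C=\emptyset$, and the conclusion ``$C_{s_i}(\mathbb{Z}^+)=\emptyset$ for every $s_i$'' holds vacuously. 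That short argument proves the statement exactly as given; the reduction to THEOREM~\ref{thm5.1} does not.
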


\begin{proof}
According to THEOREM~\ref{thm5.2}, let $C: My^2 = a_1X^{x_1}+a_2X^{x_2}+\cdots+a_mX^{x_m}+a_{m+1}$ be an algebraic curve with coefficients $a_0,a_1,a_2,\dots,a_m\in\mathbb{Z}$, be such that all the prime divisors of $M,K$ satisfy $p\equiv 1\pmod 4$. Then if
\[
\sum_{j=1}^{m}(-1)^{x_j+1}a_j = Mk^2, \qquad \sum_{j=1}^{m}a_j(-1)^{x_j+1} = 4k-1,
\]
\[
\sum_{j=1}^{m}(-1)^{x_j-2}a_jx_j = -2m, \qquad \sum_{j=1}^{m}(-1)^{x_j-2}a_jx_j^2 = 4c-1,
\]
and $y\ge 1$, $x>1$, $\forall(x,y)\in C(\mathbb{Z}^+)$ holds then $C(\mathbb{Z}^+)=\emptyset$.

Now, let $a_j = b_jx_j$ represent $j=1,2,3,\dots,m$, and $b_j,x_j\in\mathbb{Z}$; consequently, we obtain the following new equations:
\[
\sum_{j=1}^{m}(-1)^{x_j+1}b_jx_j = Mk^2, \qquad \sum_{j=1}^{m}(-1)^{x_j+1}b_jx_j = 4k-1,
\]
\[
\sum_{j=1}^{m}(-1)^{x_j-2}b_jx_j^2 = -2m, \qquad \sum_{j=1}^{m}(-1)^{x_j-2}b_jx_j^3 = 4c-1,
\]
where
\[
C: My^2 = a_1x_1X^{x_1}+a_2x_2X^{x_2}+\cdots+a_mx_mX^{x_m}.
\]
Next, we arrange all these equations in matrix form using linear algebra. Thus
\[
L:\ \sbmat{a_{11} & a_{12} & a_{13} & \cdots & a_{1m}\\ a_{21} & a_{22} & a_{23} & \cdots & a_{2m}}
\sbmat{(-1)^{x_1+1}x_1\\(-1)^{x_2+1}x_2\\ \vdots\\ \vdots\\ (-1)^{x_m+1}x_m}
=
\sbmat{mk^2\\4t-1}.
\]
In each column, any two consecutive entries are equal, $a_{1j}=a_{2j}$. And, with $D_1=\operatorname{diag}\big((-1)^{x_1-2}a_{11},(-1)^{x_2-2}a_{22},\dots,(-1)^{x_m-2}a_{mm}\big)\in\mathbb{Z}^{m\times m}$ and $t\in\mathbb{Z}$, where
\[
Q: [X^T D_1 X] = [4t-1],
\]
and, with $T_j\in\mathbb{Z}^{m\times m}$ the matrix whose only nonzero entry is $(-1)^{x_j-2}a_{jj}$ in position $(j,j)$ (as before), $[T_1,T_2,T_3,\dots,T_m]\in\mathbb{Z}^{m\times m\times m}$, where
\[
C:\ \big[X^TT_1X\ \ X^TT_2X\ \ X^TT_3X\ \cdots\ X^TT_mX\big]
\sbmat{x_1\\x_2\\x_3\\ \vdots\\ \vdots\\ x_m}
= [4k-1].
\]

Let $L: Ax=b$ and $Q: Px=d$ and $V: Bx=m$ be systems of linear and quadratic, and cubic equations, where $A\in M_{2\times m}(\mathbb{Z})$, $P\in M_{1\times m}(\mathbb{Z})$ and $B\in M_{1\times m}(\mathbb{Z})$, where all
coefficients in the system are equal. Then $A=(a_{ij})=P=(a_{ij})=B=(a_{ij})$, and $S_Q,S_L,S_V\in\mathbb{Z}^m$ are solutions of the system. Let $S_Q\cap S_L\cap S_V = s_1=(x_1,x_2,x_3,\dots,x_m)$ be a common solution; then, according to THEOREM~\ref{thm5.2}, there exists a curve
\[
C_{s_1}: My^2 = a_{11}x_1X^{x_1}+a_{12}x_2X^{x_2}+\cdots+a_{1m}x_mX^{x_m}.
\]
If $y\ge 1$, $x>1$, $\forall(x,y)\in C(\mathbb{Z}^+)$ then
\[
C_{s_1}(\mathbb{Z}^+) = \emptyset.
\]
Similarly, suppose there exists another solution -- let it be $S_L\cap S_Q\cap S_C = \{s_1,s_2,s_3,\dots,s_N\}\subset\mathbb{Z}^m$ where $s_i=(x_1,x_2,x_3,\dots,x_m)$; then, according to THEOREM~\ref{thm5.2}, there exists a curve
\[
C_{s_i}: My^2 = a_{11}x_1X^{x_1}+a_{12}x_2X^{x_2}+\cdots+a_{1m}x_mX^{x_m}.
\]
If $y\ge 1$, $x>1$, $\forall(x,y)\in C(\mathbb{Z}^+)$ then
\[
C_{s_i}(\mathbb{Z}^+) = \emptyset
\]
for every $s_i$.
\end{proof}

\begin{theorem}\label{thm5.4}
Let $L: Ax=b$ and $Q: Px=d$ and $V: Bx=m$ be systems of linear and quadratic, and cubic equations, where $A\in M_{2\times m}(\mathbb{Z})$, $P\in M_{1\times m}(\mathbb{Z})$ and $B\in M_{1\times m}(\mathbb{Z})$, where all coefficients in the system are equal. And $S_Q,S_L,S_V\in\mathbb{Z}^m$ are solutions of the system. Let each system satisfy the following:
\[
L:\ \sbmat{1 & 1 & 1 & \cdots & 1\\ 1 & 1 & 1 & \cdots & 1}
\sbmat{(-1)^{x_1+1}x_1\\(-1)^{x_2+1}x_2\\ \vdots\\ \vdots\\ (-1)^{x_m+1}x_m}
=
\sbmat{mk^2\\4t-1}.
\]
Moreover, $M,k$ are odd integers, and every prime divisor of $M,k$ is of the form $p=4n+1$, $t\in\mathbb{Z}$, and $Q: Px=d$ is $T=\{T_1,T_2,T_3,\dots,T_m\}\in\mathbb{Z}^{m\times m\times m}$ and $T_j$ is given by the matrix whose only nonzero entry is $(-1)^{x_j-2}$ in position $(j,j)$ (all other entries zero), for $j=1,2,\dots,m$,
where
\[
C:\ \big[X^TT_1X\ \ X^TT_2X\ \ X^TT_3X\ \cdots\ X^TT_mX\big]
\sbmat{x_1\\x_2\\x_3\\ \vdots\\ \vdots\\ x_m}
= [4k-1],
\]
and $k\in\mathbb{Z}$, and $V$: is given by $D_1=\operatorname{diag}\big((-1)^{x_1-2},(-1)^{x_2-2},(-1)^{x_3-2},\dots,(-1)^{x_m-2}\big)\in\mathbb{Z}^{m\times m}$, where $t\in\mathbb{Z}$ and
\[
Q: [X^TD_1X] = [4t-1].
\]
Now, if there exists a common solution among $S_L\cap S_Q\cap S_C = S=\{s_1,s_2,s_3,\dots,s_N\}\in\mathbb{Z}^m$ and if $s_i=(x_1,x_2,\dots,x_m)\in S$, then there exists a curve, nonsingular curve of genus $g$, where
\[
C_{s_i}: My^2 = x_1X^{x_1}+x_2X^{x_2}+\cdots+x_mX^{x_m}.
\]
If $y\ge1$, $x>1$, $\forall(x,y)\in C_{s_i}(\mathbb{Z}^+)$ holds then
\[
C_{s_i}(\mathbb{Z}^+) = \emptyset
\]
for every $s_i$.
\end{theorem}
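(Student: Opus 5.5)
The plan is to obtain THEOREM~5.4 as the special case of THEOREM~5.3 in which every coefficient is $1$: $a_{1j}=a_{2j}=a_{jj}=1$ for $j=1,\dots,m$. This is how THEOREM~4.5 was obtained from THEOREM~4.4.

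\textbf{Step 1: the specialization.} With all coefficients equal to $1$:
\begin{itemize}
\item the two rows of $L$ in THEOREM~5.3 become the all-ones rows of the statement;
\item each tensor slice $T_j$ has the single nonzero entry $(-1)^{x_j-2}$ in position $(j,j)$;
\item the diagonal matrix $D_1$ becomes $\operatorname{diag}\big((-1)^{x_1-2},\dots,(-1)^{x_m-2}\big)$.
\end{itemize}
The curve $C_{s_i}: My^2=a_1x_1X^{x_1}+\cdots+a_mx_mX^{x_m}$ of THEOREM~5.3 then turns into $My^2=x_1X^{x_1}+\cdots+x_mX^{x_m}$. For each common solution $s_i\in S_L\cap S_Q\cap S_C$ the conclusion $C_{s_i}(\mathbb{Z}^+)=\emptyset$, under $x>1$, $y\ge1$, would be inherited directly.

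\textbf{Step 2: a direct check of the hypotheses.} Before relying on that chain (THEOREM~5.3 $\Rightarrow$ 5.2 $\Rightarrow$ 5.1 $\Rightarrow$ 3.2), I would verify the conditions directly, since the specialization should be transparent. The point I expect to need care is the system $L$. Its two rows have identical left-hand sides, namely $\sum_j(-1)^{x_j+1}x_j$. So a common solution forces the two right-hand sides to agree:
\[
Mk^2=4t-1.
\]
Now $M$ and $k$ are odd, so $k^2\equiv1\pmod 4$. Every prime divisor of $M$ is $\equiv1\pmod 4$, so $M\equiv1\pmod4$. Hence $Mk^2\equiv1\pmod 4$, whereas $4t-1\equiv3\pmod4$. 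Therefore $S_L=\emptyset$, and the intersection $S_L\cap S_Q\cap S_C$ is empty.

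\textbf{Step 3: conclusion.} It follows that no curve $C_{s_i}$ is ever produced, and the assertion ``$C_{s_i}(\mathbb{Z}^+)=\emptyset$ for every $s_i$'' holds vacuously. This gives a self-contained proof of THEOREM~5.4 that does not depend on the arithmetic of THEOREM~2.3.

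\textbf{Main obstacle.} The difficulty is deciding which route to present. The formal specialization of THEOREM~5.3 is immediate, but it inherits every step of the earlier chain. The congruence obstruction in Step 2 is short and rigorous. I would present the specialization as the intended derivation, and give the mod-$4$ incompatibility of the two rows of $L$ as the argument that actually settles the statement.
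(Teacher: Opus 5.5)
Your Step 1 is exactly the paper's proof: the paper obtains Theorem 5.4 in one line by putting $a_{ij}=1$ in Theorem 5.3, so it rests on the whole chain 5.3 $\Rightarrow$ 5.2 $\Rightarrow$ 5.1 $\Rightarrow$ 3.2 $\Rightarrow$ 2.3.

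The argument you actually rely on, Step 2, is a different and far more elementary route. Both rows of $L$ have the same left-hand side $\sum_j(-1)^{x_j+1}x_j$, so a common solution would force $Mk^2=4t-1$. That is impossible modulo $4$, hence $S_L=\emptyset$ and the conclusion holds vacuously. This is correct and uses nothing from Sections 2--3.

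It also exposes something the paper's route conceals: the statement has no content. The same collision already occurs in Theorems 5.2 and 5.3, where the two rows of $L$ coincide ($a_{1j}=a_{2j}$) once the constant term $a_{m+1}$ of Theorem 5.1 is dropped. Conceptually, the first two coefficient conditions of Theorem 3.2 force the constant term to be $a_n=4k-1-M\alpha^2\equiv 2\pmod 4$. So they can never hold for a curve with zero constant term such as $C_{s_i}$. The paper's specialization is formally valid, but it passes along a vacuous implication without noticing that its hypotheses are inconsistent. Your route buys both independence from the earlier chain and that insight.

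One point to state explicitly: your congruence uses $M\ge 1$ (to get $M\equiv 1\pmod 4$) and reads the entry $mk^2$ of $L$ as $Mk^2$. Both are hypotheses of Theorem 5.3, from which the statement is derived. Taken literally (``$M,k$ are odd integers'', lowercase $m$), the obstruction fails, e.g.\ for $M=-1$, or when $m\equiv 3\pmod 4$.
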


\begin{proof}
According to THEOREM~\ref{thm5.3}, all elements of the system are equal: $A=(a_{ij})=P=(a_{ij})=B=(a_{ij})$ where $L: Ax=b$, $Q: Px=d$, and $V: Bx=m$; thus, let each element equal 1, where $a_{ij}=1$ for all $i,j$. This completes the proof.
\end{proof}

\section{Common Solutions of Algebraic Curves}

In this section, we generalize the relationship between solutions of linear equations and solutions of algebraic curves to a relationship among algebraic curves. Specifically, we consider multivariable polynomial curves and algebraic curves in two variables. We prove that if three curves in several variables admit a common solution in the integers, then one can construct a third algebraic curve that has no positive integer solutions. The
necessary conditions on the three curves, as well as the form of the resulting algebraic curve with no such solutions, are described in THEOREM 6.1 and 6.3.

\begin{theorem}\label{thm6.1}
Let $C_{P_i}$ be an algebraic curve, nonsingular curve of genus $g\ge 0$, where $\deg C_{P_i}=n$ and $\epsilon_j=(n-j)(n-j-2)$, where $b_0,b_1,\dots,b_n\in\mathbb{Z}$ are the coefficients of the curve and $U,K\ge 0\in\mathbb{Z}$, where $a,M\ge 1$ are odd numbers, all the prime divisors of which are of the form $p=4n+1$. Where $f_j\in\mathbb{Z}(x_1,x_2,x_3,\dots,x_n)$ and $T=\{C_1,C_2,C_3\}$, where $V(T)=\{P\in\mathbb{A}^n \mid C(P)=0 \text{ for all } C\in T\}$. Then, if
\[
C_1(P_i) = \sum_{j=0}^{n-1}(-1)^{n-j+1}b_jf_j(P_i) - 4k+1 = 0,
\]
\[
C_2(P_i) = \sum_{j=0}^{n}(-1)^{n-j+1}b_jf_j(P_i) - Mk^2 = 0,
\]
\[
C_3(P_i):\ \sum_{j=0}^{n-3}(-1)^{n-j-2}\epsilon_jb_jf_j(P_i) + b_{n-1}f_{n-1}(P_i) - 4U+1 = 0.
\]
If $P_i\in V(T)$, then there exists a curve
\[
C_{P_i}: MY^2 = b_0f_0(P_i)X^n + b_1f_1(P_i)X^{n-1} + b_3f_3(P_i)X^{n-2}+\cdots+b_nf_n(P_i).
\]
And if $y\ge1$, $x>1$, $\forall(x,y)\in C(\mathbb{Z}^+)$ holds then
\[
C_{P_i}(\mathbb{Z}^+) = \emptyset
\]
for every $P_i\in V(T)$.
\end{theorem}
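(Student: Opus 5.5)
The plan is to reduce the statement to THEOREM~\ref{thm3.2}, as was done for the linear-system versions in THEOREM~\ref{thm4.1}. Fix a point $P_i\in V(T)$. Each $f_j$ has integer coefficients and $P_i$ is an integral point, so the numbers $a_j:=b_jf_j(P_i)$, $j=0,1,\dots,n$, are integers. We then regard
\[
C_{P_i}: MY^2=a_0X^n+a_1X^{n-1}+\cdots+a_n
\]
as a curve of exactly the shape treated in THEOREM~\ref{thm3.2}. Here we read the displayed curve in the statement with coefficient $b_jf_j(P_i)$ on $X^{n-j}$ for every $j$; the index ``$b_3f_3$'' on $X^{n-2}$ is taken to be $b_2f_2$.

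The key step is to translate the three vanishing conditions into the three coefficient hypotheses of THEOREM~\ref{thm3.2}.
\begin{itemize}
\item $C_1(P_i)=0$ says $\sum_{j=0}^{n-1}(-1)^{n-j+1}a_j=4k-1$, which is the first congruence condition.
\item $C_2(P_i)=0$ says $\sum_{j=0}^{n}(-1)^{n-j+1}a_j=Mk^2$. This is the normalisation $\sum(-1)^{n-j+1}a_j=M\alpha^2$, with $\alpha$ played by the odd constant whose prime divisors are $\equiv1\pmod 4$. I would rename it $\alpha$ (the statement writes it as $a$ or $k$) to avoid a clash with the $k$ in $4k-1$.
\item $C_3(P_i)=0$ says $\sum_{j=0}^{n-3}(-1)^{n-j-2}(n-j)(n-j-2)a_j+a_{n-1}=4U-1$, since $\epsilon_j=(n-j)(n-j-2)$. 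This is the remaining hypothesis.
\end{itemize}
The standing assumptions on $M$ and $\alpha$ (odd, $\ge1$, all prime factors $\equiv1\pmod4$) are carried over unchanged. THEOREM~\ref{thm3.2} then gives $C_{P_i}(\mathbb{Z}^+)=\emptyset$ under the restriction $x>1$, $y\ge1$.

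Since $P_i\in V(T)$ was arbitrary, the conclusion holds for every common zero. If $V(T)$ is infinite, this gives an infinite family, exactly as in THEOREM~\ref{thm4.6}.

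I expect the main obstacle to be bookkeeping rather than new mathematics. One must check that the sign patterns $(-1)^{n-j+1}$ and $(-1)^{n-j-2}$ and the index ranges in $C_1,C_2,C_3$ match those of THEOREM~\ref{thm3.2} term for term. One must also settle the notational overloading of $k$, as above. If the $\epsilon_j$-weighted sum in $C_3$ turned out to be shifted by one index relative to (2.8), I would absorb the shift by relabelling the $f_j$ before invoking THEOREM~\ref{thm3.2}.

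A further caveat concerns the degree. The value $f_0(P_i)$ might vanish, so for the statement $\deg C_{P_i}=n$ I would add the harmless hypothesis $b_0f_0(P_i)\neq0$; otherwise the curve has lower degree. Nonsingularity of $C_{P_i}$ is assumed in the statement and is not needed for the arithmetic argument.
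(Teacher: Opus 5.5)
Your proposal is correct and is essentially the paper's own proof: set $a_j=b_jf_j(P_i)$, observe that $C_1(P_i)=C_2(P_i)=C_3(P_i)=0$ are exactly the three coefficient hypotheses of THEOREM~\ref{thm3.2}, with the constant in $Mk^2$ playing the role of $\alpha$, and apply that theorem separately to each $P_i\in V(T)$. The signs and index ranges in $C_3$ already match the third hypothesis of THEOREM~\ref{thm3.2} term for term, so your relabelling fallback is never needed — and it would not have been harmless anyway, since relabelling the $f_j$ changes the curve and the other two conditions.
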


\begin{proof}
According to THEOREM~\ref{thm3.2}, let $C: My^2 = a_0x^n+a_1x^{n-1}+\cdots+a_{n-1}x+a_n$ be an algebraic curve, nonsingular of genus $g$, with coefficients $a_0,a_1,a_2,\dots,a_n\in\mathbb{Z}$, where $M,k\ge 1$ are odd constants, such that all the prime divisors of $M,K$ satisfy $p\equiv 1\pmod 4$, and $C(\mathbb{Z}^+)$ are the positive integer solutions for curve $C$. Then if
\[
\sum_{j=0}^{n}(-1)^{n-j+1}a_j = Mk^2, \qquad \sum_{j=0}^{n-1}a_j(-1)^{n-j+1} = 4k-1,
\]
\[
\sum_{j=0}^{n-3}(-1)^{n-j-2}a_j(n-j)(n-j-2)+a_{n-1} = 4U-1, \qquad U,K\ge 0\in\mathbb{Z},
\]
if $y\ge1$, $x>1$, $\forall(x,y)\in C(\mathbb{Z}^+)$ holds then $C(\mathbb{Z}^+)=\emptyset$.

Now assume let $a_0=b_0f_0$ where $f_0\in\mathbb{Z}(x_1,x_2,x_3,\dots,x_n)$, and in the same manner let $a_1=b_1f_1$ where $f_1\in\mathbb{Z}(x_1,x_2,x_3,\dots,x_n)$. Now assume that for each $a_j=b_jf_j$, $j=1,2,\dots,n$ where $b_j\in\mathbb{Z}$ and $f_j\in\mathbb{Z}(x_1,x_2,x_3,\dots,x_n)$. As a result, we obtain the following equations
\[
\sum_{j=0}^{n}(-1)^{n-j+1}b_jf_j = Mk^2, \qquad \sum_{j=0}^{n-1}(-1)^{n-j+1}b_jf_j = 4k-1,
\]
\[
\sum_{j=0}^{n-3}(-1)^{n-j-2}(n-j)(n-j-2)b_jf_j + b_{n-1}f_{n-1} = 4U-1.
\]
Now assume that there exists a common solution to all of the curves $P\in\mathbb{A}^n_{\mathbb{Z}}$:
\[
C_1(P) = \sum_{j=0}^{n-1}(-1)^{n-j+1}b_jf_j(P) - 4k+1 = 0,
\]
\[
C_2(P) = \sum_{j=0}^{n}(-1)^{n-j+1}b_jf_j(P) - Mk^2 = 0,
\]
\[
C_3(P):\ \sum_{j=0}^{n-3}(-1)^{n-j-2}\epsilon_jb_jf_j(P) + b_{n-1}f_{n-1}(P) - 4U+1 = 0.
\]
Let $T=\{C_1,C_2,C_3\}$. Now, suppose the following exists: $V(T)=\{P\in\mathbb{A}^n \mid C(P)=0 \text{ for all } C\in T\}$ and $\epsilon_j=(n-j)(n-j-2)$. This means there is a solution for curves $C_1(P)$ and $C_2(P)$. Therefore, this satisfies the conditions of THEOREM~\ref{thm3.2} if $f_j(P)=a_j$. Thus, there exists a curve of degree $\deg C=n$, where
\[
C_P: MY^2 = b_0f_0(P)X^n + b_1f_1(P)X^{n-1}+b_3f_3(P)X^{n-2}+\cdots+b_nf_n(P).
\]
Then, if $y\ge1$, $x>1$, $\forall(x,y)\in C(\mathbb{Z}^+)$ holds then $C(\mathbb{Z}^+)=\emptyset$.

Now suppose there is another solution, $V(T)=\{P_1,P_2,\dots,P_N\}$. Then there are several curves that satisfy the conditions of THEOREM~\ref{thm3.2}:
\[
C_{P_i}: MY^2 = b_0f_0(P_i)X^n + b_1f_1(P_i)X^{n-1}+b_3f_3(P_i)X^{n-2}+\cdots+b_nf_n(P_i).
\]
And if $y\ge1$, $x>1$, $\forall(x,y)\in C(\mathbb{Z}^+)$ then
\[
C_{P_i}(\mathbb{Z}^+) = \emptyset
\]
for every $P_i\in V(T)$.
\end{proof}

\begin{theorem}\label{thm6.2}
Let $C_{P_i}$ be an algebraic curve, nonsingular of genus $g$, where $\deg C_{P_i}=n$ and $\epsilon_j=(n-j)(n-j-2)$, where $b_0,b_1,\dots,b_n\in\mathbb{Z}$ are the coefficients of the curve and $U,K\ge 0\in\mathbb{Z}$, where $a,M\ge 1$ are odd numbers, all the prime divisors of which are of the form $p=4n+1$. Where $f_j\in\mathbb{Z}(x_1,x_2,x_3,\dots,x_n)$ and $T=\{C_1,C_2,C_3\}$, where $V(T)=\{P\in\mathbb{A}^n \mid C(P)=0 \text{ for all } C\in T\}$. Then, if
\[
C_1(P_i) = \sum_{\substack{v_0+v_1+\cdots+v_k=n\\ 0\le j\le n-1}} (-1)^{n-j+1}b_ja_{v_0+\cdots+v_k}X^{v_0}X^{v_1}\cdots X^{v_k} - 4k+1 = 0,
\]
\[
C_2(P_i) = \sum_{\substack{v_0+v_1+\cdots+v_k=n\\ 0\le j\le n}} (-1)^{n-j+1}b_ja_{v_0+\cdots+v_k}X^{v_0}X^{v_1}\cdots X^{v_k} - Mk^2 = 0,
\]
\[
C_3(P):\ \sum_{\substack{v_0+v_1+\cdots+v_k=n\\ 0\le j\le n-3}} (-1)^{n-j+1}\epsilon_jb_ja_{v_0+\cdots+v_k}X^{v_0}X^{v_1}\cdots X^{v_k} + b_{n-1}a_{v_0+\cdots+v_k}X^{v_0}X^{v_1}\cdots X^{v_k} - 4U+1 = 0.
\]
If $P_i\in V(T)$, and $f_j(p_i) = \Big[b_ja_{v_0+\cdots+v_k}X^{v_0}X^{v_1}\cdots X^{v_k}\Big]^{n}_{\substack{j=0\\ v_0+v_1+\cdots+v_k=n}}$, then there exists a curve
\[
C_{P_i}: MY^2 = b_0f_0(P_i)X^n + b_1f_1(P_i)X^{n-1} + b_3f_3(P_i)X^{n-2}+\cdots+b_nf_n(P_i).
\]
And if $y\ge1$, $x>1$, $\forall(x,y)\in C(\mathbb{Z}^+)$ holds then
\[
C_{P_i}(\mathbb{Z}^+) = \emptyset
\]
for every $P_i\in V(T)$.
\end{theorem}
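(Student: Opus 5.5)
The plan is to deduce THEOREM~\ref{thm6.2} directly from THEOREM~\ref{thm6.1}, which in turn rests on THEOREM~\ref{thm3.2}. THEOREM~\ref{thm6.2} should be the special case of THEOREM~\ref{thm6.1} in which each $f_j\in\mathbb{Z}(x_1,\dots,x_n)$ is written in monomial form. So I would fix, for each $j$, the polynomial $f_j$ given by the bracketed expression
\[
\Big[a_{v_0+\cdots+v_k}X^{v_0}X^{v_1}\cdots X^{v_k}\Big]_{v_0+\cdots+v_k=n}.
\]
I would then check that, with this choice, the three defining equations $C_1,C_2,C_3$ of THEOREM~\ref{thm6.2} coincide term by term with the three equations $C_1,C_2,C_3$ of THEOREM~\ref{thm6.1}.

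The key steps, in order, are as follows.

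First, I would expand $\sum_{j}(-1)^{n-j+1}b_jf_j(P)$ as a sum over multi-indices. This identifies the left-hand sides of $C_1$ and $C_2$ with those in THEOREM~\ref{thm6.1}, and hence with the two conditions
\[
\sum_{j=0}^{n-1}(-1)^{n-j+1}a_j=4k-1,\qquad \sum_{j=0}^{n}(-1)^{n-j+1}a_j=Mk^2,
\]
once I set $a_j:=b_jf_j(P_i)$.

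Second, I would do the same for $C_3$, obtaining
\[
\sum_{j=0}^{n-3}(-1)^{n-j-2}\epsilon_ja_j+a_{n-1}=4U-1 .
\]

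Third, given $P_i\in V(T)$, the evaluations $f_j(P_i)$ are integers. So the integers $a_j=b_jf_j(P_i)$ satisfy all three hypotheses of THEOREM~\ref{thm3.2}, with $M,k$ odd and all of their prime divisors $\equiv 1\pmod 4$.

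Fourth, THEOREM~\ref{thm3.2} (equivalently THEOREM~\ref{thm6.1}) then gives $C_{P_i}(\mathbb{Z}^+)=\emptyset$ under the standing restriction $x>1$, $y\ge 1$. Since $P_i$ is arbitrary, the conclusion holds for every $P_i\in V(T)$.

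The main obstacle is bookkeeping: making the notation of the statement consistent with THEOREM~\ref{thm6.1}. Three points need care.
\begin{enumerate}
\item The bracket defining $f_j(P_i)$ already carries the factor $b_j$, while the curve $C_{P_i}$ multiplies by $b_j$ again. I would normalise by reading the bracket as defining $f_j$ \emph{without} the factor $b_j$, so that the coefficient of $X^{n-j}$ is exactly the $a_j$ entering $C_1,C_2,C_3$.
\item The sign in $C_3$ is printed as $(-1)^{n-j+1}$, whereas THEOREM~\ref{thm3.2} uses $(-1)^{n-j-2}=-(-1)^{n-j+1}$. I would rewrite $C_3$ with the latter sign, or equivalently absorb the sign into $b_j$ for $j\le n-3$, so that it matches the third condition of THEOREM~\ref{thm3.2}.
\item The same letter $k$ appears both in $4k-1$ and in $Mk^2$. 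I would rename one of them, since only the congruence $4k-1\equiv 3\pmod 4$ and the arithmetic nature of $Mk^2$ are used.
\end{enumerate}
With these identifications, nothing beyond THEOREM~\ref{thm6.1} is needed.
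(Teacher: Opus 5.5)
Your route is the paper's: its proof of Theorem~\ref{thm6.2} is the single line ``substitute the bracketed monomial expression for $f_j$ into Theorem~\ref{thm6.1}'', and your steps 1--4 are that substitution written out and fed into Theorem~\ref{thm3.2}. Your item 1 (dropping the duplicated $b_j$) and item 3 (renaming $k$) are sensible repairs that the paper's one-liner silently skips.

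\textbf{Item 2 is not bookkeeping.} Your two remedies there are also not equivalent. With the printed sign $(-1)^{n-j+1}$ in $C_3$, the statement is false. Take $n=3$, $M=k=1$, all $b_j=1$, and $f_j(P_i)=a_j$ with $(a_0,a_1,a_2,a_3)=(-1,58,62,2)$. Then:
\begin{itemize}
\item $a_0-a_1+a_2=3=4\cdot 1-1$;
\item $a_0-a_1+a_2-a_3=1=Mk^2$;
\item the printed third condition reads $3a_0+a_2=59=4\cdot 15-1$.
\end{itemize}
Yet $y^2=-x^3+58x^2+62x+2$ passes through $(49,157)$. The third condition of Theorem~\ref{thm3.2} would instead be $-3a_0+a_2=65\not\equiv 3\pmod 4$, so that theorem does not apply.

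So you must replace $(-1)^{n-j+1}$ by $(-1)^{n-j-2}$ in $C_3$. You should also say explicitly that you are proving this corrected statement, not the printed one. ``Absorbing the sign into $b_j$ for $j\le n-3$'' does not achieve this. It also flips those terms in $C_1$, $C_2$ and in the coefficients $b_jf_j(P_i)$ of $C_{P_i}$. The first two hypotheses of Theorem~\ref{thm3.2} for the resulting curve are then no longer the ones you were given.

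\textbf{Integrality in step 3.} Your claim that ``the evaluations $f_j(P_i)$ are integers'' needs $P_i\in\mathbb{Z}^n$. The statement does not say this: it writes $\mathbb{A}^n$, and only the paper's proof of Theorem~\ref{thm6.1} works in $\mathbb{A}^n_{\mathbb{Z}}$. This is not cosmetic either. Take $f_j(P_i)=(\tfrac12,-2,\tfrac12,2)$ and $M=k=1$. All three corrected conditions hold, with $U=0$. But $y^2=\tfrac12x^3-2x^2+\tfrac12x+2$ contains the point $(4,2)$.

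With both corrections (the sign in $C_3$ and $P_i\in\mathbb{Z}^n$) stated as hypotheses, your reduction to Theorem~\ref{thm6.1} and Theorem~\ref{thm3.2} is complete.
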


\begin{proof}
Let in THEOREM~\ref{thm6.1}, $f_j(p_i) = \Big[b_ja_{v_0+\cdots+v_k}X^{v_0}X^{v_1}\cdots X^{v_k}\Big]^{n}_{\substack{j=0\\ v_0+v_1+\cdots+v_k=n}}$. This completes the proof.
\end{proof}

\begin{theorem}\label{thm6.3}
Let $C_{P_i}$ be an algebraic curve where $\deg C_{P_i}=n$ and $\epsilon_j=(n-j)(n-j-2)$, where $b_0,b_1,\dots,b_n\in\mathbb{Z}$ are the coefficients of the curve and $U,K\ge 0\in\mathbb{Z}$, where $a,M$ are odd numbers, all the prime divisors of which are of the form $p=4n+1$. Where $f_j\in\mathbb{Z}(x_1,x_2,x_3,\dots,x_n)$ and $T=\{C_1,C_2\}$, where $V(T)=\{P\in\mathbb{A}^n \mid C(P)=0 \text{ for all } C\in T\}$. Then, if
\[
\sum_{j=0}^{n}(-1)^{f_j(P)+1}b_j = Mk^2, \qquad \sum_{j=0}^{n}(-1)^{f_j(P)+1}b_j = 4k-1,
\]
and
\[
C_1(P) = \sum_{j=1}^{n}(-1)^{f_j(P)-2}b_jf_j^{2}(P) - 4m+1 = 0,
\]
\[
C_2(P) = \sum_{j=1}^{n}(-1)^{f_j(P)-2}b_jf_j(P) + 2s = 0.
\]
If $f_j(P)=a$, then $a\in\mathbb{N}$; since $f_j(P)$ is always positive, a curve exists:
\[
C_{P_i}: MY^2 = a_1X^{f_1(P)}+a_1X^{f_2(P)}+a_2X^{f_3(P)}+\cdots+a_nX^{f_n(P)}.
\]
And if $y\ge1$, $x>1$, $\forall(x,y)\in C(\mathbb{Z}^+)$ then
\[
C_{P_i}(\mathbb{Z}^+) = \emptyset
\]
for every $P_i\in V(T)$.
\end{theorem}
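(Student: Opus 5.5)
The plan is to reduce THEOREM~\ref{thm6.3} to THEOREM~\ref{thm5.1}, in the same way that THEOREM~\ref{thm6.1} was reduced to THEOREM~\ref{thm3.2}. THEOREM~\ref{thm5.1} concerns a curve $MY^2 = a_1X^{x_1}+\cdots+a_mX^{x_m}+a_{m+1}$ whose \emph{exponents} $x_j$ satisfy four conditions:
\begin{itemize}
\item a linear condition with right-hand side $Mk^2$;
\item a linear condition with right-hand side $4k-1$;
\item a weighted linear condition $\sum(-1)^{x_j-2}a_jx_j=-2s$;
\item a quadratic condition $\sum(-1)^{x_j-2}a_jx_j^2=4c-1$.
\end{itemize}
In THEOREM~\ref{thm6.3} these exponents are not free integers but values $f_j(P)$ of polynomials $f_j\in\mathbb{Z}(x_1,\dots,x_n)$ at a point $P$. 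So I would set $x_j:=f_j(P)$ and $a_j:=b_j$ for $j=1,\dots,n$, and then check that each hypothesis of THEOREM~\ref{thm5.1} becomes one of the hypotheses of THEOREM~\ref{thm6.3}.

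First, fix $P\in V(T)$. Then $C_1(P)=0$ says exactly $\sum_{j}(-1)^{f_j(P)-2}b_jf_j(P)^2=4m-1$, which is the quadratic condition of THEOREM~\ref{thm5.1} with $c=m$. Likewise $C_2(P)=0$ says $\sum_j(-1)^{f_j(P)-2}b_jf_j(P)=-2s$, which is the weighted linear condition. The two remaining linear conditions, with right-hand sides $Mk^2$ and $4k-1$, are assumed directly in the statement with $x_j=f_j(P)$. Second, the hypothesis $f_j(P)=a\in\mathbb{N}$ guarantees that every exponent $X^{f_j(P)}$ is a genuine nonnegative power. This is needed for $C_{P_i}$ to be a polynomial curve of degree $\max_j f_j(P)$, as in THEOREM~\ref{thm5.1}. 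Third, applying THEOREM~\ref{thm5.1} to $C_{P}:MY^2=\sum_j b_jX^{f_j(P)}$ gives $C_P(\mathbb{Z}^+)=\emptyset$ under $x>1$, $y\ge1$. Since $P$ was an arbitrary point of $V(T)$, the conclusion holds for every $P_i\in V(T)$, exactly as in the last paragraph of the proof of THEOREM~\ref{thm6.1}.

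I expect the main obstacle to be reconciling the two linear conditions. As written, the same sum $\sum_j(-1)^{f_j(P)+1}b_j$ is required to equal both $Mk^2$ and $4k-1$. These cannot both hold, since $Mk^2\equiv1\pmod4$ while $4k-1\equiv3\pmod4$. In THEOREM~\ref{thm5.1} they are distinguished by the constant term $a_{m+1}$: the first condition subtracts $a_{m+1}$. I would therefore first try to read the first condition as including the constant term $b_0$ (index $j=0$, with $f_0(P)=0$) and the second as summing over $j\ge1$ only. This matches the pattern of THEOREM~\ref{thm3.2}, where one sum runs to $n$ and the other to $n-1$.

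Under that reading, the reduction to THEOREM~\ref{thm5.1}, and through it to THEOREM~\ref{thm3.2}, goes through formally. The further delicate point is the step inherited from the proof of THEOREM~\ref{thm5.1}: zero coefficients are inserted so that a sparse exponent pattern $\{f_j(P)\}$ fits the dense coefficient conditions of THEOREM~\ref{thm3.2}. I would verify that step separately for exponents $f_j(P)$ that may coincide or be unordered, merging equal exponents by adding their coefficients $b_j$ before invoking THEOREM~\ref{thm5.1}.
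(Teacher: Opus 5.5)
Your proposal is correct and is essentially the paper's proof. The paper starts from Theorem 3.2 and repeats inline, with $f_j(P)$ in place of $x_j$, the zero-padding of missing exponents and the split $4U-1=4(m+s)-1$ that make up the proof of Theorem 5.1, so citing Theorem 5.1 is the same route in compressed form (its stated hypothesis $s\neq 0$ is never used in its proof, so it costs you nothing). Your observation that the two printed linear conditions are one and the same sum set equal to both $Mk^2$ and $4k-1$, which are incompatible modulo $4$, is right and is not addressed in the paper, whose proof carries the identical sums over unchanged; your repair (constant term $b_0$ with exponent $0$ only in the first sum) is what makes the reduction valid, provided you use the statement's positivity $f_j(P)\ge 1$ for $j\ge 1$ rather than mere nonnegativity, since an exponent-$0$ term with $j\ge 1$ would wrongly enter the $4k-1$ condition.
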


\begin{proof}
According to THEOREM~\ref{thm3.2}, let $C: My^2=a_0x^n+a_1x^{n-1}+\cdots+a_{n-1}x+a_n$ be an algebraic curve, nonsingular of genus $g$, with coefficients $a_0,a_1,a_2,\dots,a_n\in\mathbb{Z}$, where $M,k$ are odd constants, such that all the prime divisors of $M,K$ satisfy $p\equiv 1\pmod 4$, and $C(\mathbb{Z}^+)$ are the positive integer solutions for curve $C$. Then, if
\[
\sum_{j=0}^{n}(-1)^{n-j+1}a_j = Mk^2, \qquad \sum_{j=0}^{n-1}a_j(-1)^{n-j+1} = 4k-1,
\]
\[
\sum_{j=0}^{n-3}(-1)^{n-j-2}a_j(n-j)(n-j-2)+a_{n-1} = 4U-1, \qquad U,K\ge 0\in\mathbb{Z},
\]
if $y\ge1$, $x>1$, $\forall(x,y)\in C(\mathbb{Z}^+)$ holds then $C(\mathbb{Z}^+)=\emptyset$. Note that in the following equation, $n$ represents the degree of the curve, $\deg C=n$:
\[
\sum_{j=0}^{n-3}(-1)^{n-j-2}a_j(n-j)(n-j-2)+a_{n-1} = 4U-1.
\]
Now assume that $a_j$ takes the following value: $a_0=a_1=a_2=a_3=\cdots=a_{k-1}=0$. As a result, we obtain
\[
a_0(n)(n-2) = a_1(n-1)(n-3) = a_2(n-2)(n-4) = \cdots = a_{k-1}(n-k+1)(n-k-1) = 0.
\]
Now assume $a_k(n-k) = b_1f_1$. So
\[
a_k(n-k)(n-k-2) = b_1f_1(f_1-2).
\]
In the same manner, let $a_{k+1}=a_{k+2}=a_{k+3}=a_{k+4}=\cdots=a_{k+r}=0$. As a result, we obtain
\[
a_{k+1}(n-k-1)(n-k-3) = a_{k+2}(n-k-2)(n-k-4) = a_{k+3}(n-k-3)(n-k-5) = \cdots = a_{k+r-1}(n-k-r+1)(n-k-r-2) = 0.
\]
Now assume $a_{k+r}(n-k-r) = b_2f_2$ where $f_2\in\mathbb{Z}(x_1,x_2,\dots,x_n)$. In the same manner, we iterate this process for certain values of $a_j$ until we obtain $f_1,f_2,f_3,\dots,f_n\in\mathbb{Z}(x_1,x_2,\dots,x_n)$. Now the equations are equal to
\[
\sum_{j=0}^{n}(-1)^{f_j-2}b_jf_j(f_j-2) = 4U-1,
\]
\[
\sum_{j=0}^{n}(-1)^{f_j+1}b_j = Mk^2, \qquad \sum_{j=0}^{n}(-1)^{f_j+1}b_j = 4k-1.
\]
Then
\[
\sum_{j=1}^{n}(-1)^{f_j-2}b_jf_j(f_j-2) = \sum_{j=1}^{n}(-1)^{f_j-2}b_jf_j^2 - 2\sum_{j=1}^{n}(-1)^{f_j-2}b_jf_j = 4U-1 = 4m+4s-1.
\]
Then let
\[
\sum_{j=1}^{n}(-1)^{f_j-2}b_jf_j^2 = 4m-1, \qquad \sum_{j=1}^{n}(-1)^{f_j-2}b_jf_j = -2s,
\]
\[
\sum_{j=0}^{n}(-1)^{f_j+1}b_j = Mk^2, \qquad \sum_{j=0}^{n}(-1)^{f_j+1}b_j = 4k-1.
\]
Now let $\mathbb{A}^n_{\mathbb{Z}} = \{(a_1,a_2,a_3,\dots,a_n)\mid a_i\in\mathbb{Z}\}$ and $T=\{C_1,C_2\}$ where
\[
C_1(P) = \sum_{j=1}^{n}(-1)^{f_j(P)-2}b_jf_j^{2}(P) - 4m+1 = 0.
\]
\[
C_2(P) = \sum_{j=1}^{n}(-1)^{f_j(P)-2}b_jf_j(P) + 2s = 0,
\]
\[
\sum_{j=0}^{n}(-1)^{f_j(P)+1}b_j = Mk^2, \qquad \sum_{j=0}^{n}(-1)^{f_j(P)+1}b_j = 4k-1.
\]
Now assume there is a solution to all those equations where $V(T)=\{P\in\mathbb{A}^n \mid C(P)=0 \text{ for all } C\in T\}$, on the condition that $f_j(P)=a\in\mathbb{N}$; this means $f_j(P)$ is always positive; thus, a curve exists:
\[
C_{P_i}: MY^2 = a_1X^{f_1(P)}+a_1X^{f_2(P)}+a_2X^{f_3(P)}+\cdots+a_nX^{f_n(P)}.
\]
And if $y\ge1$, $x>1$, $\forall(x,y)\in C(\mathbb{Z}^+)$ then
\[
C_{P_i}(\mathbb{Z}^+) = \emptyset
\]
for every $P_i\in V(T)$.
\end{proof}

\section{Intersections of Algebraic Curves}

In this section, we study the classification of algebraic curves that have no common intersection in the set of positive integers. We prove that, under certain prescribed properties of a set of integers, one can construct three algebraic curves from these integers such that the intersection of their solution sets over the positive integers is empty. In other words, the three curves have no common positive integer solution; this result is established in THEOREM 7.1 and 7.2. We then extend this result by incorporating tools from linear algebra to generate different classes of sets of integers that can be used in the classification of algebraic curves with no common intersection.

\begin{theorem}\label{thm7.1}
Let $C_1,C_2,C_3$ be three algebraic curves over $\mathbb{Z}$ with common coefficients $a_0,a_1,a_2,\dots,a_{2m}\in\mathbb{Z}$. Let $\deg C_1=2m$ and $\deg C_2=2m-1$ and $\deg C_3=2m-1$, and $C_1(\mathbb{Z}^+),C_2(\mathbb{Z}^+),C_3(\mathbb{Z}^+)$ are positive integer solutions to $C_1,C_2,C_3$ respectively. Then, if
\[
Mk^2 = -a_0+a_1-a_2-\cdots-a_{2m-1}-a_{2m}.
\]
And if $y\ge1$, $x>1$, $\forall(x,y)\in C_1(\mathbb{Z}^+),C_2(\mathbb{Z}^+),C_3(\mathbb{Z}^+)$, and $k\ne 0\in\mathbb{Z}$, where $M\ge 1$ is an odd positive integer, all prime divisors of $M$ of the form $p=4n+1$, then
\[
C_1(\mathbb{Z}^+)\cap C_2(\mathbb{Z}^+)\cap C_3(\mathbb{Z}^+) = \emptyset,
\]
where
\[
C_1: MY^4 + 2MY^2 = \sum_{j=0}^{2m}a_jX^j - M,
\]
\[
C_2: 4DY^p = \sum_{j=0}^{2m-1}a_jX^j + 1,
\]
\[
C_3: 4CY^q = \sum_{j=0}^{2m-3}\epsilon_ja_jX^j - a_{2m-1}X^{2m-1} - 1,
\]
and $q,p\ge 0\in\mathbb{N}$ where $C,D\in\mathbb{Z}$ and $\epsilon_j=(2m-j)(2m-j-2)$.
\end{theorem}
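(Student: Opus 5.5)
The plan is to argue by contradiction and reduce to the arithmetic mechanism behind THEOREM~\ref{thm2.3} and THEOREM~\ref{thm3.2}. Suppose $(x,y)$ with $x>1$, $y\ge 1$ lies in $C_1(\mathbb{Z}^+)\cap C_2(\mathbb{Z}^+)\cap C_3(\mathbb{Z}^+)$. Adding $M$ to both sides of $C_1$ gives
\[
M(y^2+1)^2=\sum_{j=0}^{2m}a_jx^j .
\]
So the point $(x,z)$ with $z=y^2+1\ge 2$ lies on a curve $Mz^2=P(x)$ of degree $2m$. This is exactly the shape treated in THEOREM~\ref{thm3.1}, after relabelling the coefficients in descending order, with $n=2m$ and $\alpha=k$. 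The hypothesis $Mk^2=-a_0+a_1-\cdots-a_{2m}$ should be the normalisation $\sum_j(-1)^{n-j+1}a_j=M\alpha^2$ under this relabelling. The first step is therefore to fix the indexing carefully and check that this hypothesis is exactly the statement $P(-1)=-Mk^2$. Then, as in the proof of THEOREM~\ref{thm3.1}, I will write $P(x)=(x+1)f(x)-Mk^2$, so that the point would give
\[
M(z^2+k^2)=(x+1)f(x).
\]

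Next, I will use $C_2$ and $C_3$ in place of the two coefficient congruences ($4k-1$ and $4U-1$) assumed in THEOREM~\ref{thm2.3}. Here they are available only at the given point, but that is all the argument needs. Evaluated at $(x,y)$, $C_2$ gives
\[
\sum_{j\le 2m-1}a_jx^j=4Dy^p-1\equiv -1\pmod 4 .
\]
Likewise $C_3$ gives $\sum_{j\le 2m-3}\epsilon_ja_jx^j-a_{2m-1}x^{2m-1}\equiv 1\pmod 4$. These are meant to feed the $4m$ and $4m+1$ cases of THEOREM~\ref{thm2.3}. I will split on $x \bmod 4$.
\begin{itemize}
\item If $x\equiv 3\pmod 4$, then $4\mid x+1$, while $z^2+k^2$ is odd or $\equiv 2\pmod 4$ because $k$ is odd. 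Since $M$ is odd, this is a contradiction, as in LEMMA~\ref{lem2.1}.
\item If $x\equiv 2\pmod 4$, then $x+1\equiv 3\pmod 4$. By THEOREM~\ref{thm2.1} it has a prime factor $q\equiv 3\pmod 4$. Then $q\nmid M$, so $q\mid z^2+k^2$. This is impossible, because $\gcd(z,k)$ only involves primes $\equiv 1\pmod 4$, so THEOREM~\ref{thm2.2} applies.
\item If $x\equiv 0$ or $1\pmod 4$, I will reduce $f(x)$ modulo $4$ using the expansions (2.7) and (2.16). I aim to show that $C_2$, respectively $C_3$, at the point forces $f(x)\equiv -1\pmod 4$. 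The same prime argument then gives the contradiction.
\end{itemize}

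I expect the last case to be the main obstacle. In THEOREM~\ref{thm2.3} the residue of $f(x)$ modulo $4$ comes from fixed coefficient sums. Here I must show that the evaluated polynomial identities from $C_2$ and $C_3$ reproduce exactly those residues, and that the weights $\epsilon_j$ together with the sign on $a_{2m-1}x^{2m-1}$ match (2.5). The difficulty is translating $x\equiv 0,1\pmod 4$ into $x+1\equiv 1,2\pmod 4$ and tracking powers of $x$ versus powers of $x+1$. My first attempt will be to reduce the relation
\[
(x+1)f(x)=P(x)+Mk^2
\]
modulo $4$ directly, using $x\equiv 0$ (resp.\ $1$), rather than expanding $f$ binomially. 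If the residues do not line up as stated, I will adjust which of $C_2$, $C_3$ governs which residue class. Once every residue class of $x$ is excluded, the intersection is empty.
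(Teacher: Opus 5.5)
\textbf{There is a gap, though your route is viable.} Your route is not the paper's. The paper never argues at a fixed point. It substitutes $a_j=b_jx^j$ into THEOREM~\ref{thm3.2}, so the curve collapses to $My^2=(\sum_j b_j)x^{2m}$ with the visible point $(x,kx^m)$. It concludes that one of the three coefficient identities must fail, and then re-reads those identities as $C_1,C_2,C_3$ by setting $k=Y^2+1$, $K=DY^p$, $U=CY^q$.

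Your pointwise argument is different. You write $M(z^2+k^2)=(x+1)f(x)$ with $z=y^2+1$, and use $C_2,C_3$ only as congruences at the given $x$. This avoids treating $x$-dependent quantities as coefficients and $k$ as a variable. Your cases $x\equiv 2,3 \pmod 4$ are fine, given $k$ odd (see below). The gap is the case you defer, and the plan you sketch for it partly fails.

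\emph{The case $x\equiv 0$.} Here $C_2$ only gives $a_0\equiv -1$. So $f(x)\equiv P(x)-P(-1)\equiv a_0+Mk^2\equiv 0 \pmod 4$, not $-1$. The contradiction is then $4\mid M(z^2+k^2)$, not a prime $q\equiv 3$. More simply, $C_3$ forces $S=\sum_{j\le 2m-3}\epsilon_ja_jx^j-a_{2m-1}x^{2m-1}$ to equal $4Cy^q+1$, which is odd. But $S$ is even for every even $x$, since $\epsilon_0=2m(2m-2)$ and every other term carries a factor $x$. So $C_3$ alone excludes all even $x$.

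\emph{The case $x\equiv 1$.} Reducing $(x+1)f(x)=P(x)+Mk^2$ modulo $4$ cannot work, because $x+1\equiv 2\pmod 4$ and you only learn $f(x)$ modulo $2$. You must expand around $-1$, as in (2.5)--(2.7). Writing $x+1=2u$ with $u$ odd,
\[
f(x)=\sum_j a_j\sum_{i\ge 1}\binom{j}{i}(x+1)^{i-1}(-1)^{j-i}\equiv\sum_j(-1)^j j(j-2)\,a_j\equiv\sum_{j\ \mathrm{odd}}a_j \pmod 4 .
\]
On the other side, $\epsilon_j=(2m-j-1)^2-1$ is $\equiv 0\pmod 8$ for even $j$ and $\equiv -1\pmod 4$ for odd $j$. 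This is the same weight as the isolated term $-a_{2m-1}X^{2m-1}$. So $C_3$ at the point says $\sum_{j\ \mathrm{odd}}a_jx^j\equiv -1$, i.e.\ $\sum_{j\ \mathrm{odd}}a_j\equiv -1\pmod 4$ when $x\equiv 1$. Hence $f(x)\equiv -1\pmod 4$ and $f(x)>0$, so your prime argument finishes. $C_2$ is never needed.

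\textbf{The hypothesis on $k$.} You use that $k$ is odd in the case $x\equiv 3$, but the statement only assumes $k\neq 0$. The paper's proof silently imports the hypothesis of THEOREM~\ref{thm3.2} on $k$. You must state it explicitly, because it cannot be dropped. Under your reading $P(-1)=-Mk^2$, take $m=2$, $M=1$, $k=2$, and $(a_0,\dots,a_4)=(-353,-348,1,1,1)$. Then $-P(-1)=4$ and $P(7)=4$, and $(x,y)=(7,1)$ lies on $C_1$, on $C_2$ with $D=-599$, and on $C_3$ with $C=-2619$. By contrast, the condition on the prime divisors of $k$ is superfluous. Indeed $z=y^2+1$ has no prime factor $\equiv 3\pmod 4$, hence neither does $z^2+k^2$.
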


\begin{proof}
According to THEOREM~\ref{thm3.2}, let $C: My^2=a_0x^n+a_1x^{n-1}+\cdots+a_{n-1}x+a_n$ be an algebraic curve with coefficients $a_0,a_1,a_2,\dots,a_n\in\mathbb{Z}$, where $M,k$ are odd constants, such that all the prime divisors of $M,K$ satisfy $p\equiv 1\pmod 4$, and $C(\mathbb{Z}^+)$ are the positive integer solutions for curve $C$. Then, if
\[
\sum_{j=0}^{n}(-1)^{n-j+1}a_j = Mk^2, \qquad \sum_{j=0}^{n-1}a_j(-1)^{n-j+1} = 4K-1,
\]
\[
\sum_{j=0}^{n-3}(-1)^{n-j-2}(n-j)(n-j-2)a_j+a_{n-1} = 4U-1, \qquad U,K\ge0\in\mathbb{Z},
\]
if $y\ge1$, $x>1$, $\forall(x,y)\in C(\mathbb{Z}^+)$ holds then $C(\mathbb{Z}^+)=\emptyset$.

Now let the coefficients of the curve take the values $a_0=b_0x^0$ where $b_0,x^0\in\mathbb{Z}$, and $a_1=b_1x^1$ and $a_2=b_2x^2$ where $b_1,b_2,x^1,x^2\in\mathbb{Z}$, and similarly for all coefficients $a_j=b_jx^j\in\mathbb{Z}$. As a consequence, we obtain the following new equations:
\[
\sum_{j=0}^{n}(-1)^{n-j+1}b_jx^j = Mk^2, \qquad \sum_{j=0}^{n-1}(-1)^{n-j+1}b_jx^j = 4K-1,
\]
\[
\sum_{j=0}^{n-3}(-1)^{n-j-2}(n-j)(n-j-2)b_jx^j+b_{n-1}x^{n-1} = 4U-1.
\]
And then
\[
C: My^2 = b_0x^0x^n + b_1xx^{n-1} + b_2x^2x^{n-2}+\cdots+a_{n-1}x^{n-1}x+a_nx^n.
\]
Then
\[
C: My^2 = b_0x^n+b_1x^n+b_2x^n+\cdots+a_{n-1}x^n+a_nx^n.
\]
Now let $n=2m$ where $\deg C=2m$. Furthermore, let $(x,kx^m)\in C(\mathbb{Z}^+)$, where $x>1$, $k\ne0$. Substituting these into the equation of the curve $C$ yields
\[
C: M(kx^m)^2 = b_0x^{2m}+b_1x^{2m}+b_2x^{2m}+\cdots+a_{n-1}x^{2m}+a_nx^{2m}.
\]
Dividing both sides by $x^{2m}$ yields
\[
C: M(k)^2 = b_0+b_1+b_2+\cdots+b_{n-1}+b_n.
\]
Therefore, this implies that there exists a solution to the curve $C$ with $(x,kx^m)\in C(\mathbb{Z}^+)$ where $x>1$, $k\ne0$. Hence $C(\mathbb{Z}^+)\ne\emptyset$, which contradicts THEOREM~\ref{thm3.2}. This is a contradiction. Consequently, we conclude that the above equations are not valid:
\[
\sum_{j=0}^{2m}(-1)^{2m-j+1}b_jx^j \ne Mk^2
\]
or
\[
\sum_{j=0}^{2m-1}(-1)^{2m-j+1}b_jx^j \ne 4K-1
\]
or
\[
\sum_{j=0}^{2m-3}(-1)^{2m-j-2}(2m-j)(2m-j-2)b_jx^j+b_{2m-1}x^{2m-1} \ne 4U-1.
\]
This implies that at least one of the three equations must have no solution; otherwise there would exist $(x,kx^m)\in C(\mathbb{Z}^+)$ where $x>1$, $k\ne0$, which yields a contradiction. Note that $k,U\in\mathbb{Z}$. Now assume $K=Dy^p$ and $U=Cy^q$, and let $K=Y^2+1$ and $\epsilon_j=(2m-j)(2m-j-2)$. Then
\[
C_1: MY^4+2MY^2 = \sum_{j=0}^{2m}(-1)^{2m-j+1}a_jX^j - M,
\]
\[
C_2: 4DY^p = \sum_{j=0}^{2m-1}(-1)^{2m-j+1}a_jX^j + 1,
\]
\[
C_3: 4CY^q = \sum_{j=0}^{2m-3}(-1)^{2m-j-2}\epsilon_ja_jX^j - a_{2m-1}X^{2m-1} - 1.
\]
By contradiction, we deduce that at least one of the three equations has no solution. Now let $C_1(\mathbb{Z}^+),C_2(\mathbb{Z}^+),C_3(\mathbb{Z}^+)$ denote the sets of integral points on the three curves. Then
\[
C_1(\mathbb{Z}^+)\cap C_2(\mathbb{Z}^+)\cap C_3(\mathbb{Z}^+) = \emptyset.
\]
Now let $b_j=(-1)^{n-j+1}a_j$, so we have $(-1)^{2m-j+1}b_j = (-1)^{2m-j+1}(-1)^{2m-j+1}a_j = a_j$. Then
\[
C_1: MY^4+2MY^2 = \sum_{j=0}^{2m}a_jX^j - M,
\]
\[
C_2: 4DY^p = \sum_{j=0}^{2m-1}a_jX^j + 1,
\]
\[
C_3: 4CY^q = \sum_{j=0}^{2m-3}\epsilon_ja_jX^j - a_{n-1}X^{2m-1} - 1.
\]
Then
\[
M(y)^2 = -a_0+a_1-a_2-\cdots-a_{2m}-a_{2m}. \qedhere
\]
\end{proof}

\begin{theorem}\label{thm7.2}
Let $C_1,C_2,C_3$ be three algebraic curves over $\mathbb{Z}$ with common coefficients $b_1,b_2,b_3,\dots,b_{2m}\in\mathbb{Z}$. Let $\deg C_1=2m$ and $\deg C_2=2m-1$ and $\deg C_3=2m-1$, and $C_1(\mathbb{Z}^+),C_2(\mathbb{Z}^+),C_3(\mathbb{Z}^+)$ are positive integer solutions to $C_1,C_2,C_3$ respectively. Then, if
\[
S_M: My^2 = -b_0x_0+b_1x_1-b_2x_2-\cdots-b_{2m-1}x_{2m-1}-b_{2m}x_{2m}.
\]
If $S_M=\{P_1,P_2,\dots,P_N\}\subset\mathbb{Z}^{2m}$ are solutions of the linear equation $S_M$, and if $y\ge1$, $x>1$, $\forall(x,y)\in G_1(\mathbb{Z}^+),C_2(\mathbb{Z}^+),F_3(\mathbb{Z}^+)$ and $k\ne0\in\mathbb{Z}$, where $M\ge1$ is an odd positive integer, all prime divisors of $M$ of the form $p=4n+1$, then
\[
G_{P_1}(\mathbb{Z}^+)\cap C_{P_1}(\mathbb{Z}^+)\cap F_{P_1}(\mathbb{Z}^+) = \emptyset.
\]
If $P_1=(x_1,x_2,\dots,x_{2m})\in S_M$, then
\[
G_{P_1}: MY^4+2MY^2 = \sum_{j=0}^{2m}b_jx_jX^j - M,
\]
\[
C_{P_1}: 4DY^p = \sum_{j=0}^{2m-1}b_jx_jX^j + 1,
\]
\[
F_{P_1}: 4CY^q = \sum_{j=0}^{2m-3}\epsilon_jb_jx_jX^j - b_{2m-1}x_{2m-1}X^{2m-1} - 1,
\]
and $q,p\ge0\in\mathbb{N}$ where $C,D\ge1\in\mathbb{N}$ and $\epsilon_j=(2m-j)(2m-j-2)$. If there exist infinitely many solutions in the case, denoted by $N\Rightarrow\infty$, then
\[
\Big\{G_{P_j}(\mathbb{Z}^+)\cap C_{P_j}(\mathbb{Z}^+)\cap F_{P_j}(\mathbb{Z}^+)\Big\}^{\infty}_{j=1} = \emptyset \quad \text{if } N\Rightarrow\infty.
\]
\end{theorem}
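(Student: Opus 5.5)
The plan is to deduce THEOREM~\ref{thm7.2} from THEOREM~\ref{thm7.1} by a substitution of coefficients, in the same way THEOREM~\ref{thm4.1} was obtained from THEOREM~\ref{thm3.2}. Fix a solution $P_i=(x_0,x_1,\dots,x_{2m})$, together with its value $y$, of the linear equation $S_M$. Then define integer coefficients
\[
a_j := b_j x_j, \qquad j=0,1,\dots,2m .
\]
With this choice the equation $S_M$, evaluated at $P_i$, reads
\[
M y^2 = -a_0+a_1-a_2-\cdots-a_{2m-1}-a_{2m}.
\]
Setting $k:=y$ (so $k\neq 0$), this is exactly the hypothesis $Mk^2=-a_0+a_1-a_2-\cdots-a_{2m}$ of THEOREM~\ref{thm7.1}. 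The remaining hypotheses of THEOREM~\ref{thm7.1} carry over unchanged: $M$ is odd with all prime divisors $\equiv 1 \pmod 4$, $C,D\ge 1$, $p,q\ge 0$, and the same $\epsilon_j=(2m-j)(2m-j-2)$.

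Next I substitute $a_j=b_jx_j$ into the three curves $C_1,C_2,C_3$ of THEOREM~\ref{thm7.1}. Term by term this produces exactly $G_{P_i}$, $C_{P_i}$ and $F_{P_i}$ as displayed in the statement: the degree-$2m$ quartic-in-$Y$ curve, the degree-$(2m-1)$ curve with left side $4DY^p$, and the degree-$(2m-1)$ curve with left side $4CY^q$. THEOREM~\ref{thm7.1} then yields
\[
G_{P_i}(\mathbb{Z}^+)\cap C_{P_i}(\mathbb{Z}^+)\cap F_{P_i}(\mathbb{Z}^+)=\emptyset
\]
under the standing restriction $x>1$, $y\ge 1$. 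Since this holds for an arbitrary $P_i\in S_M$, it holds for each of $P_1,\dots,P_N$, giving the indexed family statement. If $S_M$ has infinitely many integer solutions, each one produces a new triple of curves by the same argument, which gives the case $N\Rightarrow\infty$.

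The step I expect to be the main obstacle is checking that THEOREM~\ref{thm7.1} really applies to the substituted coefficients, and not only formally. The proof of THEOREM~\ref{thm7.1} ultimately rests on THEOREM~\ref{thm3.2}, which requires all three conditions at once: the $Mk^2$ relation, the ``$4K-1$'' alternating-sum relation, and the ``$4U-1$'' weighted relation. The disjointness conclusion arises because, at a would-be common point, the values $K=Dy^p$ and $U=Cy^q$ read off from $C_{P_i}$ and $F_{P_i}$ would force both congruence conditions, and the relation from $G_{P_i}$ would force the $Mk^2$ condition. These together would contradict THEOREM~\ref{thm3.2}. So I will first re-derive, for the substituted coefficients $b_jx_j$, the correspondence between a common positive integral point of the three curves and simultaneous validity of the three conditions of THEOREM~\ref{thm3.2}. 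In doing so I will keep track of the signs $(-1)^{2m-j+1}$, which THEOREM~\ref{thm7.1} absorbs by the renaming $b_j\mapsto(-1)^{n-j+1}a_j$. The point to watch is that this renaming commutes with the substitution $a_j=b_jx_j$; if it does not, I will instead redo the final renaming step of THEOREM~\ref{thm7.1} directly with $b_jx_j$ in place of $a_j$.
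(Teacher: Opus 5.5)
Your proposal is correct and is essentially the paper's own proof. The paper sets $a_j=b_jx_j$ in THEOREM~\ref{thm7.1}, so that $S_M$ at a solution becomes the hypothesis $Mk^2=-a_0+a_1-\cdots-a_{2m}$ and the three curves become $G_{P_i},C_{P_i},F_{P_i}$; it then repeats this for each solution $P_i$, including the case $N\Rightarrow\infty$. The extra re-derivation you anticipate in your last paragraph is not needed: THEOREM~\ref{thm7.1} is stated for arbitrary integer coefficients, so it applies verbatim to $a_j=b_jx_j$, and the renaming $b_j=(-1)^{n-j+1}a_j$ inside its proof cannot interfere.
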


\begin{proof}
Let, in THEOREM~\ref{thm7.1}, $a_j=b_jx_j$ where $b_j,x_j\in\mathbb{Z}$. Then
\[
My^2 = -b_0x_0+b_1x_1-b_2x_2-\cdots-b_{2m-1}x_{2m-1}-b_{2m}x_{2m}.
\]
Thus we obtain new equations equal to the following, according to THEOREM~\ref{thm7.1}. Note that
\[
G_{P_1}: MY^4+2MY^2 = \sum_{j=0}^{2m}b_jx_jX^j - M,
\]
\[
C_{P_1}: 4DY^p = \sum_{j=0}^{2m-1}b_jx_jX^j + 1,
\]
\[
F_{P_1}: 4CY^q = \sum_{j=0}^{2m-3}\epsilon_jb_jx_jX^j - b_{2m-1}x_{2m-1}X^{2m-1} - 1.
\]
And if $y\ge1$, $x>1$ and $\forall(x,y)\in G_{P_1}(\mathbb{Z}^+),C_{P_1}(\mathbb{Z}^+),F_{P_3}(\mathbb{Z}^+)$ and $k\ne0\in\mathbb{Z}$, where $M$ is an odd positive integer, all prime divisors of $M$ of the form $p=4n+1$, then
\[
G_{P_1}(\mathbb{Z}^+)\cap C_{P_1}(\mathbb{Z}^+)\cap F_{P_1}(\mathbb{Z}^+) = \emptyset.
\]
Now suppose that $S_M=\{P_1,P_2,\dots,P_N\}\subset\mathbb{Z}^{2m}$ is a solution of the equation
\[
My^2 = -b_0x_0+b_1x_1-b_2x_2-\cdots-b_{2m-1}x_{2m-1}-b_{2m}x_{2m}.
\]
Now if $P_2=(x_0,x_1,\dots,x_m)\in S_M$ then, by THEOREM~\ref{thm7.1}, we have
\[
G_{P_2}(\mathbb{Z}^+)\cap C_{P_2}(\mathbb{Z}^+)\cap F_{P_2}(\mathbb{Z}^+) = \emptyset.
\]
Suppose there exist infinitely many solutions, where $N\Rightarrow\infty$, so
\[
\Big\{G_{P_j}(\mathbb{Z}^+)\cap C_{P_j}(\mathbb{Z}^+)\cap F_{P_j}(\mathbb{Z}^+)\Big\}^{\infty}_{j=1} = \emptyset \quad \text{if } N\Rightarrow\infty. \qedhere
\]
\end{proof}

\end{document}